\numberwithin{equation}{section}
\newtheorem{thm}{Theorem}[section]
\newtheorem{lem}[thm]{Lemma}
\newtheorem{prop}[thm]{Proposition}
\newtheorem{cor}[thm]{Corollary}
\theoremstyle{definition}
\newtheorem{rem}[thm]{Remark}
\newcommand\R{{\mathbb R}}
\newcommand\C{{\mathbb C}}
\newcommand\N{\mathbb{N}}
\newcommand\Tma{T_{\mathrm{max}}}
\newcommand\Step[1]{\par\medskip\noindent {\sc Step~#1.}\quad}
\newcommand\Spa{\mathcal{X}}
\newcommand\goto{\mathop{\longrightarrow}}
\newcommand\Jdtx{H}
\newcommand\CSTu{\boldsymbol {A}} 
\newcommand\CSTd{\boldsymbol {D}} 
\newcommand\CSTt{\boldsymbol {B}} 
\newcommand\CSTq{\boldsymbol {E}} 
\newcommand\CSTc{\boldsymbol {F}} 
\newcommand\CSTs{\boldsymbol {G}} 
\newcommand\Term{\boldsymbol {T}} 
\newcommand\MDb{\boldsymbol {M}} 
\newcommand\CSTnv{\Lambda }
\newcommand\CSTnw{ \widetilde{K}}
\definecolor{bostonuniversityred}{rgb}{0.8, 0.0, 0.0}
\newcommand\MScN[1]{\href{http://www.ams.org/mathscinet-getitem?mr=#1}{\nolinkurl{(#1)}}}
\newcommand\DOI[1]{\href{http://dx.doi.org/#1}{(doi: \nolinkurl{#1})}}
\newcommand\LINK[1]{\href{#1}{(link: \nolinkurl{#1})}}
\begin{document}

\title[dissipative nonlinear Schr\"odinger equation]{Asymptotic behavior for a dissipative nonlinear Schr\"odinger equation}

\author[T. Cazenave]{Thierry Cazenave$^1$}
\email{\href{mailto:thierry.cazenave@sorbonne-universite.fr}{thierry.cazenave@sorbonne-universite.fr}}

\author[Z. Han]{Zheng Han$^{2, \dag} $}
\email{\href{mailto:hanzh_0102@hznu.edu.cn}{hanzh\_0102@hznu.edu.cn}}

\author[I. Naumkin]{Ivan Naumkin$^3$}
\email{\href{mailto:ivan.naumkin@iimas.unam.mx}{ivan.naumkin@iimas.unam.mx}}

\address{$^1$Sorbonne Universit\'e \& CNRS, Laboratoire Jacques-Louis Lions,
B.C. 187, 4 place Jussieu, 75252 Paris Cedex 05, France}

\address{$^2$Department of Mathematics, Hangzhou Normal University, Hangzhou, 311121, China}

\address{$^3$Departamento de F\'{\i}sica Matem\'{a}tica, Instituto de Investigaciones
en Matem\'{a}ticas Aplicadas y en Sistemas. Universidad Nacional Aut\'{o}noma
de M\'{e}xico, Apartado Postal 20-126, Ciudad de M\'{e}xico, 01000, M\'{e}xico.}

\thanks{$^\dag$ Corresponding author}

\thanks{Zheng Han thanks NSFC 11671353,11401153, Zhejiang Provincial Natural Science Foundation of China under Grant No. LY18A010025, and CSC for their financial support}

\thanks{Ivan Naumkin is a Fellow of Sistema Nacional de Investigadores. He was partially supported by project PAPIIT IA101820}

\subjclass[2020] {Primary 35Q55; secondary 35B40}

\keywords{Nonlinear Schr\"odinger equation; dissipative nonlinearity; Asymptotic behavior}

\begin{abstract}
We consider the Schr\"odinger equation with nonlinear dissipation
\begin{equation*} 
i \partial _t u +\Delta u=\lambda|u|^{\alpha}u
\end{equation*} 
in ${\mathbb R}^N $, $N\geq1$, where $\lambda\in {\mathbb C} $ with $\Im\lambda<0$. Assuming $\frac {2} {N+2}<\alpha<\frac2N$, we give a precise description of the long-time behavior of the solutions (including decay rates in $L^2$ and $L^\infty $, and asymptotic profile), for a class of arbitrarily large initial data.
\end{abstract}

\maketitle

\section{Introduction} \label{sIntro} 
In this paper, we consider the following nonlinear Schr\"{o}dinger equation
\begin{equation}  \tag{NLS}\label{NLS-0}
\begin{cases} 
i \partial _t u +\Delta u=\lambda|u|^{\alpha}u,\\
u(0,x)=u_{0},
\end{cases} 
\end{equation} 
where $\lambda\in \C $ with 
\begin{equation} \label{Imlamdba}
\Im \lambda <0,
\end{equation}
and $0<\alpha<\frac{2}{N}$. 
This equation~is a particular case of the
complex Ginzburg-Landau equation on $\R^N$
\begin{equation*}
 \partial _t u =e^{i\theta }\Delta u+ \zeta |u|^{\alpha }u,
\end{equation*}
where $ | \theta | \leq \frac{\pi }{2}$ and $\zeta \in \C$,
 which in turn is a generic modulation equation that decribes the nonlinear 
 evolution of patterns at near-critical conditions. See for instance~\cite{SteStu,CrHo, Mielke}.

If $\alpha <\frac {4} {N}$, then equation~\eqref{NLS-0} is mass-subcritical, hence under assumption~\eqref{Imlamdba} the associated  initial value problem is globally well-posed in $L^{2}({\mathbb{R}}^{N})$ and in $H^{1}({\mathbb{R}}^{N})$. See e.g.~\cite[Proposition~2.1]{CH}. 

For the large-time behavior of the solutions, the exponent
$\alpha =\frac{2}{N}$ is critical. More precisely, if $\alpha > 
\frac{2}{N}$, $\lambda \in {\mathbb{C}}$, then for a large set of initial
values the corresponding solutions scatter as $t\rightarrow \infty $, i.e. they behave like the solutions of the free Schr\"o\-din\-ger  equation. See~\cite{Strauss2, GV2,GV1,CW,GOV, NaOz, CCDW, CN1}. On the other hand, if $\alpha \leq \frac{2}{N}$ and~\eqref{Imlamdba} holds then, due to the dissipative nature of the nonlinear term, the solutions of equation~\eqref{NLS-0} often decay faster compared
to the solutions of the free Schr\"o\-din\-ger  equation. In particular, in
the case $\alpha =\frac{2}{N}$,  a large class of solutions of \eqref{NLS-0} have the decay rate $(t\log t)^{-\frac{N}{2}}$ as $t \to \infty $, see~\cite{Shi, KitaSh1, KitaSh, CN}. It is worth noting that, as proved in \cite{CN},  the limit
\begin{equation*}
\lim  _{ t\to \infty  }(t\log t)^{\frac{N}{2}}\Vert u(t)\Vert _{L^{\infty }} = (\alpha |\Im \lambda |)^{-\frac{N}{2}}
\end{equation*}
exists and is independent of the initial value (for a certain class of solutions). 

In the case $\alpha <$ $\frac{2}{N},$ it is possible (still assuming~\eqref{Imlamdba}) to derive
strong a priori estimates for the solutions to \eqref{NLS-0} under some further
``dissipative" condition on $\lambda $.  These estimates are then used  to
describe the large-time behavior of the
solutions to \eqref{NLS-0} for $\alpha <$ $\frac{2}{N}$ sufficiently close
to the critical power $\frac{2}{N}$. More precisely, in the one-dimensional case $N=1
$, if $\alpha <2$ is sufficiently close to $2$ and under the dissipative condition
\begin{equation} \label{res-lam}
\frac{\alpha }{2\sqrt{\alpha +1}}|\Re \lambda |\leq|\Im \lambda |,
\end{equation}
the large-time asymptotic behavior of solutions is described in \cite{KitaSh} for {\it any} initial value in $ H^1 (\R ) \cap L^{2}( \R,|x|^{2}dx)$. In particular, the solutions satisfy
\begin{equation} \label{uppbd-u}
\Vert u(t)\Vert _{L^{\infty }}\lesssim t^{-\frac{1}{\alpha }}.
\end{equation}
In addition, it is proved in \cite{HN1} that for any space dimension $N\geq 1
$, under the dissipative condition~\eqref{res-lam} and for $\alpha <\frac{2}{ N}$ sufficiently close to $\frac{2}{N}$, 
{\it all} solutions with initial value in $ H^1 (\R^N ) \cap L^{2}( \R^N ,|x|^{2}dx)$ satisfy the $L^{2}$-decay estimate 
\begin{equation*}
\Vert u(t)\Vert _{L^{2}}\lesssim t^{-(\frac{1}{\alpha }-\frac{N}{2})q},
\end{equation*}
for all $q<\frac{2}{N+2}$, $q\leq \frac{1}{2}$. If there is no ``dissipative" condition on $\lambda$, it seems problematic  to derive a priori estimates for all solutions. Therefore, in order to study the large-time behavior of the solutions, further assumptions on the initial data are required. This is achieved in \cite{KitaSh1,HN2}, where the dissipative condition~\eqref{res-lam} is removed and the time decay estimates \eqref{uppbd-u}, as well as lower estimates are established, and the large-time
asymptotic behavior  of the solutions is described, in the case of space dimensions $ N=1,2,3$, for all $\alpha <\frac{2}{N}$ sufficiently close to $\frac{2}{N}$, but only for all sufficiently small initial data in a certain space. 
In the case of any space dimension $N\geq 1$, the large-time asymptotic behavior of solutions is studied in~\cite{CH}, for $\alpha <\frac{2}{N}$ sufficiently close to $\frac{ 2}{N}$ and for a class of arbitrarily large nonvanishing initial values.  (A nonvanishing condition appears due to the lack of regularity of the nonlinear term, see~\cite{CN}.) 
Note that it is proved in~\cite{CH} that, for the solutions studied there, the limit 
\begin{equation*}
\lim  _{ t\to \infty  } t^{\frac{1}{\alpha }}\Vert u(t)\Vert _{L^{\infty }} = \Bigl( \frac  {2\alpha| \Im \lambda|} {2-N\alpha} \Bigr)^{ -\frac{1}{\alpha }} 
\end{equation*}
exists and is independent of the initial value, similarly to what happens in the case $\alpha = \frac {2} {N}$. 
Again, since \eqref{res-lam} is also {\it not} used in \cite{CH}, the results are obtained only for a class of initial data (not necessarily small).

The aim of the present work is to complete the previous results on the
time-asymptotic behavior of the solutions obtained in \cite%
{CH,HN1,HN2,KitaSh}. More precisely, we do not impose any condition on $%
\lambda $ other than~\eqref{Imlamdba}, and we improve the lower conditions
on $\alpha $, replacing certain relative smallness assumptions by the
explicit condition
\begin{equation}
\frac{2}{N+2}<\alpha <\frac{2}{N}.  \label{Conditionalpha}
\end{equation}
Before stating our results, we introduce the function spaces we will use. We fix three integers $k,m,n$ sufficiently large so that
\begin{gather} 
k > \frac {N} {2} +4 , \label{fCondonk}   \\
n > \max \Bigl\{ \frac {20} {\alpha ^2},  \frac {N(2-N\alpha ) (k+4)} {\alpha },    \frac {2 N (k+2) (2-N\alpha )} {(N+2) \alpha - 2}  \Bigr\},  \label{fCondonn:1} \\
m > \max \Bigl\{ \frac { k + n +1} {2},  \frac {5 n \alpha  |\lambda | (1 + \alpha  | \Im \lambda |) } { N(2- N \alpha ) |\Im \lambda |    } \Bigr\} ,\label{fCondonm}
\end{gather} 
and we let
\begin{equation}  \label{def-J}
J=2m+2+k+n .
\end{equation}
Moreover, we define the nonincreasing function $\MDb : \{ 0, \cdots ,J\}  \to \{ 0, \cdots , n\} $ by
\begin{equation} \label{fDfnMdb} 
\MDb ( p ) = 
\begin{cases} 
n & 0 \le p \le J - n , \\
J- p  & J- n \le p \le J .
\end{cases} 
\end{equation} 
We introduce the Banach space $ \Spa $ defined in~\cite{CN, CN1}
\begin{equation} \label{n27}
\begin{split}
 \Spa =\{u\in H^{{J}}( \R^N );  &  \,\langle x\rangle^{{n}%
}D^{\beta}u\in L^{\infty}( \R^N )\text{ for }0\leq|\beta|\leq2{m} , \\
\langle x\rangle^{ \MDb (  | \beta | ) } D^{\beta}u  &  \in L^{2}( \R^N )\text{
for } 2m+ 1 \leq |\beta|\leq J\}
\end{split}
\end{equation}
and we equip $ \Spa $ with the norm
\begin{equation*} 
\Vert u\Vert_ \Spa =  \sup_{0\leq  |\beta | \leq2m} \Vert\langle \cdot \rangle^{n}D^{\beta}u\Vert_{L^{\infty}} + 
\sup_{2m+1\leq  |\beta | \leq J } \Vert\langle \cdot \rangle^{ \MDb (  | \beta | ) }D^{\beta}
u\Vert_{L^{2}} 
\end{equation*} 
where
\begin{equation*} 
\langle x\rangle=(1+|x|^{2})^{\frac{1}{2}}.
\end{equation*} 
As observed above, under assumptions~\eqref{Imlamdba} and~\eqref{Conditionalpha}, the initial value problem
associated with equation~\eqref{NLS-0} is globally well-posed in $H^{1}({\mathbb{R}}^{N})$.
Our main result is the following.

\begin{thm} \label{T1}
Let $\lambda\in \C$ with $\Im\lambda<0$, and let $\alpha $ satisfy~\eqref{Conditionalpha}.
Assume~\eqref{fCondonk}--\eqref{fCondonm} and let $ \Spa $ be defined
by~\eqref{n27}. 
Let $v_{0}  \in \Spa $ satisfy
\begin{equation} \label{boundbelow}
\inf\limits_{x\in \R^N }\langle x\rangle^{n}|v_{0} (x)|>0 .
\end{equation}
Given $b\in \R$, let $u_0 \in H^1 (\R^N ) $ be given by $u_{0}  (x)=e^{i\frac{b|x|^{2}}{4}}v_{0}(x) $ and let $u\in C([0,\infty ), H^1 (\R^N ) )$ be the corresponding solution of~\eqref{NLS-0}. 
If $b$ is sufficiently large, then $u\in  L^{\infty}((0,\infty)\times
 \R^N )\cap L^{\infty}((0,\infty),H^{1}( \R^N ))$, and 
there exist $C, \delta>0$, and $f_{0},\omega_{0}\in L^{\infty} (\R^N ) \cap C(\R^N ) $ with $\Vert
f_{0}\Vert_{L^{\infty}}\leq\frac{1}{2}$ and $\langle \cdot \rangle^{n}\omega_{0}\in
L^{\infty}( \R^N )$ such that
\begin{equation} \label{dis81}
t^{ \frac{1}{\alpha}-\frac{N}{2} }  \Vert u(t, \cdot )-z(t, \cdot ) \Vert _{L^{2}}+ t^{\frac{1}{\alpha }} \Vert
u(t, \cdot )-z(t, \cdot ) \Vert _{L^{\infty}}\leq C t^{- \delta}
\end{equation}
for $t\ge 1$, where
\begin{equation*} 
z(t,x)=(1+bt)^{-\frac{N}{2}}e^{i\Theta(t,x)}\Psi \Bigl(  t,\frac{x} {1+bt} \Bigr)  \omega_{0} \Bigl(  \frac{x}{1+bt} \Bigr)
\end{equation*} 
with
\begin{equation*} 
\Theta(t,x)=\frac{b|x|^{2}}{4 (  1+bt )  }-\frac {\Re\lambda} {\Im\lambda} \log\Psi \Bigl(  t,\frac{x}{1+bt}\Bigr)
\end{equation*} 
and
\begin{equation*} 
\Psi(t,y)=\Bigl(  \frac{1+f_{0}(y)}{1+f_{0}(y)+\frac{2 \alpha| \Im \lambda
|}{b (  2-N\alpha )  }|v_{0}(y)|^{\alpha}[ (1+bt)^{\frac{2-N\alpha} {2}}-1]} \Bigr)  ^{\frac{1}{\alpha}}.
\end{equation*} 
Moreover,
\begin{equation*} 
 \vert \omega_{0} \vert ^{\alpha}=\frac{ \vert v_{0} \vert ^{\alpha}}{1+f_{0}},
\end{equation*} 
so that $\frac{3}{2} \vert v_{0} \vert ^{\alpha}\leq  \vert \omega_{0} \vert ^{\alpha}\leq2 \vert v_{0} \vert
^{\alpha}$. In addition,
\begin{equation} \label{dis82}
t\Vert u\Vert_{L^{\infty}}^{\alpha} \goto_{t\uparrow \frac{1}{b}} \frac {2-N\alpha}{2\alpha|\Im\lambda|} 
\end{equation}
and 
\begin{equation} \label{dis83}
a \leq (1+bt)^{ (  \frac{1}{\alpha}-\frac{N}{2} ) (  1-\frac{N} {2n} )  } \Vert u (  t )  \Vert _{L^{2}} \leq
A,
\end{equation}
as $t \to \infty $,  for some constants $0<a\leq A<\infty$.
\end{thm}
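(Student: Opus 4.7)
The structure of the conclusion — with the explicit profile $z(t,x)$ involving the chirp factor $e^{ib|x|^2/(4(1+bt))}$, the self-similar variable $y = x/(1+bt)$, and the spatial dilation by $(1+bt)^{-N/2}$ — points directly to a pseudoconformal (lens) transform. My plan is to set $u(t,x) = (1+bt)^{-N/2} e^{ib|x|^2/(4(1+bt))}\,v(s,y)$ with $y=x/(1+bt)$ and $s=t/(1+bt)$, so that the initial chirp $e^{ib|x|^2/4}v_0$ is absorbed and $v(0,y)=v_0(y)$. In these variables the equation becomes a nonautonomous NLS on the finite time interval $s\in[0,1/b)$, with the Laplacian carrying a coefficient proportional to $b^{-2}$ (so that large $b$ makes dispersion a small perturbation) and the nonlinearity acquiring a time weight of the form $(1-bs)^{-(2-N\alpha/2)}$. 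The point of the change of variables is that the full long-time asymptotics of $u$ as $t\to\infty$ is encoded in the behavior of $v(s,\cdot)$ as $s\uparrow 1/b$, a much more tractable finite-time problem.

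Next, I would construct the approximate profile by freezing the Laplacian. Ignoring $\Delta$ in the transformed equation yields a pointwise ODE $i\partial_s v = \lambda(1-bs)^{-(2-N\alpha/2)}|v|^\alpha v$ at each $y$. Writing $v = |v|e^{i\varphi}$ and using $\Im\lambda<0$, the modulus satisfies a Bernoulli ODE for $|v|^{-\alpha}$, which integrates explicitly to give the formula for $\Psi$ in the statement, while $\Re\lambda$ contributes the logarithmic phase $-\frac{\Re\lambda}{\Im\lambda}\log\Psi$ appearing in $\Theta$. The function $\omega_0$ and the correction $f_0$ (with $|\omega_0|^\alpha = |v_0|^\alpha/(1+f_0)$) will be constructed to absorb the leading-order correction coming from the Laplacian acting on the ODE profile, exactly as in the scheme of \cite{CH}; this is what makes the ansatz a legitimate first-order approximate solution rather than a crude guess.

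The core analytic work is then to show that the remainder $R = v - e^{i\varphi}\Psi\omega_0$ is small in the weighted Sobolev space $\Spa$ defined by~\eqref{n27}, and decays polynomially in $(1+bt)$. I would set up a Banach fixed-point argument on Duhamel's formula for $R$, using the dissipative identity obtained from taking the imaginary part of $\int |v|^{\alpha-2}\bar v\cdot (\cdot)$ against the equation (this is where $\Im\lambda<0$ is essential, and where the lower bound~\eqref{boundbelow} is needed so that $|v|^{\alpha-2}$ makes sense pointwise, since $\alpha<1$ is allowed). The nonvanishing propagates because the ODE profile $\Psi|\omega_0|$ cannot vanish in finite $s<1/b$. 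Choosing $b$ large makes the Laplacian contribution small in the $\Spa$-norm of the source term, enabling the contraction. The conditions~\eqref{fCondonk}--\eqref{fCondonm} on $k,m,n$ are dictated precisely by closing: (i) Sobolev embeddings needed to control $|v|^\alpha$ at high regularity (this forces $k > N/2+4$); (ii) enough weights in $\langle x\rangle^n$ to beat the decay losses incurred by differentiating $|v|^\alpha$ low-regularity nonlinearity (this forces $n$ to be comparable to $1/\alpha^2$ and to $k/\alpha$); and (iii) the balance $m\gtrsim n\alpha|\lambda|/(|\Im\lambda|(2-N\alpha))$ which regulates the Gronwall exponent when integrating the dissipative identity against the time weight $(1-bs)^{-(2-N\alpha/2)}$.

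Finally, the explicit estimates~\eqref{dis81}--\eqref{dis83} follow by translating the $\Spa$-control on $R$ back through the lens transform: the factor $(1+bt)^{-N/2}$ produces the $L^\infty$-rate $t^{-1/\alpha}$ and the $L^2$-rate $t^{-(1/\alpha-N/2)}$ in the leading term, so the remainder picks up the extra $t^{-\delta}$; the constant in~\eqref{dis82} is read off directly from the explicit form of $\Psi$ as $(1+bt)^{(2-N\alpha)/2}\to\infty$; and the two-sided bound~\eqref{dis83} comes from combining the pointwise bounds $\tfrac32|v_0|^\alpha\le|\omega_0|^\alpha\le 2|v_0|^\alpha$ with the $\langle y\rangle^n$-decay of $\omega_0$, after accounting for the Jacobian of the change of variables $y=x/(1+bt)$ on the $L^2$-norm. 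The principal obstacle, in my view, is not any single step but closing the nonlinear weighted estimate in $\Spa$ under the \emph{sharp} lower bound $\alpha > 2/(N+2)$: this exponent is exactly the threshold at which the time-weight $(1-bs)^{-(2-N\alpha/2)}$ integrated against the natural Strichartz or energy decay of the remainder becomes borderline integrable, and making this quantitative — rather than merely asserting $\alpha$ close to $2/N$ as in \cite{CH} — is what demands the elaborate hierarchy of exponents in~\eqref{fCondonn:1}--\eqref{fCondonm}.
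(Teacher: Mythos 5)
Your high-level scaffolding (pseudoconformal transform, finite-time analysis on $s\in[0,1/b)$, reading $\Psi$ and $\Theta$ off the pointwise ODE) matches the paper, and your remark locating the threshold $\alpha>\frac{2}{N+2}$ in the integrability of $(1-bs)^{-\frac{2-N\alpha}{2\alpha}}$ is essentially what drives~\eqref{dis86}. However, there are two serious gaps in the analytic core of your plan.

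First, after the transform the equation is exactly \eqref{NLS-1}: $i\partial_t v+\Delta v=\lambda(1-bt)^{-\frac{4-N\alpha}{2}}|v|^{\alpha}v$, with the Laplacian carrying coefficient $1$, not $b^{-2}$. Large $b$ does not make dispersion a perturbation; what it does is shorten the interval $[0,1/b)$, so that time integrals of the form $\int_0^{1/b}(1-bs)^{-1-\nu}ds$ gain a factor $1/b$. That distinction matters because of the second, deeper gap: a Banach fixed-point argument on Duhamel's formula for the remainder does not close here, and the paper says so explicitly. The dissipative decay estimate~\eqref{behaviour}, $|v(s)|\sim b\cdot\frac{2-N\alpha}{2\alpha|\Im\lambda|}(1-bs)^{-\frac{2-N\alpha}{2\alpha}}$, carries a factor of $b$ that exactly cancels the $1/b$ gain from the short time interval, so the contraction constant cannot be made small by taking $b$ large unless $\alpha$ is taken close to $\frac{2}{N}$ (which is the strategy of \cite{CH}, and precisely the restriction the paper removes). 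What replaces the contraction argument is a bootstrap on the full solution $v$ in which the dissipativity $\Im\lambda<0$ is used \emph{inside} the pointwise derivative estimates to kill the highest-order term: in~\eqref{dis85} and~\eqref{fMMre6}, the term $(1-bt)^{-\frac{4-N\alpha}{2}}|v|^{\alpha}|D^{\beta}v|$ drops out because $\Im(\lambda|v|^{\alpha}|D^{\beta}v|^{2})=(\Im\lambda)|v|^{\alpha}|D^{\beta}v|^{2}\le 0$. The two specific new mechanisms that then close the estimates — the extra dissipative decay of the derivatives $D^{\beta}(|v|^{\alpha})$ (Proposition~\ref{lem-Decay-v}, Corollary~\ref{cor-Decay-v}) and the growth hierarchy $|D^{\beta}v|\lesssim(1-bs)^{-|\beta|\sigma}$ built into $\Phi_{1,t},\Phi_{2,t},\Phi_{4,t}$ — are absent from your sketch, and without them one is back to the failed contraction. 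Finally, a small but conceptual point: $f_0$ and $\omega_0$ are not constructed a priori as corrections absorbing the Laplacian against an ODE profile; rather, $f(t,x)$ is defined from the exact representation~\eqref{eq-v-4} of $|v|^{\alpha}$, and $f_0$, $\omega_0$ are its limits as $t\uparrow\frac{1}{b}$, which only exist once the global a priori bounds of Proposition~\ref{BD-GWP} are in hand. The logic is a posteriori: one must first build the global solution, then read off the profile.
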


\begin{rem} \label{eRem2} 
Here are some comments on Theorem~\ref{T1}.
\begin{enumerate}[{\rm (i)}] 

\item \label{eRem2:3} 
Theorem~\ref{T1} is valid in any dimension $N\geq1$, and for any $\lambda\in\C$ with $\Im  \lambda<0$.
The main restrictions are that  the initial value $u_0$ must be sufficiently smooth, bounded from below in the sense~\eqref{boundbelow}, and oscillatory in the sense that $b$ must be sufficiently large. 
On the other hand, there is no restriction on the amplitude of $u_0$.

\item \label{eRem2:1} 
A typical initial value which is admissible in Theorem~\ref{T1} is  $v_0 = c  \langle \cdot \rangle ^{-n} + \varphi $ with $c\in \C$, $c\not = 0$, and $\varphi \in {\mathcal S} (\R^N )$, $ |\varphi | \le ( |c| -\varepsilon )  \langle \cdot \rangle ^{-n} $, $\varepsilon >0$.

\item \label{eRem2:4} 
The limit~\eqref{dis82} gives the exact decay rate of $ \| u(t)\| _{ L^\infty  }$, and this limit is independent of the initial value $u_0$. 
Compare~\cite[Remark~1.2~(iv)]{CH}.  

\item 
Estimate~\eqref{dis83} shows that $ \|u(t)\|_{L^2} $ is equivalent as $t\to \infty $ to $ t^{- (\frac {1} {\alpha } - \frac {N} {2}) ( 1- \frac {N} {2n} )} $. 
In particular, we see that the decay rate of $ \|u(t)\|_{L^2} $ depends on the initial value, through the parameter $n$ which can be chosen (provided it is sufficiently large to satisfy~\eqref{fCondonn:1}).

\item \label{eRem2:5} 
Since
\begin{equation*} 
\liminf _{ t\to \infty  }  t^{\frac 1 \alpha }\|u(t ) \|_{L^\infty} >0 , \quad   t ^{\frac {1} {\alpha } - \frac {N} {2}}\|u( t ) \|_{L^2} \goto _{ t\to \infty  } \infty  ,
\end{equation*} 
by~\eqref{dis82}-\eqref{dis83}, we see  that the asymptotic behavior of $u (t)$ as $t\to \infty $ is described by the asymptotic estimate~\eqref{dis81}.

\item \label{eRem2:6} 
We do not know if the lower condition in~\eqref{Conditionalpha} on the power $\alpha$ is
necessary to derive the asymptotic expansion~\eqref{dis81}. However,
assumption~\eqref{Conditionalpha} plays a crucial role in the proof of Proposition~\ref{BD-GWP} below to control
$ \Vert \frac{\Delta v}{ \vert v \vert } \Vert _{L^{\infty} }$ (see~\eqref{dis86}), which in turn is used in the proof of Proposition~\ref{PropAsym} to prove that  $f(t)$ is convergent to $f_{0}$ in $L^{\infty }( \R^N  )$ as $t\uparrow\frac{1}{b}$.
Condition~\eqref{Conditionalpha} is also essential to establish~\eqref{fCndeta4} in the proof of Proposition~\ref{PropAsym}. 

\end{enumerate} 
\end{rem} 

The general strategy we use to prove Theorem~\ref{T1} is
inspired by \cite{CN, CN1}. In order to obtain our results, we need strong
decay and regularity of the initial data. As the nonlinearity $|u|^{\alpha}u$
can be not smooth enough, we require the nonvanishing condition
\eqref{boundbelow} as well (see \cite{CN} for a discussion on this regularity
issue). This explains the space $ \Spa $ we work with. The other main
ingredient in the strategy of \cite{CN, CN1} is the application of the
pseudo-conformal transformation, which is given by
\begin{equation} \label{Pseudo}
v(t,x)=(1-bt)^{-\frac{N}{2}}u\Bigl(  \frac{t}{1-bt},\frac{x}{1-bt} \Bigr)
e^{-i\frac{b|x|^{2}}{4(1-bt)}}, \quad t\geq0, x\in \R^N ,
\end{equation}
for any $b>0 $.  Using this transformation, we see that the equation
\eqref{NLS-0} is equivalent to the nonautonomous equation
\begin{equation}  \tag{NLS$_b$}\label{NLS-1}
\begin{cases} 
i \partial _t v+\Delta v=\lambda(1-bt)^{-\frac{4-N\alpha}{2}}|v|^{\alpha}v,\\
v(0,x)=v_{0}.
\end{cases} 
\end{equation} 
The last equation reveals the main issue that appears in the case when
$\alpha\leq\frac{2}{N}$: the factor $(1-bt)^{-\frac{4-N\alpha}{2}}$ is not
integrable at $t=1/b $.  In order to deal with this problem, in the critical
case $\alpha=\frac{2}{N}$ considered in \cite{CN1}, the solution $v$ is
estimated by allowing a certain growth of the norms appearing in $ \Spa 
$. Then, by the Duhamel's formula for \eqref{NLS-1}
\begin{equation}
v (  t )  =e^{it\Delta}v_{0}+\lambda\int_{0}^{t}(1-bs)^{-\frac
{4-N\alpha}{2}}e^{i (  t-s )  \Delta} \vert v (  s )
 \vert ^{\alpha}v (  s )  ds \label{Duhamel}%
\end{equation}
and the elementary inequality
\begin{equation*} 
\int_{0}^{t}(1-bs)^{-1-\nu}ds=\frac{1}{b\nu} (  (1-bt)^{-\nu}-1 )
\leq \frac{1}{b\nu}(1-bt)^{-\nu},
\end{equation*} 
if $e^{i (  t-s )  \Delta} \vert v (  s )   \vert ^{\alpha} (  s )  $ is estimated in a certain norm by $(1-bs)^{-\mu
} $,  the solution $v$ is controlled in the same norm by $(1-bs)^{-\mu
-\frac{2-N\alpha}{2}} $.  In the case when $\alpha=\frac{2}{N}$, $v$ is
controlled by the same power $(1-bs)^{-\mu}$ and this is used in \cite{CN1} to
close appropriate estimates. In the case $\alpha<\frac{2}{N} $, there appears an
extra singularity $(1-bs)^{-\frac{2-N\alpha}{2}}$ in the control of
$e^{i (  t-s )  \Delta} \vert v (  s )   \vert
^{\alpha}v (  s )   $.  In \cite{CH}, this problem is overcome by using
the extra decay of the solution due to dissipation. Namely,
\begin{equation} \label{behaviour}
v (  s )  \sim b\frac{2-N\alpha}{2\alpha|\Im\lambda|}(1-bs)^{-\frac
{2-N\alpha}{2 \alpha }} 
\end{equation}
as $s \to 1/b$ (see \eqref{Decay-v0} below).\ Unfortunately, the price
to pay for this extra decay is the factor $b$ in the right-hand side of
\eqref{behaviour}, which makes it impossible to obtain smallness for large $b$
in the last term in \eqref{Duhamel}, when one applies a contraction argument.
One solution to this problem is to use the factor $2-N\alpha$ in
\eqref{behaviour}, which is small if $\alpha$ is relatively close to the
critical power $\alpha=2/N $,  to close the required estimates. This is done in
\cite{CH}. In the present paper we remove this assumption and replace it by
the condition \eqref{Conditionalpha}. This requires two new ingredients. First
of all, we observe that under certain assumptions, not only the solution
itself has an extra decay. The derivatives as well enjoy this property (see
Proposition~\ref{lem-Decay-v} below). The second ingredient is to allow a very large
growth of the derivatives of the solution as $s \to 1/b$. Roughly speaking, we let the derivatives
$ \vert D^{\beta}v \vert \sim(1-bs)^{- \vert \beta \vert
\sigma} $,  when $s \to 1/b$ (in the previous works \cite{CN1,CH}, all the
derivatives of a given order $\beta$ behave as $(1-bs)^{-\sigma_{ \vert
\beta \vert }} $,  for some $\sigma_{ \vert \beta \vert }\leq1$).
Let us try to explain how we use these ingredients to establish the necessary estimates. Differentiating equation
\eqref{NLS-1} and using the condition $\Im\lambda<0$, we deduce (see
Proposition \ref{eP1} below)
\begin{equation} \label{dis85}
\begin{split} 
 \vert D^{\beta}v \vert  \leq &  \vert D^{\beta}v_{0} \vert \\
&  + \int_{0}^{t} \vert D^{\beta}v \vert ds+C\sum_{\substack{\gamma
_{1}+\gamma_{2}=\beta,\\ \vert \gamma_{1} \vert \geq1}}\int_{0} 
^{t}(1-bt)^{-\frac{4-N\alpha}{2}} \vert D^{\gamma_{1}} \vert
v \vert ^{\alpha} \vert  \vert D^{\gamma_{2}}v \vert ds.
\end{split} 
\end{equation} 
One of the key observations is that due to the dissipation $\Im\lambda<0 $,  the
term $(1-bt)^{-\frac{4-N\alpha}{2}} \vert v \vert ^{\alpha} \vert
D^{\beta}v \vert $ is absent from \eqref{dis85}. Then, as far as we can
control the derivatives $ \vert D^{\gamma_{1}} \vert v \vert
^{\alpha} \vert $ by Proposition~\ref{lem-Decay-v} below, we control the
derivatives of the solution $D^{\beta}v$ without gaining extra singularity or
loosing the smallness because of the large factor $b $.  We must stop this argument at some
stage because, as it can be observed from \eqref{eq-v-4}, the derivatives
$ \vert D^{\gamma_{1}} \vert v \vert ^{\alpha} \vert $ are
estimated in terms of derivatives of higher-order $ \vert \gamma
_{1} \vert +2$, hence a loss of two derivatives. Letting the exact
stage $ \vert \gamma_{1} \vert =M$ at which we stop this argument unknown for
a moment, we estimate the highest two derivatives by assuming that $ \vert
D^{\beta}v \vert \sim(1-bs)^{- \vert \beta \vert \sigma}$ and
using the dissipative behavior \eqref{behaviour}. Then, the derivative
$D^{\beta}\int_{0}^{t}(1-bs)^{-\frac{4-N\alpha}{2}}e^{i (  t-s )
\Delta} \vert v (  s )   \vert ^{\alpha}v (  s )
ds$ in \eqref{Duhamel} is controlled by $\frac{C}{ \vert \beta \vert
\sigma}(1-bs)^{- \vert \beta \vert \sigma} $.  Letting $ \vert
\beta \vert $ be sufficiently large, that is, letting $m$ be sufficiently large, we
obtain a small factor $\frac{C}{ \vert \beta \vert \sigma}$, which
then is used to complete the estimates on the solution $v$ of~\eqref{NLS-1}.

\begin{rem} \label{eRem1} 
Here are some comments on blowup in equation~\eqref{NLS-0}, when the condition~\eqref{Imlamdba} is not satisfied or when $\alpha \ge \frac {4} {N}$.
\begin{enumerate}[{\rm (i)}] 

\item \label{eRem1:1} 
If $\Im \lambda \ge 0$, then blowup may occur in equation~\eqref{NLS-0}.
 Indeed, if $\Im \lambda > 0$, then finite-time blowup occurs for equation~\eqref{NLS-0}, at least for $H^1$-subcritical powers $(N-2) \alpha < 4$.  See~\cite{CMZ,CMHZ}. Moreover, if $\alpha < \frac {2} {N}$, then all nontrivial solutions blow up in finite or infinite time, see~\cite{CCDW}. Finite-time blowup also occurs if $\Im \lambda =0$, $\Re \lambda <0$, and $\alpha \ge  \frac {4} {N}$, since in this case~\eqref{NLS-0}  is the standard focusing nonlinear Schr\"o\-din\-ger equation. 

\item \label{eRem1:2} 
If $\Im \lambda <0$, $\alpha > \frac {4} {N}$ and condition~\eqref{res-lam} is not satisfied, then whether or not some solutions of~\eqref{NLS-0} blow up in finite time seems to be an open question.  
\end{enumerate}  

The rest of this paper is organized as follows. 
In Section~\ref{sPrelim} we establish preliminary estimates, for the nonhomogeneous Schr\"o\-din\-ger  equation,
and for derivatives of the form $D^\beta (  | v |^\rho )$ where $v$  is a given function. 
In Section~\ref{sAPriori}, we prove a priori estimates for certain solutions of~\eqref{NLS-1}. 
These estimates are used in Section~\ref{sGlobal} to prove global existence (i.e., on the time interval $[0, \frac {1} {b})$ for certain solutions of~\eqref{NLS-1}. 
Finally, in Section~\ref{sAsymp}, we describe the  asymptotic behavior of these solutions as $t\to \frac {1} {b}$ and complete the proof of Theorem~\ref{T1}. 

\end{rem} 

\section{Preliminary estimates} \label{sPrelim} 
We begin by proving estimates for the nonhomogeneous Schr\"o\-din\-ger  equation
\begin{equation}  \label{LS}
\begin{cases} 
i \partial _t v+\Delta v=f,\\
v(0,x)=v_{0},
\end{cases} 
\end{equation} 
which are modified versions of estimates in~\cite[Proposition 2.1]{CN}. 

\begin{prop} \label{eP1}
Assume
\begin{equation} \label{def-knm}
k > \frac {N} {2} +2 , \quad n >  \frac {N} {2} +1, \quad  2 m \ge k + n +1,
\end{equation}
\eqref{def-J}, and let $ \Spa $ be defined by~\eqref{n27}.
It follows that there exists a constant $A= A (N, n, k, m)$ such that if $T>0$, $v_{0} \in  \Spa  $ and $f\in C([0,T], \Spa   )$, then for all $0\leq t\leq T $ the solution $v$ of \eqref{LS} satisfies the following estimates: if $ \vert
\beta \vert \leq2m-2$, then
\begin{equation} \label{2m-2} 
\begin{split} 
 \vert \langle x  \rangle^{n}D^{\beta}v \vert \leq & \Vert
\langle\cdot\rangle^{n}D^{\beta}v_{0}\Vert_{L^{\infty}}+\int_{0}^{t}%
\sup_{  |\gamma | \leq \vert \beta \vert +2} \Vert\langle\cdot\rangle^{n}D^{\gamma}v(s)\Vert_{L^{\infty}}ds \\
&  +\Im\int_{0}^{t}\frac{   \langle x \rangle^{2n}  D^{\beta
}f  D^{\beta}\overline{v} }{ \langle x \rangle^{n} \vert D^{\beta}v \vert }ds,
\end{split} 
\end{equation} 
for all $x\in \R^N $.  If $2m-1\leq \vert \beta \vert \leq2m $, 
then
\begin{equation} \label{2m}
\begin{split} 
 \vert \langle x  \rangle^{n}D^{\beta}v \vert \leq & \Vert
\langle\cdot\rangle^{n}D^{\beta}v_{0}\Vert_{L^{\infty}}+ 
A \int_{0}^{t}%
\sup_{  |\beta |+2 \le  |\gamma | \leq  |\beta |+k+2} \Vert\langle
\cdot\rangle^{n}D^{\gamma }v(s)\Vert_{L^{2}}ds \\
&  +\Im\int_{0}^{t}\frac{   \langle x  \rangle^{2n} D^{\beta } f  D^{\beta}\overline{v} }{ 
\langle x \rangle^{n} \vert D^{\beta}v \vert }ds, 
\end{split} 
\end{equation} 
for all $x\in \R^N $.  In the case when $ \vert
\beta \vert ={\nu}+{\mu}+2{m}+1$ with $0\leq{\nu}\leq{k}+1$ and
$0\leq{\mu}\leq{n}$, we have
\begin{equation} \label{2m+1}
\begin{split} 
\Vert\langle\cdot\rangle^{n-\mu}D^{\beta}v\Vert_{L^{2}}\leq & \Vert
\langle\cdot\rangle^{n-\mu}D^{\beta}v_{0}\Vert_{L^{2}}+  (n-\mu ) A \int_{0}^{t} 
 \Vert\langle\cdot\rangle^{n-\mu-1}\nabla D^{\beta
}v\Vert_{L^{2}}ds \\
&  +\int_{0}^{t}\frac{\Im\int_{ \R^N }   \langle x
\rangle^{2n-2\mu} D^{\beta}f  D^{\beta}\overline
{v}  dx}{\Vert\langle\cdot\rangle^{n-\mu}D^{\beta}%
v\Vert_{L^{2}}}ds.  
\end{split} 
\end{equation} 
\end{prop}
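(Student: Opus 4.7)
My plan is to apply $D^\beta$ to both sides of \eqref{LS}, obtaining $i\partial_t w+\Delta w = D^\beta f$ for $w=D^\beta v$, and then derive a differential inequality for $|w|$ weighted by $\langle x\rangle^n$ or for $\|\langle\cdot\rangle^{n-\mu}w\|_{L^2}$. The common starting identity, obtained from $\partial_t w = i\Delta w - iD^\beta f$ by multiplying by $\overline{w}$ and taking the real part, is
\[
\frac{1}{2}\partial_t|w|^2 = -\Im(\overline{w}\Delta w) + \Im(\overline{w}\,D^\beta f).
\]

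For \eqref{2m-2} and \eqref{2m} I would argue pointwise in $x$. Dividing the above by $|w|$, which I would justify by the standard regularization $\sqrt{|w|^2+\varepsilon}$ and then $\varepsilon\to 0$ (the division is harmless because the quotients $\Im(\overline{w}\Delta w)/|w|$ and $\Im(\overline{w}D^\beta f)/|w|$ are dominated pointwise by $|\Delta w|$ and $|D^\beta f|$), then multiplying by $\langle x\rangle^n$ and integrating in $t$, I obtain
\[
\langle x\rangle^n |w(t,x)| \le \|\langle\cdot\rangle^n D^\beta v_0\|_{L^\infty} + \int_0^t \langle x\rangle^n |\Delta w(s,x)|\,ds + \int_0^t \frac{\langle x\rangle^{2n}\Im(D^\beta f\,\overline{w})}{\langle x\rangle^n|w|}(s,x)\,ds.
\]
When $|\beta|\le 2m-2$, the derivatives entering $\Delta w$ have order $|\beta|+2\le 2m$, which is directly controlled by the $L^\infty$-part of the $\Spa$-norm with weight $\langle\cdot\rangle^n$, producing \eqref{2m-2} after taking the pointwise supremum in the second integrand. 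When $2m-1\le|\beta|\le 2m$, the order $|\beta|+2$ exceeds $2m$, so I would instead bound $\langle x\rangle^n|\Delta w|$ using a weighted Sobolev embedding $H^k\hookrightarrow L^\infty$ (valid by \eqref{def-knm} since $k>N/2$) combined with Leibniz, which reduces the control to $L^2$-norms of $\langle\cdot\rangle^n D^\gamma v$ for $|\beta|+2\le|\gamma|\le|\beta|+k+2$. A brief bookkeeping check shows $|\beta|+k+2\le 2m+k+2 = J-n$ by \eqref{def-J}, so $\MDb(|\gamma|)=n$ in the whole range and the weight $\langle\cdot\rangle^n$ is precisely what the $\Spa$-norm provides; this yields \eqref{2m}.

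For the weighted $L^2$ estimate \eqref{2m+1} I would instead multiply the equation for $w$ by $\langle x\rangle^{2n-2\mu}\overline{w}$, integrate over $\R^N$, and take the imaginary part:
\[
\frac{1}{2}\frac{d}{dt}\|\langle\cdot\rangle^{n-\mu}w\|_{L^2}^2 = -\Im\int_{\R^N}\langle x\rangle^{2n-2\mu}\overline{w}\Delta w\,dx + \Im\int_{\R^N}\langle x\rangle^{2n-2\mu}\overline{w}\,D^\beta f\,dx.
\]
After integration by parts in the Laplacian term, the diagonal piece $\int\langle x\rangle^{2n-2\mu}|\nabla w|^2$ is real and drops out of the imaginary part, leaving only the cross term involving $\nabla\langle x\rangle^{2n-2\mu}$. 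Since $|\nabla\langle x\rangle^{2n-2\mu}|\le 2(n-\mu)\langle x\rangle^{2n-2\mu-1}$, Cauchy--Schwarz bounds this cross term by $2(n-\mu)\|\langle\cdot\rangle^{n-\mu-1}\nabla w\|_{L^2}\|\langle\cdot\rangle^{n-\mu}w\|_{L^2}$. Dividing the resulting differential inequality for $\|\langle\cdot\rangle^{n-\mu}w\|_{L^2}^2$ by $2\|\langle\cdot\rangle^{n-\mu}w\|_{L^2}$ and integrating in $t$ yields \eqref{2m+1}.

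The main obstacle is legitimizing the pointwise differential inequality for $|w|$ at zeros of $w$ in \eqref{2m-2} and \eqref{2m}, which I would handle via the $\sqrt{|w|^2+\varepsilon}$ regularization mentioned above together with a dominated-convergence passage to the limit. A secondary point, essential in \eqref{2m}, is the verification that the highest order $|\beta|+k+2$ arising from the Sobolev step still lies in the range where $\MDb=n$; this is precisely why \eqref{def-J} sets $J = 2m+2+k+n$, and the proposition's range $|\beta|\le 2m$ is exactly what makes this fit.
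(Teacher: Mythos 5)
Your proposal is correct and takes essentially the same route as the paper: apply $D^\beta$ (the paper applies $\langle x\rangle^n D^\beta$, but the weight commutes with $\partial_t$ and $D^\beta$ so the resulting identity is the same), multiply by the conjugate and take the imaginary/real part, then argue pointwise with Sobolev embedding for $|\beta|\le 2m$ and via a weighted $L^2$ energy computation with one integration by parts for $|\beta|\ge 2m+1$. Your explicit $\sqrt{|w|^2+\varepsilon}$ regularization of the division by $|w|$ just fills in a step the paper performs without comment.
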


\begin{proof}
We first prove \eqref{2m-2}. Let $ \vert \beta \vert \leq2{m}$.
Applying $ \langle x \rangle ^{n}D^{\beta}$ to equation~\eqref{LS} we obtain
\begin{equation*}
i \partial _t (\langle x \rangle^{n}D^{\beta}v)=-\langle x \rangle^{n}D^{\beta
}\Delta v+\langle x \rangle^{n}D^{\beta}f. 
\end{equation*}
Multiplying by $\langle x \rangle^{n}D^{\beta}\overline{v}$ and taking the
imaginary part we deduce that
\begin{equation} \label{dis64}
 \frac{1}{2} \partial _t  (   \vert \langle x \rangle^{n}D^{\beta}v \vert
^{2} )   =-\Im (  \langle x \rangle^{2n}D^{\beta}\Delta vD^{\beta}\overline
{v} )  +\Im (  \langle x \rangle^{2n}  D^{\beta}f   D^{\beta}\overline{v}  )  . 
\end{equation} 
Integrating this last equation on $(0,t)$ with $0<t\leq T$, we deduce that
\begin{equation} \label{dis25}
\begin{split} 
 \vert \langle x \rangle^{n}D^{\beta}v \vert \leq &  \vert
\langle x \rangle^{n}D^{\beta}v_{0} \vert \\
&  +\int_{0}^{t} \vert \langle x \rangle^{n}D^{\beta} \Delta  v \vert
ds+\Im\int_{0}^{t}\frac{   \langle x \rangle^{2n}  D^{\beta }f D^{\beta}\overline{v} }{ \vert
\langle x \rangle^{n}D^{\beta}v \vert }ds.
\end{split} 
\end{equation} 
If $ \vert \beta \vert \leq2{m-2}$, then \eqref{2m-2} immediately follows from \eqref{dis25}. 
Suppose now $2{m-2} \leq \vert \beta \vert \leq2{m}$. Since $k-2 >\frac {N} {2}$ by~\eqref{def-knm}, it follows from Sobolev's embedding theorem that $  \|\langle\cdot\rangle^{n}D^{\beta}\Delta v(s) \| _{ L^\infty  } \le 	C \| \langle\cdot\rangle^{n}D^{\beta}\Delta v(s) \| _{ H^{k-2} } $ where $C$ depends on $N$ and $k$. Using Leibniz's formula together with the estimate $ |D^\gamma \langle x\rangle ^n| \le C(n,  |\gamma |) \langle x\rangle ^n$ (see~\cite[formula~(A.3)]{CN}), we deduce that
\begin{equation} \label{fCMM1} 
 \|\langle\cdot\rangle^{n}D^{\beta}\Delta v(s) \| _{ L^\infty  } \le C \sum_{  |\gamma |\le k-2 }  \| \langle \cdot \rangle^{n} D^{ \gamma + \beta} \Delta v(s) \| _{ L^2  } 
\end{equation} 
for some constant $C$ depending on $N, k, n, m$; hence~\eqref{2m} follows from~\eqref{dis25}.

Finally, suppose that $ \vert \beta \vert ={\nu}+{\mu}+2{m}+1$ with
$0\leq{\nu}\leq{k}+1$ and $0\leq{\mu}\leq{n}$. Applying~\eqref{dis64} with
$n$ replaced with $n-\mu$ and integrating in $x$ we obtain
\begin{equation} \label{dis30}
\begin{split} 
 \frac{1}{2}\frac{d}{dt}\Vert\langle\cdot\rangle^{n-\mu}D^{\beta}
v\Vert_{L^{2}}^{2}   = &  - \Im    \int_{ \R^N }  \langle x \rangle^{2n- 2 \mu } \Delta D^{\beta} vD^{\beta}\overline
{v} dx  \\ &  +\Im\int_{ \R^N }   \langle x \rangle^{2n-2\mu} D^{\beta}f D^{\beta}\overline{v}  dx.
\end{split} 
\end{equation} 
Integrating by parts the first term in the right-hand side of~\eqref{dis30}, we see that 
\begin{equation*}
 - \Im    \int_{ \R^N }  \langle x \rangle^{2n- 2 \mu } \Delta D^{\beta} vD^{\beta}\overline
{v} dx =  \Im    \int_{ \R^N } \nabla  \langle x \rangle^{2n- 2 \mu } \nabla D^{\beta} v D^{\beta}\overline
{v} dx .
\end{equation*} 
If $\mu<n$, we use the estimate $ \vert
\nabla\langle x\rangle^{2n-2\mu} \vert \leq C \langle x\rangle
^{2n-2\mu-1}$ together with Cauchy-Schwarz to obtain
\begin{equation*} 
 - \Im    \int_{ \R^N }  \langle x \rangle^{2n- 2 \mu } \Delta D^{\beta} vD^{\beta}\overline
{v} dx  \leq C  \Vert\langle\cdot\rangle^{n-\mu-1}\nabla D^{\beta}v\Vert_{L^{2}
}\Vert\langle\cdot\rangle^{n-\mu}D^{\beta}v\Vert_{L^{2}}.
\end{equation*} 
If $\mu=n$, then $\nabla\langle x\rangle^{2n-2\mu}=0$. In both cases, dividing~\eqref{dis30} by $\Vert\langle\cdot\rangle^{n-\mu}D^{\beta} v\Vert_{L^{2}}$ and integrating on $(0,t)$, we obtain \eqref{2m+1}.
\end{proof}

We now recall the local wellposedness result for~\eqref{NLS-1} in the space $\Spa$ (see~\cite[Theorem~1]{CN1} and~\cite[Proposition~4.1]{CN}).

\begin{prop} \label{LWP}
Let $\alpha >0$, assume~\eqref{def-knm}, $n > \frac {N} {2\alpha }$, and let $ \Spa $ be defined
by~\eqref{n27}.
Let $\lambda\in  \C $ and $b\geq0$. If $v_{0}\in
 \Spa $ satisfies~\eqref{boundbelow}, then there exist $0<T<\frac{1}{b}$ and a unique solution $v\in
C([0,T], \Spa )$ of ~\eqref{NLS-1} satisfying
\begin{equation}
\inf _{0\leq t\leq T}\inf _ {x\in \R^N } (\langle
x\rangle^{n}|v(t,x)|)>0. \label{inf}%
\end{equation}
Moreover, $v$ can be extended on a maximal existence interval $[0, \Tma )$
with $0< \Tma \leq\frac{1}{b}$ to a solution $v\in C([0, \Tma 
), \Spa )$ satisfying~\eqref{inf} for all $0<T< \Tma $. Furthermore, if
$ \Tma <\frac{1}{b}$, then
\begin{equation} \label{blowup}
\Vert v(t)\Vert_ \Spa + \Bigl(  \inf _{x\in \R^N 
}\langle x\rangle^{n}|v(t,x)| \Bigr)  ^{-1} \goto _{ t \uparrow \Tma } \infty. 
\end{equation}
\end{prop}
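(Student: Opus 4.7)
\smallskip

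The plan is to adapt the contraction-mapping argument from \cite[Theorem~1]{CN1} and \cite[Proposition~4.1]{CN}, the only novelty being the presence of the bounded continuous factor $(1-bs)^{-(4-N\alpha)/2}$ on a fixed time interval $[0,T]$ with $T<\frac{1}{b}$. I would fix parameters $R>\Vert v_0\Vert_\Spa$ and $\eta\in(0,\inf_x \langle x\rangle^n |v_0(x)|)$, and for $0<T<\frac{1}{b}$ to be chosen small, consider the complete metric space
\begin{equation*}
E_T = \bigl\{ v\in C([0,T],\Spa) : \Vert v\Vert_{L^\infty((0,T),\Spa)}\le R,\ \inf_{t,x} \langle x\rangle^n |v(t,x)|\ge \eta \bigr\},
\end{equation*}
equipped with a suitable complete (weaker) distance, and define
\begin{equation*}
\Phi(v)(t) = e^{it\Delta}v_0 -i\lambda\int_0^t (1-bs)^{-\frac{4-N\alpha}{2}} e^{i(t-s)\Delta}(|v(s)|^\alpha v(s))\, ds.
\end{equation*}

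To show $\Phi$ preserves $E_T$, I would apply Proposition~\ref{eP1} to $v$ solving $i\partial_t v+\Delta v = f$ with $f(s) = \lambda (1-bs)^{-(4-N\alpha)/2}|v(s)|^\alpha v(s)$, so that the estimates~\eqref{2m-2}--\eqref{2m+1} reduce the task to bounding the weighted Sobolev norms of $|v|^\alpha v$ on the time interval $[0,T]$. The non-vanishing condition~\eqref{inf} is essential here: on any compact subset of $\{v\ne 0\}$, the map $v\mapsto |v|^\alpha v$ is smooth, and the Faà di Bruno / Leibniz formulae express $D^\beta(|v|^\alpha v)$ as a sum of products of $|v|^{\alpha-j}$, $\bar v^{a}$, $v^{b}$, and derivatives $D^{\gamma_i}v$, $D^{\gamma_i}\bar v$, whose weighted $L^\infty$/$L^2$ norms are controlled by $R$ and $\eta$ (the condition $n>\frac{N}{2\alpha}$ is what guarantees $\langle x\rangle^n |v|^{\alpha}\in L^\infty$ combines correctly with the subsequent weight distribution to give $\Spa$-bounds on $|v|^\alpha v$ independent of the smoothness of $x\mapsto |x|^\alpha$ at $0$). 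The factor $(1-bs)^{-(4-N\alpha)/2}$ is uniformly bounded by $(1-bT)^{-(4-N\alpha)/2}$, so integrating from $0$ to $t\le T$ gives a gain of a factor $T$, and choosing $T$ small enough yields $\Phi(E_T)\subset E_T$. The nonvanishing condition is propagated using the equation $\partial_t |v|^2 = -2\Im(\Delta v\,\bar v)-2\Im\lambda (1-bt)^{-(4-N\alpha)/2}|v|^{\alpha+2}$, together with bounds on $\Delta v$ from the membership in $E_T$; for $T$ small one retains $\langle x\rangle^n|v|\ge \eta$.

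The contraction estimate is similar: applying Proposition~\ref{eP1} to $v-\tilde v$ with $f = \lambda(1-bs)^{-(4-N\alpha)/2}(|v|^\alpha v - |\tilde v|^\alpha \tilde v)$, and using the non-vanishing to write $|v|^\alpha v - |\tilde v|^\alpha \tilde v$ as $(\alpha+1)$ times an integral average (in the sense of a smooth path avoiding zero between $v$ and $\tilde v$, possible because $E_T$ keeps both bounded away from $0$), yields Lipschitz control of the nonlinearity in $\Spa$ (or in a convenient weaker norm) with a constant going to $0$ as $T\to 0$. This gives the fixed point $v\in C([0,T],\Spa)$.

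Uniqueness and the extension to a maximal interval $[0,\Tma)$ with $\Tma \le \frac{1}{b}$ are then standard: one iterates the local existence starting from each $t_0<\Tma$, noting that the size of $T-t_0$ in which one can continue depends only on $\Vert v(t_0)\Vert_\Spa$, $(\inf_x \langle x\rangle^n|v(t_0,x)|)^{-1}$, and $(1-bt_0)^{-1}$. Consequently, if $\Tma <\frac{1}{b}$, then the quantity $\Vert v(t)\Vert_\Spa + (\inf_x\langle x\rangle^n|v(t,x)|)^{-1}$ must be unbounded as $t\uparrow\Tma$, which is precisely~\eqref{blowup}. The main obstacle in implementing this is carefully arranging the weighted-norm bookkeeping in the estimates for $|v|^\alpha v\in\Spa$ (particularly for the top two derivative levels $2m+1\le|\beta|\le J$, which are controlled only in weighted $L^2$), but once Proposition~\ref{eP1} is available together with the Faà di Bruno expansion and the nonvanishing hypothesis, this is essentially the same calculation as in~\cite[Proposition~4.1]{CN}, now carrying a harmless bounded multiplicative factor $(1-bs)^{-(4-N\alpha)/2}$.
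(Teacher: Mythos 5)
The paper does not prove Proposition~\ref{LWP} but simply cites \cite[Theorem~1]{CN1} and \cite[Proposition~4.1]{CN}, and your proposal correctly reconstructs that contraction-mapping argument in $\Spa$: Proposition~\ref{eP1} for the linear estimates, Fa\`a di Bruno together with the non-vanishing hypothesis to control $D^\beta(|v|^\alpha v)$, short-time propagation of the lower bound, and standard continuation giving the blowup alternative~\eqref{blowup}. The key added observation, which is right, is that the factor $(1-bs)^{-\frac{4-N\alpha}{2}}$ is uniformly bounded on any $[0,T]$ with $T<\frac{1}{b}$, so it contributes only a harmless constant and does not affect the local theory.
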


In the following section, we will have to estimate $  |D^\beta (  |v|^\rho  )| $ for $\rho \in \R$ and $ |\beta |\ge 1$ in terms of $v$ and its derivatives. This is the purpose of the following three lemmas.

\begin{lem} \label{eCase2m2} 
Assume~\eqref{def-knm}, \eqref{def-J}, and let $ \Spa $ be defined by~\eqref{n27}.
There exists a constant $C$ such that if 
 $K\ge 1$ and $v\in  \Spa $ satisfies
\begin{equation}  \label{IV}
\Vert v \Vert_ \Spa + \Bigl(  \inf _{x\in \R^N 
}\langle x\rangle^{n}|v (x)| \Bigr)  ^{-1}\leq K, 
\end{equation}
then
\begin{equation} \label{feCase2m2:1} 
\Bigl\Vert \frac{D^{\beta} v }{|v|} \Bigr\Vert _{L^{\infty}} \le C K^2
\end{equation} 
for all $ |\beta |\le 2m +2$, where the constant $C$ depends only on $N, k, n, m$. 
\end{lem}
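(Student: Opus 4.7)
\medskip

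\noindent\textbf{Proof plan.}
The hypothesis~\eqref{IV} yields both an upper bound on the derivatives of $v$ (through $\|v\|_{\Spa}\le K$) and a pointwise lower bound $|v(x)|\ge K^{-1}\langle x\rangle^{-n}$, equivalently $|v(x)|^{-1}\le K\langle x\rangle^{n}$. The plan is to prove the stronger uniform estimate
\begin{equation*}
\|\langle\cdot\rangle^{n} D^\beta v\|_{L^\infty}\le C K \quad \text{for all }|\beta|\le 2m+2,
\end{equation*}
and then multiply by the lower bound on $|v|^{-1}$ to obtain \eqref{feCase2m2:1}. The case $|\beta|\le 2m$ is immediate from the very definition of $\Spa$ in~\eqref{n27}, giving $\|\langle\cdot\rangle^{n} D^\beta v\|_{L^\infty}\le\|v\|_{\Spa}\le K$. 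So the real work concerns $|\beta|=2m+1$ and $|\beta|=2m+2$.

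For those values of $|\beta|$, I would set $w=\langle\cdot\rangle^{n} D^\beta v$ and apply the Sobolev embedding $H^{k-2}(\R^N)\hookrightarrow L^\infty(\R^N)$, which is available because $k-2>\frac{N}{2}$ by~\eqref{fCondonk}. One then expands $D^\mu w$ for $|\mu|\le k-2$ by Leibniz, using the standard pointwise bound $|D^{\mu_1}\langle x\rangle^n|\le C\langle x\rangle^{n-|\mu_1|}\le C\langle x\rangle^n$ (see~\cite[formula (A.3)]{CN}, as invoked in the proof of Proposition~\ref{eP1}). This reduces matters to bounding $\|\langle\cdot\rangle^{n}D^{\mu_2+\beta}v\|_{L^2}$ for multi-indices with $|\mu_2+\beta|$ ranging between $|\beta|\ge 2m+1$ and $|\beta|+k-2\le 2m+k$. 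Crucially, because $J-n=2m+2+k$, every such index satisfies $|\mu_2+\beta|\le J-n$, so the weight $\MDb(|\mu_2+\beta|)$ in the definition of $\Spa$ is exactly $n$. Hence each such $L^2$ norm is already controlled by $\|v\|_{\Spa}\le K$, no trade-offs needed.

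Putting the pieces together gives $\|w\|_{H^{k-2}}\le CK$, hence $\|\langle\cdot\rangle^{n}D^\beta v\|_{L^\infty}\le CK$ for $|\beta|=2m+1,2m+2$. Combined with $|v(x)|^{-1}\le K\langle x\rangle^{n}$, this yields
\begin{equation*}
\Bigl|\frac{D^\beta v(x)}{|v(x)|}\Bigr|\le K\langle x\rangle^{n}\cdot CK\langle x\rangle^{-n}= CK^2,
\end{equation*}
uniformly in $x$, which is~\eqref{feCase2m2:1}.

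The only subtlety is bookkeeping: one must verify that for every $|\beta|\in\{2m+1,2m+2\}$ and every $\mu_2$ arising from the Leibniz expansion with $|\mu|\le k-2$, the multi-index $\mu_2+\beta$ lies in the range where the $L^2$-piece of the $\Spa$-norm is effective with weight exactly $n$ (i.e.\ $2m+1\le|\mu_2+\beta|\le J-n$). This is the one point where the choice of parameters in~\eqref{fCondonk}--\eqref{def-J} enters, and it is straightforward: the lower inequality follows from $|\mu_2+\beta|\ge|\beta|\ge 2m+1$, while the upper one follows from $|\mu_2+\beta|\le 2m+k<2m+2+k=J-n$. No use of the nonlinear structure or of $\alpha$ is needed; the lemma is purely a consequence of the definition of $\Spa$ and the chosen relations among $k,n,m$.
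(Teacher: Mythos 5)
Your proof matches the paper's argument essentially line for line: the reduction $\frac{|D^\beta v|}{|v|}=\frac{\langle x\rangle^n|D^\beta v|}{\langle x\rangle^n|v|}\le K\|\langle\cdot\rangle^n D^\beta v\|_{L^\infty}$, the direct read-off from the definition of $\Spa$ for $|\beta|\le 2m$, and the Sobolev embedding $H^{k-2}\hookrightarrow L^\infty$ combined with Leibniz and $\MDb(|\gamma+\beta|)=n$ (since $|\gamma+\beta|\le 2m+k\le J-n$) for $|\beta|\in\{2m+1,2m+2\}$. The only difference is cosmetic: you correctly note that $2m+k<J-n=2m+2+k$, whereas the paper's proof carelessly writes ``$2m+k=J-n$'' (a harmless slip, since equality is not actually needed).
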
 

\begin{proof} 
We write 
\begin{equation*} 
 \frac{ | D^{\beta} v |  }{|v|} =  \frac{ \langle x\rangle ^n | D^{\beta} v|  }{ \langle x\rangle ^n  |v|}\le K  \| \langle x\rangle ^n  D^{\beta} v \| _{ L^\infty  }.
\end{equation*} 
Estimate~\eqref{feCase2m2:1} immediately follows if $ |\beta |\le 2m$. 
If $2m+1 \le  |\beta |\le 2m+2$, we use the Sobolev embedding (cf.~\eqref{fCMM1})
\begin{equation} \label{fEstSob1} 
 \|\langle\cdot\rangle^{n}D^{\beta} v \| _{ L^\infty  } \le C \sum_{  |\gamma |\le k-2 }  \| \langle \cdot \rangle^{n} D^{ \gamma + \beta} v \| _{ L^2  } \le C  \| v \|_\Spa ,
\end{equation} 
since $2m+ 1 \le  |\beta +\gamma |\le 2m +k = J-n$. This completes the proof.
\end{proof} 

\begin{lem} \label{eRemsb1} 
Given $\rho \in \R$ and $\beta $ a multi-index with $ |\beta | \ge 1$, there exists a constant $C$ such that the following estimate holds. If $U\subset \R^N $ is an open subset, $v \in C^{ |\beta |} (U, \C) $, $ v(x) \not = 0$ for all $x\in U$, then
$  |v|^\rho \in  C^{ |\beta |} ( U, \R)$ and
\begin{equation} \label{fEstDerv1} 
\frac { | D^\beta (  |v|^\rho  ) | } { |v|^\rho } \le  | \rho | \frac {| D^{\beta  } v |} { |v|} +   C \sup  _{ D } \prod _{ \ell =1 }^{  |\beta |}    \frac {| D^{\beta _\ell } v |} { |v|}  
\end{equation} 
where $D$ is the set of $( \beta _\ell ) _{ 1\le  \ell \le  |\beta | }$ where $\beta _\ell$ are multi-indices $0 \le  |\beta _\ell | \le  |\beta | -1$ such that $\sum_{ \ell=1 }^{ |\beta |} \beta _\ell =\beta $. 
\end{lem}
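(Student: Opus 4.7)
The plan is to write $|v|^{\rho}=g^{\rho/2}$ with $g:=v\bar v=|v|^{2}$. Since $v$ vanishes nowhere on $U$, we have $g\in C^{|\beta|}(U,(0,\infty))$, and composition with the $C^{\infty}$ map $t\mapsto t^{\rho/2}$ on $(0,\infty)$ immediately yields $|v|^{\rho}\in C^{|\beta|}(U,\R)$. The analytic tool is then Faà di Bruno's formula applied to this composition, followed by Leibniz's rule applied to $g=v\bar v$.

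Concretely, Faà di Bruno gives
\[
D^{\beta}(g^{\rho/2}) \;=\; \sum_{k=1}^{|\beta|} c_{k,\rho}\, g^{\rho/2-k}\sum_{\pi\in P_{k}(\beta)}\prod_{i=1}^{k} D^{\pi_{i}} g,
\]
where $P_{k}(\beta)$ consists of ordered tuples of multi-indices with $|\pi_{i}|\ge1$ and $\sum_{i}\pi_{i}=\beta$. Expanding each $D^{\pi_{i}}g=\sum_{\gamma_{i}\le\pi_{i}}\binom{\pi_{i}}{\gamma_{i}} D^{\gamma_{i}}v\cdot D^{\pi_{i}-\gamma_{i}}\bar v$ by Leibniz, the right-hand side becomes a finite linear combination of terms of the form $c\, g^{\rho/2-k}\prod_{j=1}^{2k} D^{\alpha_{j}} w_{j}$ with $w_{j}\in\{v,\bar v\}$ and $\sum_{j}\alpha_{j}=\beta$.

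The proof then splits into a leading term and a remainder. The only way this expansion can produce a derivative factor of full order $|\beta|$ is in the case $k=1$, $\pi_{1}=\beta$, with Leibniz index $\gamma_{1}\in\{0,\beta\}$; summing exactly those two contributions gives
\[
\rho\, |v|^{\rho-2}\,\Re(\bar v\, D^{\beta}v),
\]
whose modulus is bounded by $|\rho|\,|v|^{\rho-1}|D^{\beta}v|$, yielding the first term of \eqref{fEstDerv1} after division by $|v|^{\rho}$. For every remaining term, each derivative factor has order at most $|\beta|-1$: if $k=1$ with $0<|\gamma_{1}|<|\beta|$ then both Leibniz factors have order in $[1,|\beta|-1]$, and if $k\ge 2$ then $|\pi_{i}|\le|\beta|-1$ for every $i$, so the same holds after the Leibniz expansion.

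Finally, I would bound each such residual term in modulus and use $|D^{\alpha_{j}}\bar v|=|D^{\alpha_{j}}v|$ to write it as $|c|\,|v|^{\rho}\prod_{j=1}^{2k}\frac{|D^{\alpha_{j}}v|}{|v|}$. Since the $\alpha_{j}$ sum to $\beta$ with $|\alpha_{j}|\le|\beta|-1$ and since $\frac{|v|}{|v|}=1$, the product can be padded with order-zero factors (or grouped) to a product of exactly $|\beta|$ factors indexed by some $(\beta_{\ell})\in D$; dividing by $|v|^{\rho}$, bounding by the supremum over $D$, and summing over the finitely many terms of the expansion produces the second term of \eqref{fEstDerv1}, with $C$ depending only on $\rho$, $|\beta|$, and $N$. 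The main obstacle is purely combinatorial: one must carefully verify that the two "full-order" $k=1$ Leibniz contributions are the only place where an order-$|\beta|$ derivative of $v$ can appear, so that every other term can honestly be absorbed into the $D$-indexed product.
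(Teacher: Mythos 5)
Your proof is correct and follows essentially the same route as the paper: both apply Fa\`a di Bruno's formula to $|v|^\rho = (v\bar v)^{\rho/2}$, expand each $D^{\gamma}(v\bar v)$ by Leibniz, observe that the only way to produce a factor $D^\beta v$ (or $D^\beta\bar v$) is the $\nu=1$ (your $k=1$) case with the extreme Leibniz indices, isolate that contribution as $\rho\,|v|^{\rho-2}\Re(\bar v D^\beta v)$, and pad the remaining products with order-zero factors to land in $D$. The only cosmetic difference is that the paper separates the order-one blocks $|\gamma_\ell|=1$ from the others before applying Leibniz, whereas you apply Leibniz uniformly; this does not change the argument.
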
 

\begin{proof} 
By the Fa\`a di Bruno's formula (see Corollary~2.10 in~\cite{CoSa}), $D^\beta ( \varphi (  |v|^2) )$ is a sum of terms of the form
\begin{equation*} 
\varphi ^{(\nu)} (  |v|^2 ) \prod _{ \ell =1 }^{ \nu } D^{ \gamma _\ell} ( |v|^2 ) ,
\end{equation*} 
with appropriate coefficients, where $\nu \in \{1, \cdots,  |\beta | \}$, $ | \gamma _\ell | \ge 1$ and  $  \sum _{ \ell=1 }^{ \nu }   \gamma  _\ell = \beta  $.
Applying this to $\varphi (s)= s^{\frac {\rho } {2}}$, we see that $ D^\beta (  |v|^\rho  ) $ is a sum of terms of the form
\begin{equation*} 
\Term =  |v|^{\rho -2 \nu } \prod _{ \ell =1 }^{ \nu }  D^{ \gamma _\ell} ( |v|^2 ) ,
\end{equation*} 
with appropriate coefficients, and the same relations as above on the $\gamma _\ell$. 
For a term $\Term $ as above, we let $L_1$ and $L_2$ the (possibly empty) sets of $\ell \in \{ 1, \cdots ,\nu \}$ for which $ |\gamma _\ell | =1$ and $  | \gamma _\ell | \ge 2$, respectively. If $\nu _1 = \# L_1$ and $\nu _2 = \# L_2$, then clearly
 $\nu _1 + \nu _2 = \nu$, and $\nu _1 + 2 \nu _2 \le  |\beta |$. Next, we note that $ |v|^2 = v  \overline{v} $.
Therefore, if $\ell \in L_1$, then $ | D^{ \gamma _\ell } (  | v |^2 ) | \le 2  |v| \,  |D^{\gamma _\ell } v| $; and 
if $ \ell \in L_2$, then by Leibniz's rule, $ | D^{ \gamma _\ell } (  | v |^2 ) |$ is estimated by a sum of terms of the form $  |D^{ \gamma _\ell ^1 } v | \,  |D^{ \gamma _\ell ^2 } v | $ with $\gamma _\ell ^1 + \gamma _\ell ^2 = \gamma _\ell $. 
Thus we see that $ | \Term |$ is estimated by a sum of terms of the form 
\begin{equation*} 
  |v|^{\rho -2 \nu + \nu _1} ( \prod _{ \ell \in L_1 }  | D^{ \gamma _\ell} v | )  ( \prod _{ \ell \in L_2 }  | D^{ \gamma _\ell ^1 } v | \,  | D^{ \gamma _\ell ^2 } v | ) ,
\end{equation*} 
which we rewrite, since $2\nu = 2 \nu _1 + 2\nu _2$, in the form
\begin{equation*} 
  |v|^{\rho  } \Bigl( \prod _{ \ell \in L_1 }  \frac {| D^{ \gamma _\ell} v |} { |v|} \Bigr)  \Bigl( \prod _{ \ell \in L_2 }  \frac {| D^{ \gamma _\ell ^1 } v |} {  |v| } \,  \frac {| D^{ \gamma _\ell ^2 } v |} {  |v|} \Bigr) .
\end{equation*} 
Therefore, using $\nu _1 + 2 \nu _2 \le  |\beta |$, we see that $  |D^\beta (  |v|^\rho  )| $ is estimated by a sum of terms of the form
\begin{equation*} 
 |v|^{\rho } \prod _{ \ell =1 }^{  |\beta |}    \frac { | D^{\beta _\ell } v | } {  |v| } ,
\end{equation*} 
with appropriate coefficients, where possibly $ |\beta _\ell | = 0$ and $  \sum _{ \ell=1 }^{  |\beta | }   \beta _\ell = \beta  $. 
Finally, we notice that the only term where derivatives of order $ |\beta |$ appear in the development of $\frac { | D^\beta (  |v|^\rho  ) | } { |v|^\rho }$ corresponds to $\nu =1$ and is given by
\begin{equation*} 
\frac {\rho } {2}   |v|^{-2} ( vD^\beta  \overline{v} +  \overline{v} D^\beta v ),
\end{equation*} 
which yields the first term in~\eqref{fEstDerv1}. 
Hence~\eqref{fEstDerv1} is proved. 
\end{proof} 

\begin{lem} \label{elemu1} 
Assume~\eqref{def-knm}, \eqref{def-J}, and let $ \Spa $ be defined by~\eqref{n27}.
Given $\rho >0$, there exists a constant $C$ such that the following inequalities hold. 
If $K\ge 1$ and $v\in  \Spa $ satisfies~\eqref{IV}, then 
\begin{equation} \label{elemu1:1}
 \Bigl\| \frac {D^\beta (  |v|^\rho  )} {  |v|^\rho } \Bigr\| _{ L^\infty  } \le C K^{2  |\beta |} ,
\end{equation} 
and
\begin{equation}  \label{elemu1:2}
 \| \langle \cdot \rangle ^{n\rho  } D^\beta (  |v|^\rho   ) \| _{ L^\infty  } \le C K^{ \rho + 2  |\beta |} , \
\end{equation} 
for $  | \beta | \le 2m +2 $. 
Moreover, if 
\begin{equation} \label{elemu1:3:b1}
\eta < n\rho - \frac {N} {2}, \quad \eta \le - n + \MDb (  |\beta | ) + n\rho ,
\end{equation} 
 then for $C$ possibly larger, 
\begin{equation} \label{elemu1:3}
  \| \langle \cdot \rangle ^{ \eta } D^\beta (  |v|^\rho  ) \| _{ L^2  } \le C K^{ \rho + 2  |\beta |} 
\end{equation} 
for all $  |\beta | \le J$,  all $K\ge 1$ and all $v\in \Spa$ satisfying~\eqref{IV}, where $  \MDb (  | \beta | ) $ is defined by~\eqref{fDfnMdb}. 
\end{lem}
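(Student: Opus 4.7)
The strategy is to apply the pointwise bound from Lemma~\ref{eRemsb1} and then distribute each factor $\frac{|D^\gamma v|}{|v|}$ either in $L^\infty$ (via Lemma~\ref{eCase2m2}, when $|\gamma|\le 2m+2$) or in weighted $L^2$ using the norms built into the definition of $\Spa$ (when $|\gamma|\ge 2m+1$). Throughout, I will use~\eqref{IV} both as the upper bound $|v|\le K\langle\cdot\rangle^{-n}$ and, via $|v|^{-1}\le K\langle\cdot\rangle^{n}$, in the two-sided form $|v|^{\rho-1}\le K^{|\rho-1|}\langle\cdot\rangle^{-n(\rho-1)}$.

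For~\eqref{elemu1:1}, every factor that appears on the right-hand side of Lemma~\ref{eRemsb1} has a derivative index of order $\le|\beta|\le 2m+2$, so Lemma~\ref{eCase2m2} bounds each by $CK^2$. Taking the product of at most $|\beta|$ such factors yields $CK^{2|\beta|}$. The bound~\eqref{elemu1:2} then follows at once from~\eqref{elemu1:1} and the pointwise inequality $\langle\cdot\rangle^{n\rho}|v|^\rho\le K^\rho$.

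For~\eqref{elemu1:3} I split on the size of $|\beta|$. If $|\beta|\le 2m+2$, the bound follows by combining~\eqref{elemu1:2} with the integrability of $\langle\cdot\rangle^{2(\eta-n\rho)}$, which is exactly the first condition in~\eqref{elemu1:3:b1}. If $2m+3\le|\beta|\le J$, I multiply the estimate from Lemma~\ref{eRemsb1} by $|v|^\rho$ and treat the two types of terms separately. The first term, $|\rho||v|^{\rho-1}|D^\beta v|$, is converted via the two-sided bound on $|v|$ into $K^{|\rho-1|}\langle\cdot\rangle^{\eta-n(\rho-1)}|D^\beta v|$; the second condition in~\eqref{elemu1:3:b1} reads precisely $\eta-n(\rho-1)\le\MDb(|\beta|)$, so the weighted $L^2$ norm of $D^\beta v$ is bounded by $\|v\|_\Spa\le K$.

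The heart of the proof lies in the product terms $|v|^\rho\prod_\ell\frac{|D^{\beta_\ell}v|}{|v|}$. Here the condition $2m>k+n+1$ in~\eqref{fCondonm} gives $J<2(2m+3)$, so among the indices $\beta_\ell$ with $\sum_\ell|\beta_\ell|=|\beta|\le J$, at most one (call it $\beta_*$) can satisfy $|\beta_*|\ge 2m+3$. I bound every factor other than $\frac{|D^{\beta_*}v|}{|v|}$ in $L^\infty$ via Lemma~\ref{eCase2m2} by $CK^2$, and handle $\frac{|D^{\beta_*}v|}{|v|}$ as in the previous paragraph, using the monotonicity of $\MDb$ to transfer the weight condition from $\MDb(|\beta|)$ to $\MDb(|\beta_*|)$. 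A short exponent count shows that the resulting factor $K^{2(|\beta|-|\beta_*|)+|\rho-1|+1}$ is bounded by $K^{\rho+2|\beta|}$, since $|\rho-1|+1-\rho\le 2|\beta_*|$ whenever $|\beta_*|\ge 1$ and $\rho>0$. The main obstacle is exactly this combinatorial step: the simple $L^\infty$--$L^2$ splitting succeeds only because no two ``large'' derivatives can coexist in a single product of Lemma~\ref{eRemsb1}, and this is precisely what the calibration of $m$ in~\eqref{fCondonm} guarantees.
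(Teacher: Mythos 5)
Your proof is correct and follows essentially the same route as the paper: apply Lemma~\ref{eRemsb1}, observe via $J<2(2m+3)$ (a consequence of $2m\ge k+n+1$ from~\eqref{def-knm}, which is the hypothesis actually invoked, rather than~\eqref{fCondonm}) that at most one factor carries a large derivative, bound that one in weighted $L^2$ against the $\Spa$ norm using monotonicity of $\MDb$, and bound the remaining factors in $L^\infty$ via Lemma~\ref{eCase2m2}. The only cosmetic differences are your threshold $2m+2$ versus the paper's $2m$, and your combination $|v|^{\rho-1}D^{\beta_*}v$ versus the paper's separate treatment of $|v|^{\rho}$ and $D^{\beta_*}v/|v|$; you should also state explicitly that when no $\beta_\ell$ exceeds $2m+2$ the whole product is $L^\infty$-bounded and~\eqref{elemu1:3} follows as in the low-order case.
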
 

\begin{proof} 
Estimates~\eqref{elemu1:1} and~\eqref{elemu1:2} are immediate consequences of~\eqref{fEstDerv1} and~\eqref{feCase2m2:1}. 

Suppose now~\eqref{elemu1:3:b1}. In particular $ \langle \cdot \rangle ^{ \eta - n\rho } \in L^2 (\R^N ) $; and so $  \| \langle \cdot \rangle ^\eta D^\beta (  |v|^\rho  ) \| _{ L^2  } \le C  \| \langle \cdot \rangle ^{n\rho  } D^\beta (  |v|^\rho   ) \| _{ L^\infty  } $. Hence~\eqref{elemu1:3} in the case $ |\beta |\le 2m$ follows from~\eqref{elemu1:2}.  
For $ |\beta |\ge 2m+1$, we argue as follows. By~\eqref{fEstDerv1}, $  | D^\beta (  |v|^\rho  ) | $ is estimated by a sum of terms of the form
\begin{equation*} 
\Term =  |v|^\rho  \prod _{ \ell =1 }^{  |\beta |}    \frac {| D^{\beta _\ell } v |} { |v|}  
\end{equation*} 
where  $\sum_{ \ell=1 }^{ |\beta |} \beta _\ell =\beta $. 
If all the $\beta _\ell$ satisfy $ |\beta _\ell | \le 2m$, then we can argue as above, and we obtain $   \| \langle \cdot \rangle ^{ \eta } \Term \| _{ L^2  } \le C K^{ \rho + 2  |\beta |} $. 
Suppose now one of the derivatives in $\Term $ has order $\ge 2m+1$, for instance $ |\beta _1|\ge 2m+1$. Then $ |\beta _\ell |\le 2m$ for all $2\le \ell \le  |\beta |$. Indeed, $\sum_{ \ell=1 }^{ |\beta |}  |\beta _\ell| = |\beta |$;  and 
for $\ell \ge 2$, 
\begin{equation*} 
 |\beta _\ell |\le  |\beta |-  |\beta _1| \le  |\beta |- 2m -1 \le J- 2m-1 \le  2m ,
\end{equation*} 
by~\eqref{def-J} and the last inequality in~\eqref{def-knm}. Therefore, we obtain
\begin{equation*} 
\Term \le K^{\alpha + 2  |\beta |-1 } \langle x\rangle ^{- n\rho } \langle x\rangle ^n | D^{\beta _1} v | ,
\end{equation*} 
Using now the second inequality in~\eqref{elemu1:3:b1}, we deduce that
\begin{equation*} 
\langle x\rangle ^\eta  \Term \le \langle x\rangle ^{- n + \MDb (  |\beta | ) + n\rho}  \Term \le K^{\alpha + 2  |\beta |-1 } \langle x\rangle ^{ \MDb (  |\beta | ) } | D^{\beta _1} v | .
\end{equation*} 
Now $ | \beta _1| \le  |\beta |$, so that $\MDb (  |\beta |) \le \MDb (  |\beta _1|) $; hence  
\begin{equation*} 
  \| \langle \cdot \rangle ^{\eta  } \Term \| _{ L^2 } \le  K^{ \rho + 2  |\beta |- 1} \| \langle \cdot \rangle ^{ \MDb (  | \beta_1 | )}D^{\beta _1} v \| _{ L^2  }  \le  K^{ \rho + 2  |\beta |} .  
\end{equation*} 
  This completes the proof.
\end{proof} 

\section{a priori estimates for~\eqref{NLS-1}} \label{sAPriori} 
In this section, we prove a priori estimates for certain solutions of~\eqref{NLS-1}. These estimates are an essential ingredient in the proof of our main theorem. 
We assume~\eqref{Conditionalpha} and~\eqref{fCondonk}--\eqref{fCondonm}. 
Since $\alpha <\frac {2} {N}$ by~\eqref{Conditionalpha}, it follows from the first inequality in~\eqref{fCondonn:1} that
\begin{equation} \label{fCondonn:2}
n > \max \Bigl\{ \frac {5N} {2} ,  \frac { 2 N} {\alpha } \Bigr\}. 
\end{equation}  
By the second inequality in~\eqref{fCondonn:2}, $\frac {N (2-N\alpha) } { n\alpha } < \frac{ 2 - N\alpha} {2}$; by the first inequality in~\eqref{fCondonn:1}, $\frac {N (2-N\alpha) } {n\alpha } <  \frac{N\alpha}{10}$; by the second inequality in~\eqref{fCondonn:1}, $ \frac {N(2-N\alpha )} { n\alpha } \le \frac {1} {k+4}$; and by the last inequality in~\eqref{fCondonn:1}, $\frac {N (2-N\alpha) } {n\alpha } <  \frac {(N+2) \alpha -2 } {2\alpha (k+2)}$.
Therefore, we may fix $\sigma $ satisfying
\begin{equation} \label{fSupplN2:b1} 
\frac {N (2-N\alpha) } {n\alpha } <  \sigma<\min \Bigl\{ \frac {N\alpha} {10} , \frac{ 2 - N\alpha} {2},  \frac {1} {k+4} ,   \frac {(N+2) \alpha -2 } {2\alpha (k+2)}\Bigr\} .
\end{equation} 
In particular, it follows from the first inequality in~\eqref{fSupplN2:b1} that 
\begin{equation} \label{fSupplN3:b3} 
\frac {n\alpha \sigma } {2- N \alpha } > N . 
\end{equation} 
Moreover, it follows from the third inequality in~\eqref{fSupplN2:b1} that 
\begin{equation} \label{fSupplN3} 
0 < 1 - \frac {2\sigma } {2- N\alpha } < 1 ,
\end{equation} 
and from the second inequality in~\eqref{fSupplN2:b1} that
\begin{equation} \label{fSupplN3:b1} 
1 - \frac   {2- N\alpha } 2 - 5 \sigma >0.
\end{equation} 
Also, it follows from the second inequality in~\eqref{fCondonm} and~\eqref{fSupplN3:b3} that
\begin{equation} \label{fSupplN1:b2} 
m >   \frac {5  |\lambda | (1 + \alpha  | \Im \lambda |) } {  |\Im \lambda | \sigma }  .
\end{equation} 
Next, we  introduce the following notation. Let
\begin{equation} \label{de-sigma}
\sigma _j =
\begin{cases} 
j\sigma & 0\le j\le 2m \\
(j+1)\sigma & j=2m+1,\\
(j+2)\sigma & 2m+2\leq j\leq J-2,\\
(j+3)\sigma & j=J-1,\\
(j+4)\sigma & j=J .
\end{cases} 
\end{equation} 
In particular, if $ j \ge J-2$, then $\sigma _j \ge j \sigma \ge 2m \sigma $. 
Using~\eqref{fSupplN1:b2},
we deduce that
\begin{equation} \label{de-sigma:b1}
 \frac {10  |\lambda | (1 + \alpha | \Im \lambda |) } {  |\Im \lambda | \sigma  _j } \le 1 , \quad J-2\le j\le J.
\end{equation} 
Given $ \ell \in \N $, we set
\begin{align}
\Vert v\Vert_{1, \ell }& =\sup\limits_{0\leq|\beta|\leq  \ell }\Vert\langle \cdot \rangle
^{n}D^{\beta}v\Vert_{L^{\infty}},\  &  \text{ if}\ 0\leq  \ell \leq2m,\label{1l}\\
\Vert v\Vert_{2, \ell }& =\sup\limits_{2m+1\leq|\beta|\leq  \ell }\Vert\langle
\cdot \rangle^{\MDb (  |\beta | ) }D^{\beta}v\Vert_{L^{2}},\  &  \text{ if}\ 2m+1\leq  \ell \leq
J   ,\label{2l}
\end{align}
where $  \MDb ( \cdot  ) $ is defined by~\eqref{fDfnMdb}.
Let $0<T\le \frac{1}{b}$ and $v\in C([0,T), \Spa )$ satisfy
\begin{equation} \label{inf:b1}
\inf _{0\leq s\leq t}\inf _ {x\in \R^N } (\langle x\rangle^{n}|v( s ,x)|)>0,\quad  \text{for all}\quad 0\le t<T. 
\end{equation}
Given $0\le t < T$, we define
\begin{align} 
\Phi_{1, t } & =\sup _ {0\leq s\leq t }\sup _{0\leq j\leq2m}  (1-b s)^{\sigma_{j}}\Vert v (s) \Vert_{1,j}, \label{fDfnPhi1} \\
\Phi_{2, t} & =\sup _{0\leq s\leq t}\sup _{2m+1\leq j\leq J }(1-b s)^{\sigma_{j}}\Vert v (s) \Vert_{2,j}, \label{fDfnPhi2} \\
\Phi_{3, t} & =\sup_{0\leq s\leq t}\frac{ (1-bs)^{\frac{2-N\alpha}{2\alpha} }}{\inf _{x\in \R^N }\langle x\rangle^{n}|v(s,x)|}\label{fDfnPhi3} \\
\Phi_{4, t} &= \sup _{0\leq s\leq t }\sup_{j\leq2m +2 } \Bigl(  (1-bs)^{\sigma
_{j}}\sup_{ \vert \beta \vert =j} \Bigl\Vert \frac{D^{\beta}\,v (s) } 
{|v (s) |} \Bigr\Vert _{L^{\infty}} \Bigr)  ,\label{fDfnPhi6} 
\end{align} 
and we set
\begin{equation} \label{fDfnPhi7} 
\Phi_{t}=\max\{\Phi_{1,t},\Phi_{2,t} \}
\end{equation} 
and
\begin{equation} \label{fDfnPhi8} 
\Psi_{t}=\max\{\Phi_{t},\Phi_{3,t},\Phi_{4,t} \}.
\end{equation} 
Note that the norms in the definition of $\Phi_{4,t}$ are finite by~\eqref{feCase2m2:1}.

\begin{lem} \label{eContPsi} 
Assume~\eqref{Conditionalpha}, \eqref{fCondonk}--\eqref{fCondonm} and~\eqref{fSupplN2:b1}, and let $\Spa $ be defined by~\eqref{n27}. 
Let $v_{0}\in \Spa $ satisfy
\begin{equation}  \label{IV:b1}
\Vert v_0 \Vert_ \Spa + \Bigl(  \inf\limits_{x\in \R^N  }\langle x\rangle^{n}|v_0 (x)| \Bigr)  ^{-1}\leq K, 
\end{equation}
 for some $K\ge 1$, let $0<T \le  \frac {1} {b}$, let $v\in C([0,T), \Spa )$ satisfy~\eqref{inf:b1} and $v(0)= v_0$. With the notation~\eqref{1l}--\eqref{fDfnPhi8}, it follows that $\Psi _t$ is a continuous function of $t\in [0,T)$, and that 
\begin{equation} \label{feContPsi} 
\Psi _0 \le K + \CSTnv  K^2, 
\end{equation} 
where the constant $\CSTnv$ depends only on $N, k, n, m$. 
\end{lem}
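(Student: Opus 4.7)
\quad The initial bound $\Psi_0\le K+\CSTnv K^{2}$ is immediate from the definitions. At $t=0$ the factor $(1-bs)^{\sigma_j}$ equals $1$, so by~\eqref{1l}--\eqref{fDfnPhi2} both $\Phi_{1,0}$ and $\Phi_{2,0}$ are dominated by $\|v_0\|_\Spa$, hence by~$K$ via~\eqref{IV:b1}; likewise $\Phi_{3,0}=\bigl(\inf_x \langle x\rangle^n|v_0(x)|\bigr)^{-1}\le K$. The only piece not immediately controlled by $K$ is $\Phi_{4,0}$, and this is precisely where Lemma~\ref{eCase2m2} applies: for $|\beta|\le 2m+2$ it gives $\|D^\beta v_0/|v_0|\|_{L^\infty}\le CK^{2}$ with $C=C(N,k,n,m)$. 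Combining and absorbing yields $\Psi_0\le \max\{K,CK^{2}\}\le K+\CSTnv K^{2}$.

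The main work is the continuity of $\Psi_t$ on $[0,T)$. Since maximum and supremum over a finite index set preserve continuity, and since $t\mapsto \sup_{s\in[0,t]}g(s)$ is continuous whenever $g\in C([0,T))$, it suffices to verify that for each fixed $j$ and each multi-index $\beta$ the underlying ``integrand'' in~\eqref{fDfnPhi1}--\eqref{fDfnPhi6} is a continuous function of $s\in[0,T)$. For the terms composing $\Phi_{1,t}$ and $\Phi_{2,t}$ this is immediate from $v\in C([0,T),\Spa)$ combined with continuity of $s\mapsto (1-bs)^{\sigma_j}$. For $\Phi_{3,t}$, I will prove continuity of $L(s):=\inf_x\langle x\rangle^n|v(s,x)|$: the elementary estimate
\begin{equation*}
|L(s)-L(s')|\le \|\langle\cdot\rangle^n(v(s)-v(s'))\|_{L^\infty}
\end{equation*}
has its right-hand side tending to $0$ by $\Spa$-continuity, and combined with the uniform positivity of $L$ from~\eqref{inf:b1} this gives continuity of $1/L$ on any compact subinterval of $[0,T)$, hence of $\Phi_{3,t}$.

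The last and slightly more delicate piece is $\Phi_{4,t}$. For each $|\beta|\le 2m+2$, I will show that $s\mapsto \|D^\beta v(s)/|v(s)|\|_{L^\infty}$ is continuous by expanding
\begin{equation*}
\frac{D^\beta v(s,x)}{v(s,x)}-\frac{D^\beta v(s',x)}{v(s',x)}=\frac{D^\beta v(s,x)-D^\beta v(s',x)}{v(s,x)}+\frac{D^\beta v(s',x)\bigl(v(s',x)-v(s,x)\bigr)}{v(s,x)\,v(s',x)}
\end{equation*}
and estimating each summand uniformly in $x$. The pointwise bound $|v(r,x)|\ge \langle x\rangle^{-n}L(r)$ absorbs the denominators, while the numerators are controlled by the weighted $L^\infty$-norms built into $\Spa$ when $|\beta|\le 2m$, and via the Sobolev embedding~\eqref{fEstSob1} (as in the proof of Lemma~\ref{eCase2m2}) when $2m+1\le|\beta|\le 2m+2$. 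Combined with the $\Spa$-continuity of $v$ and the continuity and positivity of $L$, both summands tend to $0$ in $L^\infty(\R^N)$ as $s'\to s$, so $\|D^\beta v(s)/|v(s)|\|_{L^\infty}$ is continuous, and hence so is $\Phi_{4,t}$. The main obstacle, if any, is the bookkeeping between weights and Sobolev embedding at the top range $|\beta|\in\{2m+1,2m+2\}$, but all the necessary ingredients are already assembled in the preliminary lemmas.
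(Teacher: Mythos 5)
Your proof is correct and follows essentially the same route as the paper's: both reduce the problem to $K$-bounds and continuity of the individual $\Phi_j$'s, identify $\Phi_{4,0}$ as the one quantity requiring Lemma~\ref{eCase2m2}, and treat the continuity of $\Phi_{4,t}$ via the weights and the Sobolev embedding~\eqref{fEstSob1} when $2m+1\le|\beta|\le 2m+2$. The only difference is cosmetic: the paper writes $D^\beta v/|v|=\langle x\rangle^{n}D^\beta v\cdot(\langle x\rangle^{n}|v|)^{-1}$ and invokes continuity of each factor as an $L^\infty$-valued map, while you expand the difference quotient termwise and estimate each summand; these are the same estimate in different packaging. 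One small slip: in your telescoping identity the denominators should be $|v(s,x)|$ and $|v(s',x)|$ (as in the definition~\eqref{fDfnPhi6}), not $v(s,x)$ and $v(s',x)$; this is harmless since $\bigl||v(s')|-|v(s)|\bigr|\le|v(s')-v(s)|$, but worth correcting for precision.
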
 

\begin{proof} 
Since $  \| \cdot  \|  _{ j, \ell } \le  \| \cdot  \|_\Spa$, it follows that $\Phi  _{ j, t }$ is a continuous function of $t$ for $j=1, 2$ and $\Phi  _{ j,0 }\le  \| v_0  \|_\Spa \le K$. 
From~\eqref{inf:b1} and $v\in C([0,T), \Spa )$, it follows easily that $\Phi  _{ 3,t }$ is also a continuous function of $t$ and that $\Phi  _{ 3,0 } =  (  \inf _{x\in \R^N  }\langle x\rangle^{n}|v_0 (x)| )  ^{-1}\leq K$. 
For $\Phi  _{ 4,t }$ we write
\begin{equation*} 
\frac {D^\beta v} { |v|}= \frac {\langle x\rangle ^n D^\beta v} { \langle x\rangle ^n  |v|} .
\end{equation*} 
We observe that $ ( \langle x\rangle ^n  |v|)^{-1} $ is continuous $[0,T) \to L^\infty (\R^N ) $ (by~\eqref{inf:b1} and $v\in C([0,T], \Spa )$), and that $ \langle x\rangle ^n D^\beta v $ is continuous $[0,T) \to L^\infty (\R^N ) $ (by definition of $\Spa$ if $ |\beta |\le 2m$ and by~\eqref{fEstSob1} if $2m+1\le  |\beta |\le 2m+2$). Hence $\Phi  _{ 4,t }$ is also a continuous function of $t$.
Applying~\eqref{feCase2m2:1}, we see that $\Phi  _{ 4,0 }\le \CSTnv K^2$ where the constant $\CSTnv$ depends only on $N, k, n, m$.
\end{proof} 

The main result of this section is the following.

\begin{prop} \label{lem-Decay-v} 
Let $\lambda \in \C$ satisfy $\Im\lambda<0$. 
Assume~\eqref{Conditionalpha}, \eqref{fCondonk}--\eqref{fCondonm} and~\eqref{fSupplN2:b1}, and let $\Spa $ be defined by~\eqref{n27}. 
Let $b>0$, $K >1$, let $v_{0}\in \Spa $ satisfy~\eqref{IV:b1}, let $v\in C([0, \Tma ), \Spa )$ be the solution of~\eqref{NLS-1}
given by Proposition~$\ref{LWP}$, and let $\Psi $ be defined by~\eqref{1l}--\eqref{fDfnPhi8}. Given any $K_1 \geq K$, there exists $b_{0} > 1$ (which depends on $v$ through $K$ and $K_1$ only) such
that if $b\ge b_0$ and 
\begin{equation} \label{PSI}
\Psi_{T}\leq K_{1} 
\end{equation}
for some $0 < T < \Tma$, then
\begin{equation} \label{Decay-vmu}
 |v (t, x)| \le 2  |v_0 (x)|
\end{equation} 
and
\begin{equation} \label{Decay-v0}
|v(t,x)|^{\alpha}\leq \Bigl(  1+\frac{2-N\alpha}{2\alpha|\Im
\lambda|}\Bigr)  \min \{  2 K^\alpha  \langle x \rangle ^{-n\alpha },  bG (t) \}  
\end{equation}
on $[0,T] \times \R^N $, where
\begin{equation}  \label{defG}
G=G (  t;b,\alpha,N )  =\frac{(1-bt)^{\frac{2-N\alpha}{2}} 
}{1-(1-bt)^{\frac{2-N\alpha}{2}}}  = \frac {1} { (1-bt)^{ - \frac {2-N\alpha } {2} } -1} .
\end{equation} 
Moreover, there is a constant $C_{0}  >0$ (which depends on $v$ through $K$ and $K_1$ only and is independent of $b$ and $T$) such that if $b\geq b_{0}$, then for $0\le t\le T$,
\begin{equation} \label{est-v-alpha-L2} 
 \Vert|v (t) |^\alpha \Vert_{L^2}  \leq C_0 \min\{1,(bG (t) )^{1-\frac{2\sigma}{2-N\alpha}}\}, 
\end{equation} 
and
\begin{equation} \label{Decay-v}
\Vert \langle \cdot \rangle ^{\frac {2n\alpha \sigma } {2-N\alpha }}   D^{\beta} ( |v(t )|^{\alpha} ) \Vert_{L^{\infty}}   \leq C_{0}    \min \{  1, (  bG (t) ) ^{1-\frac{2\sigma}{2-N\alpha}} \} (1-bt)^{- (   \vert \beta \vert
-1 )  \sigma} , 
\end{equation} 
for all $1\leq \vert \beta \vert \leq 2m$; and
\begin{equation} \label{Decay-v2}
\begin{split} 
\Vert \langle \cdot  \rangle ^ { \MDb (  | \beta | )} & v(t) D^{\beta} ( |v(t )|^{\alpha} ) 
\Vert_{L^2 }   \\ & \leq C_0  (1-bt)^{-  ( |\beta | -1 ) \sigma  }  \min \{  1, (  bG (t) )
^{1-\frac{2\sigma}{2-N\alpha}} \}   , 
\end{split} 
\end{equation}
for all $2m+1 \le  |\beta | \leq J-2$.
\end{prop}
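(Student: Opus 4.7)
My plan is to establish the four groups of estimates in the order they appear, using the dissipative structure $\Im\lambda<0$ together with the hypothesis $\Psi_T\le K_1$.

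\emph{Step 1: pointwise ODE for $|v|^{-\alpha}$.} Multiplying~\eqref{NLS-1} by $\bar v$, taking imaginary parts, and dividing by $|v|^{\alpha+2}$, I obtain
\begin{equation*}
\partial_t(|v|^{-\alpha}) = \alpha\,|v|^{-\alpha}\,\Im(\Delta v/v) + \alpha|\Im\lambda|(1-bt)^{-(4-N\alpha)/2}.
\end{equation*}
Since $\Phi_{4,t}\le K_1$ applied at $|\beta|=2$ yields $|\Delta v/v|\le K_1(1-bt)^{-2\sigma}$ (as $\sigma_2=2\sigma$), this becomes a differential inequality. With integrating factor $\mu(t)=\exp(\alpha K_1\int_0^t(1-bs)^{-2\sigma}ds)$, bounded by $\exp(\alpha K_1/(b(1-2\sigma)))$ and tending to $1$ as $b\to\infty$ because $2\sigma<1$, I integrate to get $\mu(t)|v(t)|^{-\alpha}\ge |v_0|^{-\alpha}+\frac{2\alpha|\Im\lambda|}{b(2-N\alpha)G(t)}$. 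Choosing $b_0$ large so that $\mu\le 1+\epsilon$ and dropping either term on the right-hand side gives~\eqref{Decay-vmu} and, using $|v_0|\le K\langle\cdot\rangle^{-n}$ from~\eqref{IV:b1}, also~\eqref{Decay-v0}.

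\emph{Step 2: $L^2$ bound on $|v|^\alpha$.} From~\eqref{Decay-v0}, $|v|^\alpha\le C\min\{\langle x\rangle^{-n\alpha},bG(t)\}$, so $\int|v|^{2\alpha}$ splits naturally at the crossover $\langle x\rangle=(bG)^{-1/(n\alpha)}$, yielding $\||v|^\alpha\|_{L^2}\le C\min\{1,(bG)^{1-N/(2n\alpha)}\}$. The first inequality in~\eqref{fSupplN2:b1} is precisely $N/(2n\alpha)<2\sigma/(2-N\alpha)$, and this upgrades the exponent to give~\eqref{est-v-alpha-L2}.

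\emph{Step 3: derivative bounds.} For both~\eqref{Decay-v} and~\eqref{Decay-v2}, Lemma~\ref{eRemsb1} expresses $|D^\beta(|v|^\alpha)|$ as $|v|^\alpha$ times a combination of $|D^\beta v|/|v|$ and products $\prod_{\ell=1}^{|\beta|}|D^{\beta_\ell}v|/|v|$ with $\sum|\beta_\ell|=|\beta|$ and $|\beta_\ell|\le|\beta|-1$. For~\eqref{Decay-v} (range $1\le|\beta|\le 2m$) every $|\beta_\ell|\le 2m-1\le 2m+2$, so $\Phi_{4,t}\le K_1$ controls each factor in $L^\infty$ by $K_1(1-bt)^{-|\beta_\ell|\sigma}$. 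Interpolating the two bounds of~\eqref{Decay-v0} with weight $\theta=2\sigma/(2-N\alpha)\in(0,1)$ gives $|v|^\alpha\le C\langle\cdot\rangle^{-2n\alpha\sigma/(2-N\alpha)}(bG)^{1-2\sigma/(2-N\alpha)}$, and combining with the product bound and absorbing one factor of $(1-bt)^{-\sigma}$ against the residual $bG$-decay yields~\eqref{Decay-v}. For~\eqref{Decay-v2} (range $2m+1\le|\beta|\le J-2$), the condition $m>(k+n+1)/2$ from~\eqref{fCondonm} forces at most one factor to satisfy $|\beta_\ell|\ge 2m+1$, since otherwise $\sum|\beta_\ell|\ge 4m+2>J-2\ge|\beta|$. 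That single high-order factor is controlled in $L^2$ by $\Phi_{2,t}$ with weight $\langle\cdot\rangle^{\MDb(|\beta_\ell|)}$, the remaining low-order factors in $L^\infty$ by $\Phi_{4,t}$, and the extra factor of $v$ in the norm supplies the decay $\langle\cdot\rangle^{-n}$ via $\Phi_{1,t}$ needed to match the weight $\langle\cdot\rangle^{\MDb(|\beta|)}$. H\"older assembles the bound, with the hypotheses~\eqref{elemu1:3:b1} of Lemma~\ref{elemu1} verified from~\eqref{fCondonn:1}.

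\emph{Main obstacle.} The delicate point is obtaining the sharp exponent $-(|\beta|-1)\sigma$ rather than the naive $-|\beta|\sigma$ that one reads off from $\sum|\beta_\ell|=|\beta|$ in the Fa\`a di Bruno decomposition. Recovering the missing $(1-bt)^\sigma$ requires trading one factor of $(1-bt)^{-\sigma}$ coming from $|D^{\beta_\ell}v|/|v|$ against part of the weight $(bG)^{1-2\sigma/(2-N\alpha)}$ provided by the pointwise bound $|v|^\alpha\le CbG$. Tracking this absorption carefully through both regimes $bG\gtrless 1$ (which determines which branch of the $\min$ is active) and across the two different norms in~\eqref{Decay-v} and~\eqref{Decay-v2} is the main technical work; making $b_0$ large enough to render $\mu$ close to $1$ and to absorb the constants arising from $K$ and $K_1$ uniformly in $b$ is a secondary bookkeeping issue.
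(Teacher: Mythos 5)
Your Steps 1 and 2 are fine: the integrating-factor argument for $|v|^{-\alpha}$ is a mild variant of the paper's exact identity~\eqref{eq-v-3}--\eqref{eq-v-4}, and the $L^2$ bound proceeds as in the paper. The genuine gap is in Step~3, and it is exactly the one your ``Main obstacle'' paragraph identifies but does not actually resolve. If you expand $D^\beta(|v(t)|^\alpha)$ via Lemma~\ref{eRemsb1} applied to $v(t)$ and bound every factor $|D^{\beta_\ell}v|/|v|$ by $\Phi_{4,T}\le K_1$, then, since $\sum_\ell|\beta_\ell|=|\beta|$, the product of time weights is $(1-bt)^{-|\beta|\sigma}$. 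Interpolating~\eqref{Decay-v0} exhausts the weight $\langle x\rangle^{\frac{2n\alpha\sigma}{2-N\alpha}}$ and produces exactly the factor $\min\{1,(bG)^{1-\frac{2\sigma}{2-N\alpha}}\}$ already present in~\eqref{Decay-v}; there is no residue left to trade. And shaving $\epsilon$ off the $bG$-exponent to recover $(1-bt)^\sigma$ via $bG\lesssim b(1-bt)^{\frac{2-N\alpha}{2}}$ leaves behind a factor $b^{\epsilon}$, which grows as $b\to\infty$ and ruins the required uniformity of $C_0$ in $b$. So this route provably stalls one power of $(1-bt)^{-\sigma}$ short of~\eqref{Decay-v} (and likewise for~\eqref{Decay-v2}).

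What you are missing is that the paper never applies Fa\`a di Bruno to $|v(t)|^\alpha$ viewed as a function of $v(t)$. It first integrates the pointwise ODE in $t$ to get the exact quotient $|v(t,x)|^\alpha=|v_0(x)|^\alpha/\Jdtx(t,x)$, with $\Jdtx=1+f+\frac{2\alpha|\Im\lambda|}{bG(2-N\alpha)}|v_0|^\alpha$ and $f(t,x)=-\alpha|v_0|^\alpha\int_0^t|v|^{-\alpha-1}L\,ds$, and then differentiates that quotient. The point is structural: $D^\beta(|v_0|^\alpha)/\Jdtx$ carries no $(1-bt)^{-\sigma}$ at all, and the factors $D^{\gamma_j}\Jdtx/\Jdtx$ split into a $|v_0|^\alpha$-part (again time-independent) and a $D^{\gamma_j}f$-part which is an integral over $[0,t]$. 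Bounding the integrand by $(1-bs)^{-\frac{2-N\alpha}{2}-5\sigma-(|\gamma_j|-1)\sigma}$ and using $\frac{2-N\alpha}{2}+5\sigma<1$ from~\eqref{fSupplN3:b1}, the integration itself yields $\frac{C}{b}(1-bt)^{-(|\gamma_j|-1)\sigma}$: it is the extra time integration hidden inside $f$, not an interpolation on $bG$, that buys back the missing power of $(1-bt)^{-\sigma}$ per factor, with a harmless $1/b$ rather than a harmful $b^\epsilon$. You therefore need to replace the integrating-factor shortcut by the exact identity, keep $\Jdtx$ and $f$ explicitly, and run all the derivative estimates through that structure; the direct expansion of $|v|^\alpha$ in $v$ cannot reach the stated rate.
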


\begin{cor} \label{cor-Decay-v}
Under the assumptions of Proposition~$\ref{lem-Decay-v}$, it follows that 
\begin{equation} \label{Decay-v1}
\begin{split} 
\Vert \langle \cdot  \rangle ^ { \MDb (  | \beta | )}  D^{\beta} & ( |v(t )|^{\alpha} ) D^\gamma v (t)
\Vert_{L^2 }  \\ & \leq C_0  \min \{  1, (  bG (t) )
^{1-\frac{2\sigma}{2-N\alpha}} \}  (1-bt)^{ - (  |\beta | + |\gamma | -1 ) \sigma } , 
\end{split} 
\end{equation}
for all $0\le t\le T$, $2m+1 \le  |\beta | \leq J-2$ and $0\le  |\gamma |\le 2m +2$.
\end{cor}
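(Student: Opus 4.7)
The plan is to deduce the corollary directly from estimate~\eqref{Decay-v2} in Proposition~\ref{lem-Decay-v} by a Hölder splitting in which the factor $D^\gamma v$ is absorbed into the $L^\infty$ bound on $D^\gamma v / v$ provided by $\Phi_{4,t}$. This is the natural strategy because the right-hand side of~\eqref{Decay-v1} differs from~\eqref{Decay-v2} precisely by the extra factor $(1-bt)^{-|\gamma|\sigma}$, which matches the growth rate encoded in $\Phi_{4,t}$.

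The first step is the pointwise identity
\begin{equation*}
|D^\beta(|v|^\alpha) D^\gamma v| \;=\; |v\, D^\beta(|v|^\alpha)| \cdot \frac{|D^\gamma v|}{|v|},
\end{equation*}
which is legitimate because the solution $v$ constructed by Proposition~\ref{LWP} satisfies $\inf_{x}\langle x\rangle^n |v(t,x)|>0$, in particular $v(t,x)\neq 0$. Multiplying by the weight $\langle x\rangle^{\MDb(|\beta|)}$ and applying Hölder's inequality yields
\begin{equation*}
\|\langle \cdot \rangle^{\MDb(|\beta|)} D^\beta(|v|^\alpha) D^\gamma v\|_{L^2} \;\leq\; \|\langle \cdot \rangle^{\MDb(|\beta|)} v D^\beta(|v|^\alpha)\|_{L^2} \cdot \Bigl\|\frac{D^\gamma v}{|v|}\Bigr\|_{L^\infty}.
\end{equation*}

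The second step is to plug in the existing bounds. The first factor is estimated by~\eqref{Decay-v2}, giving $C_0 (1-bt)^{-(|\beta|-1)\sigma}\min\{1,(bG(t))^{1-\frac{2\sigma}{2-N\alpha}}\}$. For the second factor, since $|\gamma| \leq 2m+2$, the definition~\eqref{fDfnPhi6} of $\Phi_{4,t}$ together with the standing hypothesis~\eqref{PSI} gives
\begin{equation*}
\Bigl\|\frac{D^\gamma v(t)}{|v(t)|}\Bigr\|_{L^\infty} \;\leq\; \Phi_{4,t}\,(1-bt)^{-\sigma_{|\gamma|}} \;\leq\; K_1\,(1-bt)^{-\sigma_{|\gamma|}}.
\end{equation*}
For $|\gamma|\leq 2m$, \eqref{de-sigma} gives $\sigma_{|\gamma|}=|\gamma|\sigma$, so the product of the two bounds is exactly $C_0 K_1 (1-bt)^{-(|\beta|+|\gamma|-1)\sigma}\min\{1,(bG(t))^{1-\frac{2\sigma}{2-N\alpha}}\}$, which is the desired inequality after absorbing $K_1$ into the constant $C_0$.

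The only mild obstacle I anticipate is the two exceptional cases $|\gamma|\in\{2m+1,2m+2\}$, where $\sigma_{|\gamma|}=(|\gamma|+1)\sigma$ and $(|\gamma|+2)\sigma$ exceed $|\gamma|\sigma$. In those cases, the naive product gives a loss of at most $(1-bt)^{-2\sigma}$ compared to the claim. This loss can be absorbed by trading a small amount of the decay: one uses that $\sigma_{|\beta|}\geq|\beta|\sigma+2\sigma$ for $|\beta|\geq 2m+2$ (so there is slack in~\eqref{Decay-v2}), or alternatively one bounds $\|D^\gamma v/|v|\|_{L^\infty}$ for $|\gamma|\in\{2m+1,2m+2\}$ directly by Lemma~\ref{eCase2m2} up to $\|v\|_{\Spa}\leq K_1$, eliminating the $(1-bt)^{-\sigma_{|\gamma|}}$ factor altogether. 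Either way the claim follows after adjusting $C_0$. The rest is pure bookkeeping.
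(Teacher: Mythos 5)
Your core argument is exactly the paper's: write $D^\beta(|v|^\alpha)D^\gamma v = v\, D^\beta(|v|^\alpha)\cdot(D^\gamma v/v)$, apply~\eqref{Decay-v2} to the first factor in weighted $L^2$ and the $\Phi_{4,t}$-bound from~\eqref{PSI} and~\eqref{fDfnPhi6} to the second factor in $L^\infty$, and multiply. For $|\gamma|\le 2m$ this closes exactly as claimed, since $\sigma_{|\gamma|}=|\gamma|\sigma$ there. The paper's proof simply writes $\Vert D^\gamma v/|v|\Vert_{L^\infty}\le K_1(1-bt)^{-|\gamma|\sigma}$ without distinguishing $|\gamma|\in\{2m+1,2m+2\}$, so your unease about those two cases is well-founded; you are flagging a genuine imprecision in the published argument. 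It is harmless in practice because every subsequent use of~\eqref{Decay-v1} (in Step~2 of the proof of Proposition~\ref{BD-GWP}) has $|\gamma|\le 2m$.

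However, both of the patches you propose for $|\gamma|\in\{2m+1,2m+2\}$ fail. First, the decay exponent in~\eqref{Decay-v2} is $(|\beta|-1)\sigma$, not $\sigma_{|\beta|}$; the fact that $\sigma_{|\beta|}=(|\beta|+2)\sigma$ for $2m+2\le|\beta|\le J-2$ provides no slack in~\eqref{Decay-v2} with which to absorb the surplus $(1-bt)^{-2\sigma}$. Second, Lemma~\ref{eCase2m2} cannot be invoked for $v(t)$ at times $t>0$: its hypothesis~\eqref{IV} asks for a bound on $\Vert v(t)\Vert_{\Spa}+(\inf_{x}\langle x\rangle^{n}|v(t,x)|)^{-1}$, which is only assumed at $t=0$ via~\eqref{IV:b1}, while the a~priori control~\eqref{PSI} bounds only the time-weighted norms in $\Psi_T$ and allows both $\Vert v(t)\Vert_{\Spa}$ and $(\inf_{x}\langle x\rangle^{n}|v(t,x)|)^{-1}$ to diverge as $t\uparrow 1/b$. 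So one cannot ``eliminate the $(1-bt)^{-\sigma_{|\gamma|}}$ factor altogether'' this way. The honest record of what the split actually yields is~\eqref{Decay-v1} for $|\gamma|\le 2m$, together with the slightly weaker rate $(1-bt)^{-(|\beta|-1)\sigma-\sigma_{|\gamma|}}$ for $|\gamma|\in\{2m+1,2m+2\}$ --- which, again, is all that is ever needed downstream.
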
 

\begin{proof} 
We have
\begin{equation*} 
\Vert \langle \cdot  \rangle ^ { \MDb (  | \beta | )}  D^{\beta} ( |v(t )|^{\alpha} ) D^\gamma v (t) \Vert_{L^2 } \le 
\Vert \langle \cdot  \rangle ^ { \MDb (  | \beta | )}  D^{\beta} ( |v(t )|^{\alpha} )  v (t) \Vert_{L^2 }  \Bigl\| \frac {D^\gamma v (t) } { |v(t)|} \Bigr\| _{ L^\infty  }.
\end{equation*} 
Since
\begin{equation*} 
 \Bigl\| \frac {D^\gamma v (t) } { |v(t)|} \Bigr\| _{ L^\infty  } \le K_1  (1-bt)^{ -  |\gamma |\sigma },
\end{equation*} 
by~\eqref{PSI} and~\eqref{fDfnPhi6}, estimate~\eqref{Decay-v1} follows by applying~\eqref{Decay-v2}.  
\end{proof} 

\begin{proof} [Proof of Proposition~$\ref{lem-Decay-v}$]
In the estimates that follow, we denote by $C_0 >0$
a constant depending possibly on $\beta,\alpha,N,K,K_{1},\lambda$, etc, but not on $b$, $v$, $T$ and $\Tma$, whose exact
value is irrelevant and can change from line to line. 
We consider $b\ge 1$ and we proceed in several steps.

\Step1 Proof of~\eqref{Decay-v0}.\quad  
From equation
\eqref{NLS-1} it follows that 
\begin{equation} \label{eq-v-1}
\partial _t |v|=L+\Im\lambda(1-bt)^{-\frac{4-N\alpha}{2}}|v|^{\alpha+1},
\end{equation}
where 
\begin{equation} \label{eq-v-1:b1}
L(t,x)=- \frac{\Im(\bar{v} (t,x) \Delta v (t, x) )}{|v (t, x)|} .
\end{equation}
Multiplying both sides
of~\eqref{eq-v-1} by $|v|^{-\alpha-1}$ we see that
\begin{equation} \label{eq-v-2}
-\frac{1}{\alpha}\frac{\partial}{\partial t}|v|^{-\alpha}=|v|^{-\alpha-1}%
L+\Im\lambda(1-bt)^{-\frac{4-N\alpha}{2}}. 
\end{equation}
Let $0<t\leq T $.  Integrating~\eqref{eq-v-2} in $t$ we obtain 
\begin{equation} \label{eq-v-3}
\begin{split} 
\frac{1}{|v(t,x)|^{\alpha}}    =& \frac{1}{|v_{0}(x)|^{\alpha}}+\frac
{2\alpha| \Im\lambda|}{b (  2-N\alpha)  }[(1-bt)^{-\frac{2-N\alpha
}{2}}-1]\\
&  -\alpha\int_{0}^{t}|v(s,x)|^{-\alpha-1}L(s,x)\,ds.
\end{split} 
\end{equation} 
it follows that
\begin{equation}  \label{eq-v-4}
|v(t,x)|^{\alpha}= \frac { |v_0 (x)|^\alpha } {\Jdtx (t,x) }  ,
\end{equation}
where
\begin{equation} \label{eq-v-4:b3}
\Jdtx(t,x) = 1+f(t,x)+\frac{2\alpha
|\Im\lambda|}{b G(t) (  2-N\alpha )  }|v_{0} (x) |^{\alpha}  
\end{equation}  
with 
\begin{equation}  \label{eq-v-4:b2}
f(t,x)=-\alpha|v_{0}(x)|^{\alpha}\int_{0}^{t}|v(s,x)|^{-\alpha-1}L(s,x)\,ds.
\end{equation} 
By~\eqref{PSI} and~\eqref{fDfnPhi3} we have
\begin{equation} \label{dis68}
\frac{1}{\langle x\rangle^{\alpha n}|v(t,x)|^{\alpha}}\leq K_{1}^{\alpha
}(1-bt)^{-\frac{2-N\alpha}{2}}
\end{equation}
and by~\eqref{PSI}, \eqref{fDfnPhi6} and~\eqref{de-sigma},
\begin{equation}  \label{dis68:b1}
\frac{ \vert \Delta v(t,x) \vert }{ \vert v(t,x) \vert }\leq K_{1}(1-bt)^{-2\sigma}.
\end{equation} 
Using~\eqref{IV:b1}, \eqref{dis68}, \eqref{eq-v-1:b1} and~\eqref{dis68:b1} we obtain
\begin{equation} \label{fEstintf} 
\begin{split} 
|v_{0}(x)|^{\alpha}|v( s ,x)|^{-\alpha-1} \vert L( s ,x) \vert  &
= (  \langle x\rangle^{n}|v_{0}| )  ^{\alpha} (  \langle
x\rangle^{n}|v( s ,x)| )  ^{-\alpha}\frac{ \vert L( s ,x) \vert
}{ \vert v( s ,x) \vert }\\
&  \leq K^\alpha  K_{1}^{\alpha+1}(1-b s )^{-\frac{2-N\alpha}{2}-2\sigma} \\
&  \leq  K_{1}^{ 2 \alpha+1}(1-b s )^{-\frac{2-N\alpha}{2}-2\sigma},
\end{split} 
\end{equation} 
on $[0,T] \times \R^N $. 
Note that by the second inequality in~\eqref{fSupplN2:b1}, 
\begin{equation} \label{fdonaod} 
\frac {2-N\alpha } {2} + 2\sigma < 1, 
\end{equation} 
so that 
\begin{equation} \label{bd-f}
|f(t,x)|  \le \frac{\alpha K_{1}^{2\alpha+ 1 }}{b (  1-\frac{2-N\alpha}{2} 
-2\sigma )  }. 
\end{equation} 
We choose $b_{0} \ge 1$ sufficiently large so that
\begin{equation} \label{bd-f-1}
\frac{\alpha K_{1}^{2\alpha+ 1 }}{b_0  (  1-\frac{2-N\alpha}{2}-2\sigma )
}\leq \min \Bigl\{ \frac{1}{4}, \frac {2^\alpha -1} {2^\alpha +1} \Bigr\}, 
\end{equation}
and we deduce that if $b\ge b_0$, then
\begin{equation} \label{bd-f-1:b1}
 | f(t,x) | \le \min \Bigl\{ \frac{1}{4}, \frac {2^\alpha -1} {2^\alpha +1} \Bigr\}
\end{equation} 
on $[0, T] \times \R^N $. In particular, $1+f(t,x)\geq\frac{1}{2}$, so that 
\begin{equation} \label{bis12}
\begin{split} 
\displaystyle   \frac{ 1  }  {\Jdtx (t, x) } & \le \frac {2} {1 + \frac {4\alpha  |\Im \lambda |} {b G(t) (2-N\alpha )}  | v_0 (x) |^\alpha }
   \\  & \le  \min  \Bigl\{ 2, \frac {2-N \alpha } {2 \alpha
|\Im\lambda| \,  | v_0 (x)|^\alpha } bG (t) \Bigr\} .
\end{split} 
\end{equation} 
Applying~\eqref{eq-v-4}, \eqref{bis12}  and using~\eqref{IV:b1} we obtain
\begin{equation} \label{bis13}
|v(t,x)|^{\alpha}   \le 
 \min \Bigl\{ 2 K^{\alpha}\langle x \rangle ^{-n\alpha}
,\frac{ 2-N\alpha }{2 \alpha|\Im\lambda|} b G (t) \Bigr\}  , 
\end{equation} 
from which estimate~\eqref{Decay-v0} follows. 

\Step2 Proof of~\eqref{est-v-alpha-L2}. \quad 
Since $ \langle x\rangle ^{- n\alpha } \in L^2 (\R^N ) $ by the second inequality in~\eqref{fCondonn:2},  it follows from the first inequality in~\eqref{Decay-v0} that $ \Vert|v|^\alpha \Vert_{L^2}\leq C_0$. 
Moreover, it follows from~\eqref{Decay-v0} and $\frac {2\sigma } {2- N\alpha } \ge \frac {N} {2n\alpha }$ (by~\eqref{fSupplN3:b3}) that if $bG(t) \le 1$, then
\begin{equation*} 
\begin{split} 
 \Vert|v|^\alpha \Vert_{L^2} & \leq C_0 (\|\langle \cdot \rangle^{-n\alpha}\|_{L^2(  \langle x\rangle > (bG(t))^{- \frac {1} {n\alpha }}  )}+\|bG\|_{L^2(  \langle x\rangle < (bG(t))^{- \frac {1} {n\alpha }} ) } ) \\ &
\leq C_0 (bG (t) )^{1-\frac{N}{2n\alpha}}\leq C_0 (bG (t) )^{1-\frac{2\sigma}{2-N\alpha}},
\end{split} 
\end{equation*} 
hence~\eqref{est-v-alpha-L2} is proved.

\Step3 Further estimates of $v$ and $\Jdtx $ and proof of~\eqref{Decay-vmu}. \quad 
For any $x\in \R^N$ and $0\leq s\leq t\leq T$, 
it follows from~\eqref{eq-v-4} and $G(t)\leq G(s)$  that
\begin{equation*}
\Bigl\vert \frac {v(t,x)} {v(s,x)} \Bigr\vert ^\alpha = \frac{1+f(s,x)+\frac{2\alpha
|\Im\lambda||v_{0}|^{\alpha}}{ (  2-N\alpha )bG(s)  }}{1+f(t,x)+\frac{2\alpha
|\Im\lambda||v_{0}|^{\alpha}}{ (  2-N\alpha )bG(t)  }}
\le \frac{1+f(s,x)+\frac{2\alpha
|\Im\lambda||v_{0}|^{\alpha}}{ (  2-N\alpha )bG(s)  }}{1+f(t,x)+\frac{2\alpha
|\Im\lambda||v_{0}|^{\alpha}}{ (  2-N\alpha )bG( s )  }}.
\end{equation*}
Using~\eqref{bd-f-1:b1}, we deduce that
\begin{equation}\label{com-v-t-s}
\Bigl\vert \frac{v(t,x)}{v(s,x)} \Bigr\vert ^\alpha \leq \min \{ 2, 2^\alpha \}.
\end{equation}
This proves~\eqref{Decay-vmu}. 
Moreover, if $t\ge 0$ and $0 \le \nu \le 1$, then $\min \{ 1, t \}\le  \min \{ 1, t^\nu \}$. Thus it follows from~\eqref{bis12} and~\eqref{fSupplN3} that 
\begin{equation*}
  \frac{ 1  }  {\Jdtx (t, x) }  \le  C  \min \Bigl\{ 1 ,  \Bigl(  \frac {bG (t) } {  |v_0 |^\alpha } \Bigr)^{  1 - \frac {2\sigma } {2- N\alpha }  }  \Bigr\} .
\end{equation*} 
Since 
\begin{equation*} 
 \Bigl(  \frac {bG (t) } {  |v_0 |^\alpha } \Bigr)^{  1 - \frac {2\sigma } {2- N\alpha }  }  \le K^\alpha \langle x\rangle ^{n\alpha (  1 - \frac {2\sigma } {2- N\alpha } )} (bG(t))^{  1 - \frac {2\sigma } {2- N\alpha }}, 
\end{equation*} 
it follows that
\begin{equation} \label{fSupplN4}
  \frac{ 1  }  {\Jdtx (t, x) }  \le  C K^\alpha \langle x\rangle ^{n\alpha (  1 - \frac {2\sigma } {2- N\alpha } )} \min\{ 1, (bG(t))^{  1 - \frac {2\sigma } {2- N\alpha }} \} .
\end{equation} 

\Step4 We prove that if $ |\gamma |\le 2m +2 $ then 
\begin{equation} \label{fSupplN5} 
 \Bigl\| \langle \cdot \rangle ^{\frac {2n\alpha \sigma } {2-N\alpha }} \frac{D^\gamma  ( |v_{0}  |^{\alpha}) }{ \Jdtx (t, \cdot ) } \Bigr\| _{ L^\infty  }  \le  C_0 \min \{ 1,  (   bG (t)  )^{  1 - \frac {2\sigma } {2- N\alpha }  }  \} ,
\end{equation} 
for all $0\le t\le T$, and if $2m+1 \le  |\gamma |\le J-2$, then 
\begin{equation} \label{fSupplN8} 
 \Bigl\| \langle \cdot \rangle ^{ \MDb (  | \gamma | )}  v (t) \frac{D^\gamma  ( |v_{0}  |^{\alpha}) }{ \Jdtx (t, \cdot ) } \Bigr\| _{ L^2 } \le 
 C_0  \min\{ 1, (bG(t))^{  1 - \frac {2\sigma } {2- N\alpha }} \}  ,
\end{equation}  
for all $0\le t\le T$. 
Indeed, suppose first $ |\gamma |\le 2m + 2$. It follows from~\eqref{elemu1:2} that
\begin{equation} \label{fIntrm1:b2:2} 
 | D^\gamma (  | v_0 |^\alpha ) | \le  C \langle x \rangle ^{-\alpha n} K^{ \alpha + 2  |\gamma |}  .
\end{equation} 
Estimates~\eqref{fIntrm1:b2:2} and~\eqref{fSupplN4}  yield~\eqref{fSupplN5}.
Suppose now $  |\gamma  | \ge 2m +1$. We have 
\begin{equation} \label{fSupplN6} 
 \Bigl\| \langle \cdot \rangle ^{ \MDb (  | \gamma | )}  v (t) \frac{D^\gamma  ( |v_{0}  |^{\alpha}) }{ \Jdtx (t, \cdot ) } \Bigr\| _{ L^2 }
 \le  \|  \langle \cdot \rangle ^n v(t) \| _{ L^\infty  }  \Bigl\| \langle \cdot \rangle ^{ -n + \MDb (  | \gamma | ) }  \frac{D^\gamma  ( |v_{0}  |^{\alpha}) }{ \Jdtx (t, \cdot ) } \Bigr\| _{ L^2 } .
\end{equation} 
Using~\eqref{fSupplN4}, we see that  
\begin{equation} \label{fSupplN7} 
\begin{split} 
\Bigl\| \langle \cdot  \rangle ^{-n + \MDb (  | \gamma  | )}  & \frac{D^\gamma  ( |v_{0}  |^{\alpha}) }{ \Jdtx (t, \cdot ) } \Bigr\| _{ L^2 }  \\ & \le  C K^\alpha  \min\{ 1, (bG(t))^{  1 - \frac {2\sigma } {2- N\alpha }} \}  \| \langle \cdot \rangle ^{ \eta }  D^\gamma  (  |v_0|^\alpha  )  \| _{ L^2 } , 
\end{split} 
\end{equation} 
where
\begin{equation*} 
\eta = -n + \MDb (  | \gamma  | ) + n\alpha (  1 - \frac {2\sigma } {2- N\alpha } )
\end{equation*} 
Applying~\eqref{fSupplN3:b3}, we see that
\begin{equation*} 
\eta \le    n\alpha (  1 - \frac {2\sigma } {2- N\alpha } ) < n\alpha - \frac {N} {2}.
\end{equation*} 
Moreover,
\begin{equation*} 
\eta < -n + \MDb (  | \gamma  | ) + n\alpha .
\end{equation*} 
Therefore, we deduce from~\eqref{elemu1:3} that $ \| \langle \cdot \rangle ^{ \eta } D^\gamma  (  |v|^\alpha   ) \| _{ L^2  } \le C K^{ \alpha  + 2  |\gamma  |} $. Applying~\eqref{fSupplN6} and~\eqref{fSupplN7}, we conclude that~\eqref{fSupplN8} holds.

\Step5 Development of $D^{\beta } ( |v |^{\alpha})  $.\quad 
We  differentiate~\eqref{eq-v-4}. Given
$1\leq \vert \beta \vert \leq J-2 $, the development of $D^{\beta
} ( |v |^{\alpha})  $ contains the term
\begin{equation}
 \CSTu _1 = \frac{D^{\beta} ( |v_{0} (x) |^{\alpha}) }{ \Jdtx (t,x) }, \label{A} 
\end{equation}
and  terms of the form 
\begin{equation} \label{B} 
 \CSTu _2= \frac { D^{\rho} ( |v_{0} (x) |^{\alpha} ) } {\Jdtx (t,x ) }   \prod_{j=1}^{p} \frac{ D^{\gamma_{j} } \Jdtx (t,x )  }{ \Jdtx (t, x )  } 
\end{equation}
where
\begin{equation} \label{B:b1} 
\gamma+\rho=\beta,\quad1\leq p\leq|\gamma|,\quad|\gamma_{j}|\geq1,\quad
\sum_{j=1}^{p}\gamma_{j}=\gamma.
\end{equation} 
The term $  \CSTu _1 $ is estimated by~\eqref{fSupplN5} and~\eqref{fSupplN8}. Since $(1-bt)^{- (   \vert \beta \vert
-1 )  \sigma}\ge 1$, the contribution of $  \CSTu _1 $ satisfies estimates~\eqref{Decay-v} and~\eqref{Decay-v2}.

It remains to estimate the terms of the form $\CSTu _2$.
In view of~\eqref{eq-v-4:b3}, 
\begin{equation} \label{eq-v-4:b4}
\begin{split} 
\frac {D^{\gamma_{j}} \Jdtx(t,x) } { \Jdtx(t,x) } & = \frac{2\alpha
|\Im\lambda|}{b G (t)  (  2-N\alpha )  } \frac { D^{\gamma_{j} } ( |v_{0} (x) |^{\alpha} ) } { \Jdtx(t,x) } +  \frac {D^{\gamma_{j}} f(t,x) } { \Jdtx(t,x) }
\\ & = :  \CSTt _1^j + \CSTt _2^j .
\end{split} 
\end{equation}  

\Step6 Estimates of $\CSTt _1^j$. \quad 
Using~\eqref{bis12}, we see that
\begin{equation*} 
 | \CSTt _1^j | \le \frac {  | D^{\gamma_{j} } ( |v_{0} (x) |^{\alpha} ) | } {  |v_0 (x)|^\alpha }. 
\end{equation*} 
Applying~\eqref{elemu1:1} with $\rho =\alpha $, we obtain
\begin{equation} \label{fEstNBd1}
 \| \CSTt _1^j  \| _{ L^\infty  }  \le C_0
\end{equation} 
for $ | \gamma  _j|\le 2m +2 $. 

\Step7 Estimates of $\CSTt _2^j$. \quad 
Using~\eqref{eq-v-4:b2} and~\eqref{eq-v-1:b1}, we write
\begin{equation*} 
f(t,x) =  \alpha \int_{0}^{t}|v(s,x)|^{-\alpha-2 }  |v_{0}(x)|^{\alpha}  \Im(\bar{v} (s,x) \Delta v (s, x) ) \,ds , 
\end{equation*} 
and we see that the development of $D^{\gamma_{j}}f(t,x)$ is estimated by terms of the form
\begin{equation*} 
\int _0^t  |D^{\gamma  _{ j1 }} (  | v_0 |^\alpha  ) |  \, {  | D^{ \gamma  _{ j2 }} \Delta v | } \,  {  | D^{ \gamma  _{ j3  }}  v | }  \,  | D^{ \gamma  _{ j4 } } (  |v|^{-\alpha -2} ) | ,
\end{equation*} 
where $\gamma  _{ j1 } + \gamma  _{ j2 } +\gamma  _{ j3 } + \gamma  _{ j4 } = \gamma _j$. 
Using~\eqref{fEstDerv1} with $\beta $ replaced by $\gamma  _{ j3 }$ and $\rho $ replaced by $-\alpha -2$, we are led to estimate terms of the form
\begin{equation*} 
\int _0^t  |v|^{- \alpha } |D^{\gamma  _{ j1 }}  (  | v_0 |^\alpha  ) | \frac {  | D^{ \gamma  _{ j2 }} \Delta v | } {  | v | }   \frac {  | D^{ \gamma  _{ j3  }}  v | } {  | v | } \prod  _{ \ell =1 } ^{ |\gamma  _{ j4 }| } \frac {  | D^{ \beta _\ell } v | } {  | v | } ,
\end{equation*} 
where $\sum _{ \ell =1 } ^{ |\gamma  _{ j4 }| } \beta _\ell = \gamma  _{ j4 } $. Note that we can incorporate the term $ \frac {  | D^{ \gamma  _{ j3  }}  v | } {  | v | } $ into the product. Moreover, $ \frac {  | D^{ \beta _\ell } v | } {  | v | } =1 $ if $ | \beta _\ell |=0$, so we can incorporate as many such terms as we like into the product. Thus, we need only estimate terms of the form 
\begin{equation} \label{fNWestmm1} 
\CSTd = \frac {1} {\Jdtx(t, x) } \int _0^t  |v|^{- \alpha } |D^{\gamma  _{ j1 }}  (  | v_0 |^\alpha  ) | \frac {  | D^{ \gamma  _{ j2 }} \Delta v | } {  | v | }  \prod  _{ \ell =1 } ^{ |\gamma  _{j 3} | } \frac {  | D^{ \beta _\ell } v | } {  | v | } ,
\end{equation} 
where 
\begin{equation} \label{fNWestmm2} 
\gamma  _{ j1 } + \gamma  _{ j2 } +\gamma  _{ j3 }  = \gamma _j, \quad \sum_{ \ell =1 } ^{ | \gamma _{j 3} |} \beta _\ell = \gamma  _{ j3 } .
\end{equation} 
To estimate the terms $ \CSTd $,  we consider separately two cases. 
If a term $\CSTd$ contains only derivatives $ | \gamma  _{ jk } | \le 2m$ for $k=1,2$ and $ |\beta _\ell |\le 2m$ for $\ell \le  |\gamma  _{ j3 }|$, then we call such a term $ \CSTd ^{\mathrm{low}}$ and we estimate its  $L^\infty $ norm. 
If a term $\CSTd$ contains a derivative of order $ | \gamma  _{ jk }| \ge  2m +1$ or $ |\beta _\ell | \ge 2m+1$, then we call such a term $ \CSTd ^{\mathrm{high}}$ and we estimate its  $L^2 $ norm. 

Recall that $\frac {1} {\Jdtx} \le 2$ by~\eqref{bis12}. Writing
\begin{equation} \label{fNWestmm3} 
 |v|^{- \alpha } |D^{\gamma  _{ j1 }}  (  | v_0 |^\alpha  ) | = ( \langle x\rangle ^n   |v|) ^{- \alpha } | \langle x\rangle ^{n\alpha } D^{\gamma  _{ j1 }}  (  | v_0 |^\alpha  ) | ,
\end{equation} 
we deduce from~\eqref{elemu1:2}, \eqref{PSI}, \eqref{fDfnPhi3}, \eqref{fDfnPhi6}, and~\eqref{de-sigma} (recall that $ |\gamma  _{ j2 }|+ 2 \le 2m+2$) that
\begin{equation*} 
\begin{split} 
 \| \CSTd ^{\mathrm{low}} \| _{ L^\infty  } & \le C_0 \int _0^t (1- bs)^{ - \frac {2-N\alpha } {2} - \sigma  ( |\gamma  _{ j2 }| + 4) - \sigma \sum_{ \ell=1 }^{  |\gamma  _{ j3 }| }  |\beta _\ell | } ds \\ & = C_0 \int _0^t (1- bs)^{ - \frac {2-N\alpha } {2} - ( |\gamma  _{ j2 }| +  |\gamma  _{ j3 }| +4)\sigma  } ds \\  & \le C_0 \int _0^t (1- bs)^{ - \frac {2-N\alpha } {2} - ( |\gamma  _j | +4)\sigma  } ds
  \\ & =  C_0 \int _0^t (1- bs)^{ - \frac {2-N\alpha } {2} - 5 \sigma - ( |\gamma  _j | - 1)\sigma  } ds . 
\end{split} 
\end{equation*} 
Since $  |\gamma  _j | - 1 \ge 0$ (by~\eqref{B:b1}) and $ - \frac {2-N\alpha } {2} - 5 \sigma >-1$ (by~\eqref{fSupplN3:b1}), we obtain
\begin{equation} \label{fEstBu1} 
\begin{split} 
 \| \CSTd ^{\mathrm{low}} \| _{ L^\infty  } & \le C_0 (1-bt )^{- ( |\gamma  _j | - 1)\sigma } \int _0^t (1- bs)^{ - \frac {2-N\alpha } {2} - 5 \sigma  } ds
 \\ &\le \frac {C_0} {b} (1-bt )^{ - ( |\gamma  _j | - 1)\sigma  } . 
\end{split} 
\end{equation} 

Suppose now that $ \max \{  |\gamma  _{ j 1 }|,  |\gamma  _{ j 2 }|,  |\beta _\ell|  \} \geq2m+1 $. Then, all the other derivatives
are of order $\leq2m $, see the proof of Lemma~\ref{elemu1}.
We consider two cases. If $ | \gamma  _{ j1 }| \ge  2m +1$, then we rename $ \CSTd  ^{\mathrm{high}}$ as $ \CSTd_1 ^{\mathrm{high}}$. If $ | \gamma  _{ j2 }| \ge  2m +1$ or if one of the $\beta _\ell $ has order $\ge 2m+1$, then we rename $ \CSTd ^{\mathrm{high}}$ as $ \CSTd_2 ^{\mathrm{high}}$.
If $ | \gamma _{ j1 } |\ge 2m+1$, then it follows from~\eqref{fSupplN8} that
\begin{equation*} 
 \Bigl\| \langle \cdot \rangle ^{ \MDb (  | \gamma _{ j1 } | )}  v (t) \frac{D^{\gamma  _{ j1 }} ( |v_{0}  |^{\alpha}) }{ \Jdtx (t, \cdot ) } \Bigr\| _{ L^2 } \le 
 C_0  \min\{ 1, (bG(t))^{  1 - \frac {2\sigma } {2- N\alpha }} \}  ,
\end{equation*}  
hence using~\eqref{PSI}, \eqref{fDfnPhi3}, \eqref{fDfnPhi6}, \eqref{de-sigma}, and $ \MDb (  | \gamma _1 | ) \ge  \MDb (  | \beta  | )$,
\begin{equation}  \label{fEstBu2} 
\begin{split} 
 \| \langle \cdot \rangle & ^{ - n\alpha   + \MDb (  | \beta  | ) } v(t) \CSTd_1 ^{\mathrm{high}} \| _{ L^2  } \\ & \le 
 C_0  \min\{ 1, (bG(t))^{  1 - \frac {2\sigma } {2- N\alpha }} \}  \int _0^t  (1- bs)^{ - \frac {2-N\alpha } {2} - ( |\gamma  _{ j2 }| +  |\gamma  _{ j3 }| +4)\sigma }
  \\ &\le C_0 \min\{ 1, (bG(t))^{  1 - \frac {2\sigma } {2- N\alpha }} \} \frac { (1-bt )^{ - ( |\gamma  _j | - 1)\sigma  }} {b} ,
\end{split} 
\end{equation} 
as in~\eqref{fEstBu1}. To estimate $ \CSTd_2 ^{\mathrm{high}}$, we suppose for instance $ | \gamma  _{ j2 }| \ge 2m+1$ (the case where one of the $\beta _\ell $ has order $\ge 2m+1$ is treated similarly). We use again $\frac {1} {\Jdtx}\le 2$, and we observe that by\eqref{com-v-t-s}, 
\begin{equation*} 
\begin{split} 
 | v(t) | \CSTd_2 ^{\mathrm{high}}   \le 
 2^{1 + \frac {1} {\alpha }}    \int _0^t  
  (  \langle x\rangle^{n} | v | ) ^{-\alpha}    |  \langle x\rangle  ^{\alpha n}  D^{\gamma_{j1}} ( |v_{0} |^{\alpha 
} )  | \,
| D^{ \gamma  _{ j2 }} \Delta v | \prod  _{ \ell =1 } ^{ |\gamma  _{j 3} | } \frac {  | D^{ \beta _\ell } v | } {  | v | }.
 \end{split} 
\end{equation*} 
Note also that 
 \begin{equation*} 
  \| \langle \cdot \rangle ^{   \MDb (  | \beta | +2 ) } D^{ \gamma  _{ j 2 }} \Delta v \| _{ L^2 } \le  \| \langle \cdot \rangle ^{   \MDb (  | \gamma  _{ j2 } | +2 ) } D^{ \gamma  _{ j2 }} \Delta v \| _{ L^2 } \le K_1 (1- bs) ^{\sigma ( | \gamma  _{ j2 } | +4)} 
 \end{equation*} 
  by~\eqref{PSI}, \eqref{fDfnPhi2},   and~\eqref{de-sigma}. 
Using~\eqref{PSI}, \eqref{fDfnPhi3}, \eqref{fDfnPhi6},  and~\eqref{de-sigma}, 
and since $\MDb(  |\beta | +2 ) \ge \MDb (  |\beta | ) - 2$  we obtain
\begin{equation}  \label{fEstBu3}
\begin{split} 
 \| \langle \cdot \rangle ^{   \MDb (  | \beta | ) -2 } v(t) \CSTd_2 ^{\mathrm{high}} \| _{ L^2  } & \le 
 C_0 \int _0^t  (1- bs)^{ - \frac {2-N\alpha } {2} - ( |\gamma  _{ j2 }| +  |\gamma  _{ j3 }| + 4 )\sigma }
  \\ &\le \frac {C_0} {b} (1-bt )^{ - ( |\gamma  _j | - 1)\sigma  } ,
\end{split} 
\end{equation} 
as in~\eqref{fEstBu1}. 

\Step8 Proof of~\eqref{Decay-v}. \quad As observed above (Step~5), we need only show that the terms of the form $ \CSTu _2$ (given by~\eqref{B}) satisfy~\eqref{Decay-v}. 
Since $ |\beta |\le 2m$,  all the derivatives $\rho $ and $\gamma _j$ also have order $\le 2m$. 
We use~\eqref{fSupplN5} for the term $\frac { D^{\rho} ( |v_{0} (x) |^{\alpha} ) } {\Jdtx (t,x ) } $. 
For the terms $\frac {D^{\gamma_{j}} \Jdtx(t,x) } { \Jdtx(t,x) }$, we use the decomposition~\eqref{eq-v-4:b4}.
We have $  \| \CSTt _1^j  \| _{ L^\infty  } \le C_0$ by~\eqref{fEstNBd1}. 
Moreover, $\frac {1} {\Jdtx (t,x)} \le 2$ by~\eqref{bis12}, so that estimate~\eqref{fEstBu1} yields
\begin{equation*} 
 \|\CSTt _2^j \| _{ L^\infty  } \le 2  \| \CSTd ^{\mathrm{low}} \| _{ L^\infty  } \le  \frac {C_0} {b} (1-bt )^{ - ( |\gamma  _j | - 1)\sigma  } \le  C_0  (1-bt )^{ - ( |\gamma  _j | - 1)\sigma  } . 
\end{equation*} 
Therefore, given $p_0\in \{ 1, \cdots , p \}$, we obtain
\begin{equation*} 
 \Bigl\|   \prod_{j=p_0}^{p}  \frac {D^{\gamma_{j}} \Jdtx(t,x) } { \Jdtx(t,x) } \Bigr\| _{ L^\infty  } \le 
 C_0    \prod_{j=p_0}^{p}    (1-bt )^{ - ( |\gamma  _j | - 1)\sigma  }  \le C_0 (1-bt)^{-  \sum _{ p_0 }^p  ( |\gamma _j| -1 )  \sigma}; 
\end{equation*} 
and so, 
\begin{equation} \label{fEstAdlow2} 
 \Bigl\|   \prod_{j=p_0}^{p}  \frac {D^{\gamma_{j}} \Jdtx(t,x) } { \Jdtx(t,x) } \Bigr\| _{ L^\infty  } \le C_0 (1-bt)^{- \sigma ( \sum _{ p_0 }^p  ( |\gamma _j|  ) +  \sigma} .
\end{equation} 
Therefore, using~\eqref{fSupplN5} for the term $\frac { D^{\rho} ( |v_{0} (x) |^{\alpha} ) } {\Jdtx (t,x ) } $ and~\eqref{fEstAdlow2} with $p_0=1$,
\begin{equation} \label{fEstAdlow} 
 \| \langle \cdot \rangle ^{\frac {2n\alpha \sigma } {2-N\alpha }}  \CSTu _2 \| _{ L^\infty  } \le C_0 \min \{ 1,  (   bG (t)  )^{  1 - \frac {2\sigma } {2- N\alpha }  }  \} (1-bt)^{- \sigma ( \sum _{ 1 }^p  ( |\gamma _j|  ) +  \sigma} .
\end{equation} 
since $ \sum _{ 1 }^p  |\gamma _j| =  |\gamma |\le  |\beta |$. 
This proves estimate~\eqref{Decay-v}. 

\Step9 Proof of~\eqref{Decay-v2}. \quad As observed above (Step~5), we need only show that the terms of the form $ \CSTu _2$ (given by~\eqref{B}) satisfy~\eqref{Decay-v2}. Let $2m+1 \le  |\beta | \leq J-2$. 

If all the derivatives $\rho $ and $\gamma _j$ have order $\le 2m$, then estimate~\eqref{fEstAdlow} holds. 
Now using~\eqref{PSI} and~\eqref{fDfnPhi1}, $  \| \langle \cdot \rangle ^n v(t) \| _{ L^\infty  }  \le K_1$, so that
\begin{equation*} 
\begin{split} 
\Vert \langle \cdot  \rangle ^ { \MDb (  | \beta | )} v(t) \CSTu _2 \Vert_{L^2 } &  \le C_0 \Vert \langle \cdot  \rangle ^ {-n + \MDb (  | \beta | )}  \CSTu _2 \Vert_{L^2 }  \le C_0 \Vert   \CSTu _2 \Vert_{L^2 } \\ & \le C_0  \| \langle \cdot \rangle ^{\frac {2n\alpha \sigma } {2-N\alpha }}  \CSTu _2 \| _{ L^\infty  } ,
\end{split} 
\end{equation*} 
by~\eqref{fSupplN3:b3}.  Thus we see that $ \CSTu _2 $ satisfies~\eqref{Decay-v2}.

Suppose now $ |\rho |\ge 2m+1$. It follows that $ |\gamma _j|\le 2m$ for all $j$, see the proof of Lemma~\ref{elemu1}.
In particular, estimate~\eqref{fEstAdlow2} holds. Using~\eqref{fSupplN8} for the term $\frac { D^{\rho} ( |v_{0} (x) |^{\alpha} ) } {\Jdtx (t,x ) } $,  we see that $ \CSTu _2 $ also satisfies~\eqref{Decay-v2} in this case.

Suppose next that one of the $\gamma _j$'s have order $\ge 2m+1$, for instance $ |\gamma _1| \ge 2m+1$. 
Assume also that $p\ge 2$, the case $p=1$ being simpler. 
It follows (see the proof of Lemma~\ref{elemu1}) that $ |\rho |\le 2m$ and $ |\gamma _j |\le 2m$ for $j\ge 2$. 
Using~\eqref{eq-v-4:b4}, we deduce that
\begin{equation} \label{eq-v-4:b5}
 \Bigl| \frac {D^{\gamma_{1}} \Jdtx(t,x) } { \Jdtx(t,x) } \Bigr|  \le C_0  \frac {  | D^{\gamma_{1} } ( |v_{0} (x) |^{\alpha} ) |} {b G (t) \Jdtx(t,x) } +  \frac {  | D^{\gamma_{1}} f(t,x) | } { \Jdtx(t,x) } .
\end{equation}  
Writing
\begin{equation*} 
 \CSTu _2= \frac { D^{\rho} ( |v_{0} (x) |^{\alpha} ) } {\Jdtx (t,x ) }  \frac{ D^{\gamma_{1} } \Jdtx (t,x )  }{ \Jdtx (t, x )  }  \prod_{j=2}^{p} \frac{ D^{\gamma_{j} } \Jdtx (t,x )  }{ \Jdtx (t, x )  } , 
\end{equation*}
we are led to estimate 
\begin{equation*} 
\CSTq_1 = \frac {1} {b G(t)} \frac {  D^{\rho} ( |v_{0} (x) |^{\alpha} ) } {\Jdtx (t,x ) }  \frac{  D^{\gamma_{1} }  ( |v_{0} (x) |^{\alpha} )  }{ \Jdtx (t, x )  }  \prod_{j=2}^{p} \frac{ D^{\gamma_{j} } \Jdtx (t,x )  }{ \Jdtx (t, x )  } ,
\end{equation*} 
and
\begin{equation*} 
\CSTq_2 = \frac { D^{\rho} ( |v_{0} (x) |^{\alpha} ) } {\Jdtx (t,x ) }  \frac{ D^{\gamma_{1} } f (t,x )  }{ \Jdtx (t, x )  }  \prod_{j=2}^{p} \frac{ D^{\gamma_{j} } \Jdtx (t,x )  }{ \Jdtx (t, x )  }.
\end{equation*} 
Since 
\begin{equation*} 
\frac {1} {bG(t) \Jdtx (t, x )} \le \frac {C_0} { | v_0 (x)|^\alpha },
\end{equation*} 
by~\eqref{bis12}, we deduce that
\begin{equation*} 
 |\CSTq_1| \le C_0 \frac {  | D^{\rho} ( |v_{0} (x) |^{\alpha} )| } {  |v_{0} (x) |^{\alpha} }  \frac{  | D^{\gamma_{1} }  ( |v_{0} (x) |^{\alpha} ) | }{ \Jdtx (t, x )  }  \prod_{j=2}^{p} \frac{  |D^{\gamma_{j} } \Jdtx (t,x ) | }{ \Jdtx (t, x )  } ,
\end{equation*} 
We use~\eqref{elemu1:1} for the first term, \eqref{fSupplN8} for the second term, and~\eqref{fEstAdlow2} with $p_0=2$ for the product, and we obtain
\begin{equation*} 
  \| \langle \cdot \rangle ^{ \MDb (  | \gamma_1 | )}  v (t) \CSTq_1  \| _{ L^2 } \le 
C_0  (1-bt)^{- (   \vert \beta \vert -1 )  \sigma}  \min\{ 1, (bG(t))^{  1 - \frac {2\sigma } {2- N\alpha }} \}  ,
\end{equation*} 
so that $\CSTq_1$ satisfies~\eqref{Decay-v2}.  (Recall that $ \MDb (  | \gamma_1 | ) \ge  \MDb (  | \beta | )$.)

We now estimate $\CSTq_2$.
With the notation~\eqref{fNWestmm1} (with $j=1$) we are led to estimate 
\begin{equation*} 
\CSTc = \frac { D^{\rho} ( |v_{0} (x) |^{\alpha} ) } {\Jdtx (t,x ) }  \CSTd  \prod_{j=2}^{p} \frac{ D^{\gamma_{j} } \Jdtx (t,x )  }{ \Jdtx (t, x )  } ,
\end{equation*} 
with $ \CSTd = \CSTd ^{\mathrm{low}}$, $ \CSTd _1 ^{\mathrm{high}}$ or $ \CSTd _2 ^{\mathrm{high}}$. 
In the cases $ \CSTd = \CSTd ^{\mathrm{low}}$ and $ \CSTd = \CSTd _2 ^{\mathrm{high}}$, 
we use~\eqref{fSupplN5} for the first term and~\eqref{fEstAdlow2} for the product, and we obtain
(since $\sum _{ 2 }^p  |\gamma _j|  =  |\gamma |-  |\gamma _1|\le  |\beta |-  |\gamma _1|$)
\begin{equation} \label{fEstAu1} 
\begin{split} 
\| \langle \cdot  \rangle ^ { \MDb (  | \beta | )} v(t) \CSTc \|_{L^2 } \le &
C_0 \min \{ 1,  (   bG (t)  )^{  1 - \frac {2\sigma } {2- N\alpha }  }  \} \\ & \times 
  (1-bt)^{-  ( |\beta |-  |\gamma _1| -1 )  \sigma} 
\| \langle \cdot  \rangle ^ { \MDb (  | \beta | ) - \frac {2n\alpha \sigma } {N-2\alpha }} v(t) \CSTd \|_{L^2 } . 
\end{split} 
\end{equation} 
Since $  \| \langle \cdot \rangle ^n v(t) \| _{ L^\infty  }  \le K_1$ by~\eqref{PSI} and~\eqref{fDfnPhi1}, we deduce from~\eqref{fEstBu1} and~\eqref{fSupplN3:b3}  that
\begin{equation} \label{fEstAu2} 
\begin{split} 
\| \langle \cdot  \rangle ^ { \MDb (  | \beta | ) - \frac {2n\alpha \sigma } {N-2\alpha }} v(t) \CSTd^{\mathrm{low}} \|_{L^2 } &  \le C_0 
\| \langle \cdot  \rangle ^ { \MDb (  | \beta | ) - \frac {2n\alpha \sigma } {N-2\alpha } - n}  \CSTd^{\mathrm{low}} \|_{L^2 } \\  &  \le C_0 \| \langle \cdot  \rangle ^ {- \frac {2n\alpha \sigma } {N-2\alpha }  }  \CSTd^{\mathrm{low}} \|_{L^2 }    \le C_0 \|  \CSTd^{\mathrm{low}} \|_{L^\infty  } \\ &\le \frac {C_0} {b} (1-bt )^{ - ( |\gamma  _1 | - 1)\sigma  } \\ &
\le C_0  (1-bt )^{ - ( |\gamma  _1 | - 1)\sigma  }.
\end{split} 
\end{equation} 
Estimates~\eqref{fEstAu1} and~\eqref{fEstAu2} show that $\CSTc$  satisfies~\eqref{Decay-v2} in the case $ \CSTd = \CSTd ^{\mathrm{low}}$. 
Next, we use~\eqref{fSupplN3:b3} and~\eqref{fEstBu3} to obtain
\begin{equation} \label{fEstAu4} 
\begin{split} 
\| \langle \cdot  \rangle ^ { \MDb (  | \beta | ) - \frac {2n\alpha \sigma } {N-2\alpha }} & v(t) \CSTd _2 ^{\mathrm{high}} \|_{L^2 } 
 \le \| \langle \cdot \rangle ^{   \MDb (  | \beta | ) -2 } v(t) \CSTd_2 ^{\mathrm{high}} \| _{ L^2  } \\ & \le \frac {C_0} {b} (1-bt )^{ - ( |\gamma  _j | - 1)\sigma  }   \le  C_0  (1-bt )^{ - ( |\gamma  _j | - 1)\sigma  } .
\end{split} 
\end{equation} 
Estimates~\eqref{fEstAu1} and~\eqref{fEstAu4} show that $\CSTc$  satisfies~\eqref{Decay-v2} in the case $ \CSTd = \CSTd _2 ^{\mathrm{high}}$. 
Finally, for $ \CSTd = \CSTd _1 ^{\mathrm{high}}$, we use $\frac {1} {\Jdtx}\le 2$ and we obtain
\begin{equation*} 
 |\CSTc | \le  2 | D^{\rho} ( |v_{0} (x) |^{\alpha} ) | \, | \CSTd _1 ^{\mathrm{high}}  | \,  \Bigl| \prod_{j=2}^{p} \frac{ D^{\gamma_{j} } \Jdtx (t,x )  }{ \Jdtx (t, x )  } \Bigr| .
\end{equation*}
We use~\eqref{elemu1:2} for the first term and~\eqref{fEstAdlow2} for the product, and we deduce that
\begin{equation*} 
\| \langle \cdot  \rangle ^ { \MDb (  | \beta | )} v(t) \CSTc \|_{L^2 } \le C_0  (1-bt)^{-  ( |\beta |-  |\gamma _1| -1 )  \sigma}  \| \langle \cdot \rangle ^{ - n\alpha   + \MDb (  | \beta  | ) } v(t) \CSTd_1 ^{\mathrm{high}} \| _{ L^2  } .
\end{equation*} 
Applying~\eqref{fEstBu2}, we conclude that $\CSTc$ also satisfies~\eqref{Decay-v2} in the case $ \CSTd = \CSTd _1 ^{\mathrm{high}}$. This completes the proof.
\end{proof}

\section{global existence for~\eqref{NLS-1}} \label{sGlobal} 
In this section, we apply Proposition~\ref{lem-Decay-v} and Corollary~\ref{cor-Decay-v} of Section~\ref{sAPriori} to prove the global existence of solutions to~\eqref{NLS-1} for sufficiently large $b$. 

The main result of this section is the following.

\begin{prop} \label{BD-GWP} 
Let $\lambda \in \C$ satisfy $\Im\lambda<0$. 
Assume~\eqref{Conditionalpha}, \eqref{fCondonk}--\eqref{fCondonm}, \eqref{fSupplN2:b1}  and let $\Spa $ be defined by~\eqref{n27}.
Let $b>0$, $K >1$, let $v_{0}\in \Spa $ satisfy~\eqref{IV:b1}, let $v\in C([0, \Tma ), \Spa )$ be the solution of~\eqref{NLS-1}
given by Proposition~$\ref{LWP}$, and let $\Psi $ be defined by~\eqref{1l}--\eqref{fDfnPhi8}.
Set
\begin{equation} \label{feContPsi:2} 
 \CSTnw = K + \CSTnv K^2
\end{equation} 
where $\CSTnv $ is the constant in~\eqref{feContPsi}.
It follows that there exists $b_{1} \ge b_0> 1$, where $b_0$ is given by Proposition~$\ref{lem-Decay-v}$, such that if 
$b\geq b_{1}$, then  $\Tma =\frac{1}%
{b}$,
\begin{equation} \label{UB}
\sup\limits_{0\leq T<\frac{1}{b}} \Psi_{T} \leq5 \CSTnw , 
\end{equation}
where $\Psi_{T}$ is defined by~\eqref{fDfnPhi8}, and
\begin{equation} \label{fEstFu} 
\sup _{ 0\le t< \Tma }\sup _{ x\in \R^N }  |f(t,x)| \le \frac {1} {4},
\end{equation} 
where $f(t,x)$ is defined by~\eqref{eq-v-4:b2}.
\end{prop}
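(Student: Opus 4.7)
The plan is a continuity (bootstrap) argument in the spirit of the discussion following~\eqref{Duhamel}. I would set $K_1 = 5\CSTnw$ and note that, by Lemma~\ref{eContPsi}, $\Psi_0 \le \CSTnw < K_1$ and $\Psi$ is continuous on $[0, \Tma)$. Define $T^{*} = \sup \{ T \in [0, \Tma) : \Psi_T \le K_1 \}$. Then $T^{*} > 0$ and, by continuity, $\Psi_{T^{*}} = K_1$ whenever $T^{*} < \Tma$. Taking $b \ge b_0$, where $b_0$ is the threshold given by Proposition~\ref{lem-Decay-v} for this $K_1$, the bootstrap hypothesis on $[0, T^{*}]$ automatically yields the decay estimates~\eqref{Decay-vmu}--\eqref{Decay-v2}, Corollary~\ref{cor-Decay-v}, and $\| f(t, \cdot )\|_{L^\infty} \le 1/4$. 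The goal is then to prove a strictly improved bound $\Psi_{T^{*}} \le 2\CSTnw$, which contradicts $\Psi_{T^{*}} = K_1$ unless $T^{*} = \Tma$.

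To control $\Phi_{1, T^{*}}$ and $\Phi_{2, T^{*}}$ I would apply Proposition~\ref{eP1} to $v$, viewed as the solution of~\eqref{LS} with $f = \lambda (1-bs)^{-(4-N\alpha)/2} |v|^\alpha v$. The Leibniz expansion
\[
D^\beta ( |v|^\alpha v) = |v|^\alpha D^\beta v + \sum_{\substack{\gamma_1 + \gamma_2 = \beta \\ |\gamma_1| \ge 1}} c_{\gamma_1,\gamma_2}\, D^{\gamma_1}(|v|^\alpha)\, D^{\gamma_2} v
\]
exhibits the crucial dissipative cancellation emphasized in the introduction: the principal term $|v|^\alpha D^\beta v$, once paired with $D^\beta \overline v$ and passed through the imaginary part in~\eqref{2m-2}, \eqref{2m}, \eqref{2m+1}, produces
\[
\Im \lambda \cdot (1-bs)^{-(4-N\alpha)/2} \langle x\rangle^n |v|^\alpha |D^\beta v| \le 0
\]
since $\Im \lambda < 0$, and so is simply dropped. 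The remaining commutator terms, with $|\gamma_1| \ge 1$, are controlled by~\eqref{Decay-v}, \eqref{Decay-v2}, and~\eqref{Decay-v1}, combined with the bootstrap bound $(1-bs)^{\sigma_j}\| v(s)\|_{i, j} \le K_1$. This yields integrands of the schematic form
\[
C_0 K_1\, (1-bs)^{-(4-N\alpha)/2 - (|\beta| - 1)\sigma} \, \min \bigl\{ 1, (bG(s))^{1 - 2\sigma/(2-N\alpha)} \bigr\} .
\]

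The main technical step is the integration of these expressions. I would split $[0, t]$ at the threshold $s_*$ defined by $bG(s_*) = 1$, i.e.\ $(1-bs_*)^{(2-N\alpha)/2} = 1/(b+1)$; on $[0, s_*]$ the min equals $1$ and the primitive of the pure negative power of $(1-bs)$ evaluated at $s_*$ is an explicit power of $(b+1)$, while on $[s_*, t]$ one uses $(bG)^{1 - 2\sigma/(2-N\alpha)}$ and bounds the denominator $1 - (1-bs)^{(2-N\alpha)/2}$ below by $1/2$. A direct computation then gives
\[
(1-bt)^{\sigma_{|\beta|}} \int_0^t \text{(integrand)}\, ds \;\le\; \frac{C_0 K_1}{ b^{2\sigma/(2-N\alpha)}} \cdot \frac{1}{\max\{ 1, |\beta|\sigma\}}
\]
uniformly in $t \in [0, T^{*}]$. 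For the top-order $L^2$ derivatives $|\beta| \ge 2m + 1$, the shifted weights in~\eqref{de-sigma} (with the ``$+1, +2, +3, +4$'' jumps at $|\beta| = 2m+1, 2m+2, J-1, J$) are engineered precisely to absorb the two extra derivatives lost when differentiating $|v|^\alpha$ (cf.~\eqref{fEstSob1}), and the smallness of the factor $1/(|\beta|\sigma)$ for $|\beta|$ near $J$ is exactly what~\eqref{de-sigma:b1} and~\eqref{fSupplN1:b2} provide. Adding the initial-data contribution $K$, we obtain $\Phi_{1, T^{*}} + \Phi_{2, T^{*}} \le K + C_0 K_1 / b^\delta$ for some $\delta > 0$.

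For $\Phi_{3, T^{*}}$, from~\eqref{eq-v-4}--\eqref{eq-v-4:b3} together with $|f| \le 1/4$, $|v|^{-\alpha} \le (5/4)|v_0|^{-\alpha} + 2\alpha |\Im\lambda|/(bG(2-N\alpha))$; using $\langle x\rangle^n |v_0| \ge 1/K$ and $(1-bt)^{(2-N\alpha)/(2\alpha)}/(bG(t))^{1/\alpha} \le b^{-1/\alpha}$ (which follows from the definition of $G$), one concludes $\Phi_{3, T^{*}} \le (5/4)^{1/\alpha} K + C b^{-1/\alpha}$. For $\Phi_{4, T^{*}}$ one writes $D^\beta v / |v| = (\langle x\rangle^n D^\beta v)/(\langle x\rangle^n |v|)$ and combines the bounds on $\Phi_{1, T^{*}}$ and $\Phi_{3, T^{*}}$, using~\eqref{fEstSob1} in the range $2m + 1 \le |\beta| \le 2m + 2$. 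Choosing $b_1 \ge b_0$ so large that each $b^{-\delta}$-error is $\le \CSTnw /4$, we obtain $\Psi_{T^{*}} \le 2 \CSTnw < K_1$, so necessarily $T^{*} = \Tma$. The uniform bound $\Psi_T \le K_1$ on $[0, \Tma)$ then controls $\| v(T)\|_\Spa$ and $(\inf_x \langle x\rangle^n |v(T,x)|)^{-1}$ up to multiplicative factors $(1-bT)^{-\sigma_J}$ and $(1-bT)^{-(2-N\alpha)/(2\alpha)}$, both finite for $T < 1/b$; so~\eqref{blowup} forces $\Tma = 1/b$, and~\eqref{fEstFu} is recovered from Step~1 of the proof of Proposition~\ref{lem-Decay-v}. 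The main obstacle is the careful bookkeeping of weights, particularly for $|\beta| \ge 2m + 1$ where the $\MDb$-weighted $L^2$ estimate couples with the loss of two derivatives; it is precisely here that the condition~\eqref{Conditionalpha} and the engineered choices~\eqref{fCondonk}--\eqref{fCondonm}, \eqref{fSupplN2:b1} of $k, n, m, \sigma$ are used.
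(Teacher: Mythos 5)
Your bootstrap scheme matches the paper's: same $T^*$--continuity argument with $K_1=5\CSTnw$, same application of Proposition~\ref{eP1} with $f=\lambda(1-bs)^{-(4-N\alpha)/2}|v|^\alpha v$, same dissipative sign cancellation of $|v|^\alpha D^\beta v$, and the same splitting of the Duhamel integral at $bG(s_*)=1$. However, your key schematic bound is not quite right at the top orders $|\beta|\in\{J-1,J\}$, and the smallness mechanism you invoke there is incorrect. For $|\beta|\ge J-1$, the commutator term with $\gamma_1=\beta$, namely $D^\beta(|v|^\alpha)\,v$, cannot be handled via Proposition~\ref{lem-Decay-v} (whose estimates~\eqref{Decay-v}--\eqref{Decay-v2} only reach $|\gamma_1|\le J-2$). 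The paper therefore isolates it (formula~\eqref{fLbzPrec1}) and applies Lemma~\ref{eRemsb1}, which resurrects a term $\alpha|v|^\alpha D^\beta v$ \emph{without} the protecting sign of $\Im\lambda$. Its time integral is estimated by Lemma~\ref{eLemInt1}, estimate~\eqref{feLemInt1:1:bz1}, which has the \emph{additive} form $\tfrac{CM}{b^{2\sigma/(2-N\alpha)}}+\tfrac{2}{\sigma_{|\beta|}}$, not the \emph{product} $\tfrac{C_0K_1}{b^{2\sigma/(2-N\alpha)}}\cdot\tfrac{1}{\max\{1,|\beta|\sigma\}}$ you propose. The summand $\tfrac{2}{\sigma_{|\beta|}}$ is \emph{not} $O(b^{-\delta})$; after multiplying by the ambient $5\CSTnw|\lambda|(1+\alpha|\Im\lambda|)/|\Im\lambda|$ it is bounded by exactly one $\CSTnw$, and only because $m$ was chosen large (\eqref{fSupplN1:b2} and~\eqref{de-sigma:b1}). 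Consequently your intermediate claim $\Phi_{1,T^*}+\Phi_{2,T^*}\le K+C_0K_1/b^\delta$ (hence $\Psi_{T^*}\le 2\CSTnw$) is false at top order; the true bound at $|\beta|\in\{J-1,J\}$ is $3\CSTnw$ plus $b$-small corrections, which still beats $5\CSTnw$ and so the contradiction survives, but the way you described the smallness is inaccurate. A second, smaller imprecision: in bounding $\Phi_{3,T^*}$ you use only $|f|\le 1/4$, which gives the coefficient $(5/4)^{1/\alpha}$; since $1/\alpha<(N+2)/2$ this can exceed $5$ for large $N$, and then $(5/4)^{1/\alpha}K\le 5\CSTnw$ is not guaranteed. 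You should instead exploit that $|f|=O(1/b)$ (estimate~\eqref{bd-f}), or argue directly from the integrated identity~\eqref{eq-v-3} as the paper does in its Step~4, to get $\Phi_{3,T^*}^\alpha\le\CSTnw^\alpha+C/b$.
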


For the proof of Proposition~\ref{BD-GWP}, we will use the following lemma.

\begin{lem} \label{eLemInt1} 
Given $b\ge 1$, $\nu > 0 $, $M\ge 1$ and $0< T^\ast \le \frac {1} {b}$, let 
\begin{equation}  \label{feLemInt1:1:b1} 
I_1 (t) = \int _0^t  \min \{ 1 , (bG(s))^{1 - \frac {2\sigma } {2-N\alpha }} \} (1-bs)^{- \frac {4-N\alpha } {2} - \nu + \sigma }   ds,
\end{equation} 
and 
\begin{equation}  \label{feLemInt1:1:b2} 
I_2 (t) = \int _0^t  \min \{ M , bG(s) \} (1-bs)^{- \frac {4-N\alpha } {2} - \nu }   ds,
\end{equation}
$0\le t< T^\ast$, where $G$ is defined by~\eqref{defG}. 
It follows that there exists a constant $C= C(  N, \alpha , \sigma )$ such that 
\begin{equation} \label{feLemInt1:1} 
I _1 (t) \le  \frac {C} {b ^{\frac {2\sigma } {2- N\alpha }}} (1-bt)^{ -\nu  },
\end{equation} 
and that
\begin{equation} \label{feLemInt1:1:bz1} 
I_2 (t) \le \Bigl(  \frac {C M } { b^{  \frac{2\sigma}{2-N\alpha}}  }   + \frac {2} {\nu }  \Bigr) (1-bt)^{- \nu },
\end{equation} 
for all $b\ge 1$, $M\ge 1$ and $0\le t< T^\ast $.
\end{lem}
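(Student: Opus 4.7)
The plan is to prove both estimates by a direct calculation, in each case splitting the interval of integration at a threshold time where $bG(s)$ crosses the relevant cutoff inside the minimum. I would first change variables to $\tau=1-bs$, set $\beta:=(2-N\alpha)/2\in (0,1)$ and $\tau_{0}:=1-bt$, so that $bG(s)$ becomes $b\tau^{\beta}/(1-\tau^{\beta})$, $ds=-d\tau/b$, and both integrals become (up to the prefactor $1/b$) integrals in $\tau$ from $\tau_{0}$ to $1$.

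For $I_{1}$ I would set $\tau_{*}:=(b+1)^{-1/\beta}$, the value of $\tau$ at which $bG=1$. On $\{\tau\ge\tau_{*}\}$ (small $s$) the minimum equals $1$; on $\{\tau\le\tau_{*}\}$ (large $s$), the elementary inequality $1-\tau^{\beta}\ge 1-\tau_{*}^{\beta}=b/(b+1)$ yields $bG\le(b+1)\tau^{\beta}$, hence $(bG)^{1-\sigma/\beta}\le(b+1)^{1-\sigma/\beta}\tau^{\beta-\sigma}$. On the first piece the integrand is $\tau^{-1-\beta-\nu+\sigma}$; I would factor it as $\tau^{-\nu}\cdot\tau^{-1-\beta+\sigma}$, use $\tau\ge\tau_{0}$ to pull $\tau_{0}^{-\nu}$ out, and then integrate $\tau^{-1-\beta+\sigma}$ on $[\tau_{*},1]$ using $\beta>\sigma$ (which comes from \eqref{fSupplN2:b1}). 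This produces a factor $\tau_{*}^{-\beta+\sigma}=(b+1)^{(\beta-\sigma)/\beta}$, which combined with the $1/b$ prefactor and $(b+1)\le 2b$ for $b\ge 1$ is bounded by $Cb^{-2\sigma/(2-N\alpha)}$. On the second piece the integrand is bounded by $(b+1)^{1-\sigma/\beta}\tau^{-1-\nu}$, whose $\tau$-integral is controlled by $\tau_{0}^{-\nu}$ and contributes again $Cb^{-2\sigma/(2-N\alpha)}\tau_{0}^{-\nu}$.

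For $I_{2}$ I would analogously set $\tau_{**}:=(M/(b+M))^{1/\beta}$, the value at which $bG=M$, and split at $\tau_{**}$. On $\{\tau\ge\tau_{**}\}$ the minimum equals $M$; on $\{\tau\le\tau_{**}\}$, the bound $1-\tau^{\beta}\ge b/(b+M)$ gives $bG\le(b+M)\tau^{\beta}$. The first piece reduces to $M\int\tau^{-1-\beta-\nu}\,d\tau$; after pulling out $\tau_{0}^{-\nu}$ and using $\tau^{-\beta}\le\tau_{**}^{-\beta}=(b+M)/M$ on this region, the bound becomes $((b+M)/(b(\beta+\nu)))\tau_{0}^{-\nu}$, which I would decompose as a $1/\nu$-type piece plus an $M/b^{\sigma/\beta}$-type piece via the identity $(b+M)/b=1+M/b$. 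The second piece reduces to $(b+M)\int\tau^{-1-\nu}\,d\tau\le(b+M)\tau_{0}^{-\nu}/\nu$, which splits as $\tau_{0}^{-\nu}/\nu+M\tau_{0}^{-\nu}/(b\nu)$, producing the $2/\nu$ term and an additional contribution that can be absorbed into $CM/b^{2\sigma/(2-N\alpha)}$ using $b\ge 1$ and $\sigma/\beta<1$.

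The main obstacle is the bookkeeping of the various ratios $(b+1)/b$, $(b+M)/M$, $(b+M)/b$; the careful choice of the transition point is precisely what makes these ratios absorb into the gain $b^{-2\sigma/(2-N\alpha)}$ (for $I_{1}$) or into the prefactor $M$ (for $I_{2}$). The condition $\sigma<\beta=(2-N\alpha)/2$ from \eqref{fSupplN2:b1} is used crucially both to make the $\tau$-integral over the ``large $\tau$'' region converge after one factor of $\tau^{-\nu}$ has been extracted, and to ensure that $(b+1)^{1-\sigma/\beta}/b\le 2^{1-\sigma/\beta}b^{-\sigma/\beta}$ has the sought smallness in $b$.
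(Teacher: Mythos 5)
Your treatment of $I_1$ is essentially the paper's proof in disguise: the change of variables $\tau=1-bs$ is cosmetic, and your split at $\tau_*=(b+1)^{-1/\beta}$ (where $bG=1$) is exactly the paper's threshold $T_b$. Both arguments absorb a residual factor $1/\nu$ into the constant, which is harmless here since in all applications $\nu\ge\sigma$.

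For $I_2$, however, there is a genuine gap. You split at $\tau_{**}=(M/(b+M))^{1/\beta}$, the point where $bG=M$ (the ``natural'' place where the minimum switches). The paper instead splits at $\widetilde{T}_b$ defined by $(1-b\widetilde{T}_b)^{-\beta}=1+b^{1-\sigma/\beta}$, i.e.\ where $bG=b^{\sigma/\beta}$ — a deliberately $M$-independent, $b$-dependent choice. With your split, the region $\tau\le\tau_{**}$ contributes $\frac{b+M}{b\nu}\tau_0^{-\nu}=\frac1\nu\tau_0^{-\nu}+\frac{M}{b\nu}\tau_0^{-\nu}$, and you assert that $\frac{M}{b\nu}\tau_0^{-\nu}$ can be absorbed into $\frac{CM}{b^{\sigma/\beta}}\tau_0^{-\nu}$ ``using $b\ge1$ and $\sigma/\beta<1$.'' This is false: from $b^{\sigma/\beta-1}\le1$ one only gets $\frac{M}{b\nu}\le\frac{M}{b^{\sigma/\beta}\nu}$, and the leftover $\frac1\nu$ cannot be removed without assuming $\nu$ is bounded below — in particular the absorption fails at $b=1$ for small $\nu$. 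The paper's threshold sidesteps this: on $t\le\widetilde{T}_b$ one uses $\min\le M$ together with $(1-bt)^{-\beta}-1\le b^{1-\sigma/\beta}$ to land directly on $\frac{M}{\beta\,b^{\sigma/\beta}}\tau_0^{-\nu}$ with no $\frac1\nu$, and on $t\ge\widetilde{T}_b$ one uses $\min\le bG$ and $G(s)\le2(1-bs)^\beta$ (which involves no $M$) to get exactly $\frac{2}{\nu}\tau_0^{-\nu}$. The clean separation into an $M b^{-\sigma/\beta}$ piece and an exact $\frac2\nu$ piece is precisely what is exploited later in~\eqref{dis53:b2} via~\eqref{de-sigma:b1}, so the weaker bound your split produces would not serve. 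To repair the argument, replace $\tau_{**}$ by the $\tau$-analogue of $\widetilde{T}_b$, namely $\tau=(1+b^{1-\sigma/\beta})^{-1/\beta}$.
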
 

\begin{proof}
We first prove~\eqref{feLemInt1:1}.
For a given $b\ge 1,$ we consider
$T_b  \in (0, \frac {1} {b} ) $ defined by
\begin{equation} \label{feLemInt1:2}
bG(T_b)=1, \text{ i.e. }(1-bT_b)^{-\frac{2-N\alpha}{2}}=b+1,
\end{equation}
and study two cases separately: the case $0\leq t \le T_{b}$, $t< T^\ast$; and, if
$T_{b}<T^{\ast},$ the case $T_{b}\leq t < T^{\ast}$. 

Suppose first that
$0\leq t \le T_{b}$, $t< T^\ast$. Since  $\min \{  1, (  bG (s) )  ^{1-\frac{2\sigma}{2-N\alpha}} \} =1$ for $0\le s\le t$, we obtain
\begin{equation*} 
I_1 (t)  \le  \int _0^t (1-bs)^{-\frac {4-N\alpha}{2} - \nu + \sigma } \le  (1-bt)^{-  \nu} \int _0^t (1-bs)^{-\frac {4-N\alpha}{2} + \sigma }  .
\end{equation*} 
Note that $ - \frac {2-N\alpha } {2} + \sigma <0$ by the third inequality in~\eqref{fSupplN2:b1}, so that using~\eqref{feLemInt1:2} 
\begin{equation*} 
\begin{split} 
b  \Bigl( \frac {2-N\alpha } {2} - \sigma  \Bigr) \int _0^t (1-bs)^{-\frac {4-N\alpha}{2} + \sigma } & =  (1-bt) ^{- \frac {2-N\alpha } {2} + \sigma } -1  \le    (1-bt) ^{- \frac {2-N\alpha } {2} + \sigma }  \\
& \le   (1-bT_b ) ^{- \frac {2-N\alpha } {2} + \sigma }  =    (b +1 ) ^{1 - \frac {2\sigma  } {2-N\alpha } }  . 
\end{split} 
\end{equation*} 
Thus we see that~\eqref{feLemInt1:1} holds for $0\leq t\leq  T_{b}$, $t < T^\ast$.

Suppose now that $T_{b}<T^{\ast}$ and $T_{b}\leq t < T^{\ast}$.  
Given $s\in (t, T^{\ast})$, we have
\begin{equation*} 
\min\{1,(bG (s) )^{1-\frac{2\sigma}{2-N\alpha}}\} = (bG (s) )^{1-\frac{2\sigma}{2-N\alpha}} 
=  b ^{1-\frac{2\sigma}{2-N\alpha}} G (s) ^{1-\frac{2\sigma}{2-N\alpha}} .
\end{equation*} 
Moreover, by~\eqref{defG} and~\eqref{feLemInt1:2},
\begin{equation} \label{feLemInt1:4} 
G(s) \le \frac{(1-bs)^{\frac{2-N\alpha}{2}}  }{1-(1-bT_b)^{\frac{2-N\alpha}{2}}}  =  \frac{(1-bs)^{\frac{2-N\alpha}{2}}  } { 1
  - \frac {1} {b +1} }  \le 2 (1-bs)^{\frac{2-N\alpha}{2}};
\end{equation} 
and so
\begin{equation*} 
\min\{1,(bG (s) )^{1-\frac{2\sigma}{2-N\alpha}}\} \le 2 b ^{1-\frac{2\sigma}{2-N\alpha}}  (1-bs)^{ \frac {2-N\alpha } {2} -\sigma  }.
\end{equation*} 
Applying~\eqref{feLemInt1:1} with $T= T_b$, we deduce that
\begin{equation*} 
\begin{split} 
 I _1(t) & \le  \frac {C} {b ^{ \frac {2\sigma  } {2-N\alpha } }} (1-b T_b )^{-  \nu }+  2 b ^{1-\frac{2\sigma}{2-N\alpha}}    \int_{T_b}^{t} (1-bs)^{-1- \nu } \\ & \le  \frac {C} {b ^{ \frac {2\sigma  } {2-N\alpha } }} (1-b t )^{-  \nu }+  2 b ^{1-\frac{2\sigma}{2-N\alpha}}    \int_{0}^{t} (1-bs)^{-1- \nu } \\ & \le  \frac {C} { b ^{ \frac{2\sigma}{2-N\alpha}} } (1- bt) ^{- \nu }.
\end{split} 
\end{equation*} 
Thus~\eqref{feLemInt1:1} is satisfied for all $0\le t< T^\ast $.

We now prove~\eqref{feLemInt1:2}.
For a given $b\ge 1,$ we consider
$ \widetilde{T} _b  \in (0, \frac {1} {b} ) $ defined by
\begin{equation} \label{feLemInt1:12}
(1-b \widetilde{T} _b)^{-\frac{2-N\alpha}{2}}= 1+ b ^{ 1 - \frac{2\sigma}{2-N\alpha}},
\end{equation}
and study two cases separately: the case $0\leq t \le  \widetilde{T} _{b}$, $t< T^\ast$; and, if
$ \widetilde{T} _{b}<T^{\ast}$ the case $ \widetilde{T}_ {b}\leq t < T^{\ast}$.

Suppose first that
$0\leq t \le  \widetilde{T} _{b}$, $t< T^\ast$. Since  $\min \{  M, (  bG (s) )  ^{1-\frac{2\sigma}{2-N\alpha}} \} \le M$, we obtain
\begin{equation*} 
\begin{split} 
I_2 (t) & \le M \int _0^t (1-bs)^{-\frac {4-N\alpha}{2} - \nu  } \le  M (1-bt)^{-\nu } \int _0^t (1-bs)^{-\frac {4-N\alpha}{2}} \\
& \le M (1-bt)^{-\nu }  \frac {2} {(2-N\alpha ) b} [ (1-bt )^{- \frac {2-N\alpha } {2} }-1 ] ,
\end{split} 
\end{equation*} 
so that
\begin{equation}  \label{feLemInt1:5} 
I_2 (t)  \le  \frac {2M} { (2-N\alpha )  b^{  \frac{2\sigma}{2-N\alpha}}  } (1-bt)^{-\nu }.
\end{equation} 
Suppose now that $ \widetilde{T} _{b}<T^{\ast}$ and $ \widetilde{T} _{b}\leq t < T^{\ast}$.  
Given $s\in (t, T^{\ast})$, we have by~\eqref{defG} and~\eqref{feLemInt1:12},
\begin{equation*} 
 G(s) \le  \frac{(1-bs)^{\frac{2-N\alpha}{2}}  }{1-(1-b \widetilde{T} _b)^{\frac{2-N\alpha}{2}}}  =   \frac{(1-bs)^{\frac{2-N\alpha}{2}}  } { 1   - \frac {1} {1+ b ^{ 1 - \frac{2\sigma}{2-N\alpha}} } }  \le 2 (1-bs)^{\frac{2-N\alpha}{2}}.
\end{equation*} 
It follows that
\begin{equation*} 
\int  _{  \widetilde{T}_b  } ^t bG(s) (1-bs)^{- \frac {4-N\alpha } {2} - \nu }   \le 
2 b \int  _{  \widetilde{T}_b  } ^t  (1-bs)^{-1 - \nu } \le \frac {2} {\nu }  (1-bt)^{- \nu }.
\end{equation*} 
Applying~\eqref{feLemInt1:5} with $t=  \widetilde{T}_b $, we conclude that
\begin{equation*} 
I_2 (t) \le \Bigl(  \frac {2M} { (2-N\alpha )  b^{  \frac{2\sigma}{2-N\alpha}}  }   + \frac {2} {\nu }  \Bigr) (1-bt)^{- \nu }.
\end{equation*} 
This, together with~\eqref{feLemInt1:5}, proves~\eqref{feLemInt1:2}. 
\end{proof} 

\begin{proof}  [Proof of Proposition~$\ref{BD-GWP}$]
It follows from~\eqref{feContPsi:2} and Lemma~\ref{eContPsi} that $\Psi _0\le  \CSTnw $ and that $\Psi _t \le 2 \CSTnw $ for $t>0$ and small. We set
\begin{equation} \label{fClaimu1} 
T^{\ast}=\sup\{0\leq T<\Tma ;\Psi_{T}\leq5 \CSTnw \},
\end{equation} 
so that
\begin{equation*} 
0< T^{\ast} \le \Tma .
\end{equation*} 
We claim that there exists $b_{1}\geq b_{0},$ such that if $b\geq b_{1}$ then
\begin{equation} \label{T*=max}
T^{\ast}=\Tma . 
\end{equation}
Assuming the claim~\eqref{T*=max}, we complete the proof. 
First, if $\Tma <\frac{1}{b}$, then by~\eqref{fClaimu1}, \eqref{T*=max}  and Lemma~\ref{eContPsi}, we see that $\Psi_{\Tma }\leq5 \CSTnw $.
It follows that for all $0\le t< \Tma$
\begin{equation*}
  \Vert v(t)\Vert_\Spa + \Bigl(  \inf _{x\in \R^N }\langle x\rangle^{n}|v(t,x)| \Bigr)  ^{-1}\leq5 \CSTnw   (1-bt)^{-\sigma_{J}}  \leq5 \CSTnw (1-b\Tma )^{-\sigma_{J}}<\infty, 
\end{equation*}
which contradicts the blowup alternative~\eqref{blowup}. Thus $ T^{\ast}= \Tma =\frac{1}{b}$, and then estimate~\eqref{UB} follows from~\eqref{fClaimu1}.  
Moreover, applying~\eqref{bd-f} with $K_1= 5  \widetilde{K} $, we deduce that~\eqref{fEstFu} holds, by choosing $b_1$ possibly larger.

We prove the claim~\eqref{T*=max} by contradiction, so we assume that
\begin{equation*} 
T^{\ast}<\Tma .
\end{equation*} 
By the definition of $T^{\ast}$ and continuity of $\Psi_{T}$ in $T$, we see that 
\begin{equation}  \label{bd-T*}
\Psi_{T^{\ast}}=5 \CSTnw .
\end{equation}
In the rest of the proof, we apply Proposition~\ref{lem-Decay-v} and Corollary~\ref{cor-Decay-v} with $T= T^{\ast}$ and $K_1= 5  \widetilde{K} $, and in 
particular we assume $b\ge b_0$.  
For further reference, we note that by~\eqref{Decay-v0},
\begin{equation} \label{Decay-v0:b1}
\alpha  \| v(t) \| _{ L^\infty  }^{\alpha}\leq  \Bigl(  \frac{1 + \alpha  |\Im \lambda|}{  |\Im \lambda|} \Bigr)  \min \{ 2 K^\alpha  ,  bG (t) \} ,
\end{equation}
for all $0\le t \le T^\ast$. 
Moreover, we denote by $C_1, C_2, C_3,C_4, C_5 >0$ various constants depending possibly on $\beta, \alpha, N, K, K_{1}, \lambda$, etc, but not on $b$, $v$, $ T^{\ast} $, whose exact values are irrelevant and can change from line to line. 

We proceed in several steps.

\Step1 Control of $\Phi_{1,T^\ast}$.\quad 
It follows from~\eqref{Decay-vmu} that
\begin{equation} \label{fEstZOd} 
 \| \langle \cdot \rangle ^n v(t) \| _{ L^\infty  } \le 2   \| \langle \cdot \rangle ^n v_0 \| _{ L^\infty  } \le 2 \CSTnw .
\end{equation} 
Next, given $1\le  | \beta |\le 2m$, we apply~\eqref{2m-2} or~\eqref{2m}  with $f (  s )  =\lambda (1-bs)^{-\frac
{4-N\alpha}{2}} |v|^{\alpha} v $ to obtain
\begin{equation} \label{est-Phi1-lowreg}
 \vert \langle x \rangle^{n}D^{\beta}v \vert \leq \CSTnw  +
  I  +  \int_{0}^{t}  (1-bs)^{-\frac
{4-N\alpha}{2}} \Im  \Bigl( \lambda  \frac{   \langle x \rangle^{2n}  D^{\beta } (  |v|^\alpha v)  D^{\beta}\overline{v} }{ 
\langle x \rangle^{n} \vert D^{\beta}v \vert } \Bigr)ds ,
\end{equation} 
where
\begin{equation*} 
I= 
\begin{cases} 
\displaystyle \int_{0}^{t} 
\sup_{  |\gamma | \leq  |\beta |+2 } \Vert\langle\cdot\rangle^{n}D^{\gamma}v(s)\Vert_{L^{\infty}}ds,  &  |\beta |\le 2m-2,  \\
\displaystyle A \int_{0}^{t}  \sup_{  |\beta |+ 2 \le  |\gamma | \leq   |\beta | +k+2} \Vert\langle
\cdot\rangle^{n}D^{\gamma }v(s)\Vert_{L^{2}}ds,  & 2m-1\le  |\beta |\le 2m .
\end{cases} 
\end{equation*} 
It follows from~\eqref{bd-T*}, \eqref{fDfnPhi1}, \eqref{fDfnPhi2} and~\eqref{de-sigma} that
\begin{equation*} 
I \le 5 \CSTnw (1 + A) \int _0^t (1-bs) ^{- \sigma (  |\beta | + k + 4)} .
\end{equation*} 
Since $\sigma (k+4) < 1$ by the fourth inequality in~\eqref{fSupplN2:b1} and $\sigma  |\beta | = \sigma  _{  |\beta | }$, we obtain
\begin{equation} \label{est-Phi1-lowreg-1}
I   \le 5 \CSTnw (1 + A) (1-bt)^{- \sigma  _{  |\beta | }}  \int _0^{\frac {1} {b}} (1-bs) ^{- \sigma (  k + 4)}  \le \frac {C_1} {b} (1-bt)^{- \sigma  _{  |\beta | }}  .
\end{equation} 
To estimate the last term in~\eqref{est-Phi1-lowreg}, we write 
\begin{equation} \label{fMMre7} 
D^\beta ( |v|^\alpha v) =  |v |^\alpha D^\beta v + \sum_{\substack{\gamma_{1}+\gamma_{2}=\beta \\ |\gamma_{1} | \ge 1} }c_{\gamma_{1},\gamma_{2}} D^{\gamma_{1}} ( |v|^{\alpha} ) D^{\gamma_{2}}v
\end{equation} 
where the coefficients $c _{ \gamma _1, \gamma _2 }$ are given by Leibniz's rule. 
Since
\begin{equation} \label{fMMre6} 
\Im ( \lambda    |v|^\alpha D^{\beta }   v  D^{\beta}\overline{v}  ) = (\Im \lambda )    |v|^\alpha  |D^{\beta }   v|^2 \le 0
\end{equation} 
we see that
\begin{equation*} 
\Im  \Bigl( \lambda  \frac{   \langle x \rangle^{2n}   |v|^\alpha D^{\beta }   v  D^{\beta}\overline{v} }{ 
\langle x \rangle^{n} \vert D^{\beta}v \vert } \Bigr) \le 0 .
\end{equation*} 
Moreover, 
\begin{equation*} 
 \Bigl| \lambda  \frac{   \langle x \rangle^{2n}  D^{\gamma_{1}} ( |v|^{\alpha} ) D^{\gamma_{2}} v  D^{\beta}\overline{v} }{ 
\langle x \rangle^{n} \vert D^{\beta}v \vert } \Bigr| \le  |\lambda |   \langle x \rangle^n  | D^{\gamma_{1}} ( |v|^{\alpha} ) | \,  | D^{\gamma_{2}} v  | ,
\end{equation*} 
so that
\begin{equation} \label{fINTM1} 
 \Im  \Bigl( \lambda  \frac{   \langle x \rangle^{2n}  D^{\beta } (  |v|^\alpha v)  D^{\beta}\overline{v} }{ 
\langle x \rangle^{n} \vert D^{\beta}v \vert } \Bigr) \le C_1 \sum_{\substack{\gamma_{1}+\gamma_{2}=\beta \\ |\gamma_{1} | \ge 1} }  \| \langle \cdot \rangle ^n D^{\gamma_{1}} ( |v|^{\alpha} ) D^{\gamma_{2}}v \| _{ L^\infty  } .
\end{equation} 
Applying~\eqref{bd-T*}, \eqref{Decay-v} (with $\beta $ replaced by $\gamma _1$, recall that $ |\gamma _1|\ge 1$), \eqref{fDfnPhi1} and~\eqref{de-sigma}, we obtain
\begin{equation*}
 \| \langle \cdot \rangle ^n  D^{\gamma_{1}} ( |v|^{\alpha} ) D^{\gamma_{2}}v \| _{ L^\infty  }   \leq C_1   \min \{  1, (  bG ( s ) ) ^{1-\frac{2\sigma}{2-N\alpha}} \} (1-b s )^{- ( |\beta | -1)   \sigma } .
\end{equation*} 
Applying~\eqref{feLemInt1:1} of Lemma~\ref{eLemInt1} with $\nu =  |\beta | \sigma $, we deduce that
\begin{equation}  \label{fMMre2} 
 \int_{0}^{t}  (1-bs)^{-\frac {4-N\alpha}{2}}  \| \langle \cdot \rangle ^n  D^{\gamma_{1}} ( |v|^{\alpha} ) D^{\gamma_{2}}v \| _{ L^\infty  } \le  \frac {C_1} { b ^{ \frac{2\sigma}{2-N\alpha}} } (1- bt) ^{-  |\beta | \sigma   } .
\end{equation} 
Estimates~\eqref{fINTM1} and~\eqref{fMMre2} prove that
\begin{equation} \label{fMMre3} 
\int_{0}^{t}  (1-bs)^{-\frac
{4-N\alpha}{2}} \Im  \Bigl( \lambda  \frac{   \langle x \rangle^{2n}  D^{\beta } (  |v|^\alpha v)  D^{\beta}\overline{v} }{ 
\langle x \rangle^{n} \vert D^{\beta}v \vert } \Bigr)ds \le  \frac {C_1} { b ^{ \frac{2\sigma}{2-N\alpha}} } (1- bt) ^{-  |\beta | \sigma } ,
\end{equation} 
for all $0\le t\le T^\ast$. 
Since  $ |\beta |\sigma =\sigma  _{  |\beta | }$, it now follows from~\eqref{fEstZOd}, \eqref{est-Phi1-lowreg}, \eqref{est-Phi1-lowreg-1}, and~\eqref{fMMre3} that if $b_1\ge b_0$ is sufficiently large so that
\begin{equation} \label{fMMre4} 
C_1  \Bigl( \frac1b_1 +\frac{1}{ b_1 ^\frac{2\sigma}{2-N\alpha}}  \Bigr)< \CSTnw ,
\end{equation} 
then 
\begin{equation} \label{fMMre5} 
\Phi_{1,T^*}\leq 2 \CSTnw 
\end{equation} 
provided $b\ge b_1$. 

\Step2 Control of $(1-bt) ^{\sigma  _{  |\beta | }} \Vert\langle
x\rangle^{ \MDb (  |\beta | ) }D^{\beta}v\Vert_{L^{2}} $ for $2m+1\le  |\beta |\le J-2$. \quad 
Applying~\eqref{2m+1} with $\mu=n- \MDb (|\beta|)\in [0, n]$ (which corresponds to $\nu =  |\beta |-2m -1 \le k+1$ if $\MDb ( |\beta |) =n$ and $\nu= k+1 $ if $\MDb ( |\beta |) <n$) and $f (  s )  =\lambda(1-bs)^{-\frac
{4-N\alpha}{2}}|v|^{\alpha}v$ we obtain
\begin{equation} \label{dis45-1}
\Vert\langle\cdot\rangle^{ \MDb (  |\beta | ) }  D^{\beta}v\Vert_{L^{2}}\leq \CSTnw + C_2 \int_{0}^{t} 
 \Vert\langle\cdot\rangle^{ \MDb (  |\beta | )-1}\nabla D^{\beta }v\Vert_{L^{2}}ds  +  \widetilde{I} , 
\end{equation} 
where
\begin{equation*} 
 \widetilde{I} = \int_{0}^{t}  (1-bs)^{-\frac{4-N\alpha}{2}}   \frac{ \Im \int_{ \R^N } \lambda   \langle x
\rangle^{2 \MDb (  |\beta | ) } D^{\beta} ( | v |^\alpha v)  D^{\beta}\overline
{v}  dx}{\Vert\langle\cdot\rangle^{ \MDb (  |\beta | ) }D^{\beta} v\Vert_{L^{2}}} ds
\end{equation*} 
Note that $ \MDb (  |\beta | ) -1 \le  \MDb (  |\beta | +1 ) $. Therefore, it follows from~\eqref{bd-T*}, \eqref{fDfnPhi2} and~\eqref{de-sigma} that
\begin{equation*} 
 \int_{0}^{t} 
 \Vert\langle\cdot\rangle^{ \MDb (  |\beta | )-1}\nabla D^{\beta }v\Vert_{L^{2}}ds \le 5 \CSTnw \int_{0}^{t} (  1-bs )  ^{- (  |\beta|+3 )  \sigma}ds .
\end{equation*} 
Since $2\sigma <1$ by the fourth inequality in~\eqref{fSupplN2:b1}, we deduce that
\begin{equation} \label{fTBC1} 
\begin{split} 
 \int_{0}^{t} (  1-bs )  ^{- (  |\beta|+3 )  \sigma}ds & \le (  1-bt )  ^{- (  |\beta|+1 ) \sigma }  \int_{0}^{t} (  1-bs )  ^{- 2  \sigma}ds
\\ &  \le \frac {C_2} {b } (  1-bt )  ^{- (  |\beta|+1 ) \sigma }  \le \frac {C_2} {b } (  1-bt )  ^{-  \sigma _{  |\beta | } } .
\end{split} 
\end{equation} 
Thus we conclude that
\begin{equation}  \label{dis54:b1}
C_2 \int_{0}^{t} 
 \Vert\langle\cdot\rangle^{ \MDb (  |\beta | )-1}\nabla D^{\beta }v\Vert_{L^{2}}ds  \le \frac {C_2} {b } (  1-bt )  ^{-  \sigma _{  |\beta | } } .
\end{equation} 
To estimate $ \widetilde{I} $ we use, as in Step~1, \eqref{fMMre7} and~\eqref{fMMre6} and we deduce that
\begin{equation*} 
\begin{split} 
\Im \int_{ \R^N } \lambda   \langle x
\rangle^{2 \MDb (  |\beta | ) } & D^{\beta} ( | v |^\alpha v)   D^{\beta}\overline
{v}  dx \\ & \le C_2 \sum_{\substack{\gamma_{1}+\gamma_{2}=\beta \\ |\gamma_{1} | \ge 1} }  \| \langle \cdot \rangle ^{2 \MDb (  |\beta | )} D^{\gamma_{1}} ( |v|^{\alpha} ) D^{\gamma_{2}}v D^\beta  \overline{v}  \| _{ L^1  }  \\ & \le C_2 
 \| \langle \cdot \rangle ^{ \MDb (  |\beta | )}  D^\beta  v  \| _{ L^2  } \sum_{\substack{\gamma_{1}+\gamma_{2}=\beta \\ |\gamma_{1} | \ge 1} }  \| \langle \cdot \rangle ^{ \MDb (  |\beta | )} D^{\gamma_{1}} ( |v|^{\alpha} ) D^{\gamma_{2}}v  \| _{ L^2  } ;
\end{split} 
\end{equation*} 
and so, 
\begin{equation} \label{dis5}
\begin{split} 
  \frac{ \Im \int_{ \R^N } \lambda   \langle x 
\rangle^{2 \MDb (  |\beta | ) }  D^{\beta} ( | v |^\alpha v)  D^{\beta}\overline
{v}  dx}{\Vert\langle\cdot\rangle^{ \MDb (  |\beta | ) }D^{\beta} v\Vert_{L^{2}}}  & \\  \le C_2 \sum_{\substack{\gamma_{1}+\gamma_{2}=\beta \\ |\gamma_{1} | \ge 1} } & \| \langle \cdot \rangle ^{ \MDb (  |\beta | )} D^{\gamma_{1}} ( |v|^{\alpha} ) D^{\gamma_{2}}v  \| _{ L^2  } .
\end{split} 
\end{equation} 
To estimate the right-hand side of~\eqref{dis5}, we distinguish three cases: $ |\gamma _2| \ge 2m+1$; $ |\gamma _2| \le 2m$ and $ |\gamma _1|\ge 2m+1$; $ |\gamma _2| \le 2m$ and $ |\gamma _1|\le 2m$. 
If $ |\gamma _2| \ge 2m+1$, then $ |\gamma _1|\le 2m$. (Indeed, $ |\gamma _1| +  |\gamma _2| =  |\beta |\le J\le 4m +1$, by definition of $J$ and the first inequality in~\eqref{fCondonm}.) 
Since $\MDb ( |\beta |) \le \MDb ( |\gamma _2 |)$, we have
\begin{equation*} 
\| \langle \cdot \rangle ^{ \MDb (  |\beta | )} D^{\gamma_{1}} ( |v|^{\alpha} ) D^{\gamma_{2}}v  \| _{ L^2  }  \le  \|  D^{\gamma_{1}} ( |v|^{\alpha} )  \| _{ L^\infty  }  \| \langle \cdot \rangle ^{ \MDb (  |\gamma _2 | )} D^{\gamma_{2}}v  \| _{ L^2  }.
\end{equation*} 
It follows from~\eqref{Decay-v} (with $\beta $ replaced by $\gamma _1$, recall that $ |\gamma _1|\ge 1$),  
and from~\eqref{bd-T*}, \eqref{fDfnPhi2}, \eqref{de-sigma}, that
\begin{equation} \label{fEstSt2:1} 
 \| \langle \cdot \rangle  ^{ \MDb (  |\beta | )} D^{\gamma_{1}} ( |v|^{\alpha} ) D^{\gamma_{2}}v  \| _{ L^2  }  
 \le C_2  \min \{  1, (  bG ( s ) ) ^{1-\frac{2\sigma}{2-N\alpha}} \} (1-bs)^{-  ( | \beta  | +1 ) \sigma} .
\end{equation} 
(We use the fact that since $ |\gamma _2|\le J-2$, $\sigma  _{  |\gamma _2| } \le ( |\gamma _2|+2) \sigma $.)
If $ |\gamma _2| \le 2m$ and $ |\gamma _1|\ge 2m+1$, we deduce from~\eqref{Decay-v1} (with $\beta $ replaced by $\gamma _1$ and $\gamma $ by $\gamma _2$) and the property $\MDb ( |\beta |) \le \MDb ( |\gamma _1 |)$ that estimate~\eqref{fEstSt2:1} also holds. 
If $ |\gamma _1|,  |\gamma _2| \le 2m$, we use the fact that $\langle x \rangle ^{-\frac {2n\alpha \sigma } {2-N\alpha }} \in L^2 (\R^N )  $ by~\eqref{fSupplN3:b3}, and we estimate
\begin{equation*} 
 \begin{split} 
 \| \langle \cdot \rangle ^{ \MDb (  |\beta | )} D^{\gamma_{1}} ( |v|^{\alpha} ) D^{\gamma_{2}}v  \| _{ L^2  } & \le   \|   D^{\gamma_{1}} ( |v|^{\alpha} )   \| _{ L^2  }  \| \langle \cdot \rangle ^n D^{\gamma_{2}}v  \| _{ L^\infty  }
\\ &  \le C_2   \| \langle \cdot \rangle ^{\frac {2n\alpha \sigma } {2-N\alpha }}  D^{\gamma_{1}} ( |v|^{\alpha} )   \| _{ L^\infty }  \| \langle \cdot \rangle ^n D^{\gamma_{2}}v  \| _{ L^\infty  } .
 \end{split} 
\end{equation*} 
Using~\eqref{Decay-v}, and~\eqref{bd-T*}, \eqref{fDfnPhi1} and~\eqref{de-sigma} we see that in this case also estimate~\eqref{fEstSt2:1} holds. 
Applying~\eqref{fEstSt2:1} and~\eqref{feLemInt1:1} of Lemma~\ref{eLemInt1} with $\nu =  (|\beta | +2)  \sigma $, we deduce that
\begin{equation} \label{est-nonlinearity-2m+1-J-2}
 \widetilde{I} \le \frac {C_2} { b ^{ \frac{2\sigma}{2-N\alpha}} } (1- bt) ^{- ( |\beta | +2) \sigma   } .
\end{equation} 
Since  $ ( |\beta  | +2) \sigma \ge  \sigma  _{  |\beta | }$, it now follows from \eqref{dis45-1}, \eqref{dis54:b1} and~\eqref{est-nonlinearity-2m+1-J-2} that
$b_1\ge b_0$ is sufficiently large so that
\begin{equation} \label{fMMre4:b2} 
C_2  \Bigl( \frac1b_1 +\frac{1}{ b_1 ^\frac{2\sigma}{2-N\alpha}}  \Bigr)< \CSTnw ,
\end{equation} 
then 
\begin{equation} \label{fMMre5:b2} 
\sup  _{ 2m+1 \le  |\beta | \le J-2} \sup  _{ 0\le t\le T^\ast  }(1-bt) ^{\sigma  _{  |\beta | }} \Vert\langle
\cdot \rangle^{ \MDb (  |\beta | ) }D^{\beta}v\Vert_{L^{2}} \leq 2 \CSTnw 
\end{equation} 
provided $b\ge b_1$. 

\Step3 Control of $(1-bt) ^{\sigma  _{  |\beta | }} \Vert\langle
x\rangle^{ \MDb (  |\beta | ) }D^{\beta}v\Vert_{L^{2}} $ for $J-1 \le  |\beta |\le J$. \quad 
Applying~\eqref{2m+1} with $\mu=n- \MDb (|\beta|) = n-(J -  |\beta |)\in \{ n-1, n \}$, $\nu= k+1 $, and $f (  s )  =\lambda(1-bs)^{-\frac
{4-N\alpha}{2}}|v|^{\alpha}v$ we obtain
\begin{equation} \label{dis45}
\Vert\langle\cdot\rangle^{ \MDb (  |\beta | ) }  D^{\beta}v\Vert_{L^{2}}\leq \CSTnw + C_2 (J -  |\beta | ) \int_{0}^{t} 
 \Vert\langle\cdot\rangle^{ J-1 -  |\beta |} \nabla D^{\beta }v\Vert_{L^{2}}ds  +  \widetilde{I} , 
\end{equation} 
where
\begin{equation*} 
 \widetilde{I} = \int_{0}^{t}  (1-bs)^{-\frac{4-N\alpha}{2}}   \frac{ \Im \int_{ \R^N } \lambda   \langle x
\rangle^{2 \MDb (  |\beta | ) } D^{\beta} ( | v |^\alpha v)  D^{\beta}\overline
{v}  dx}{\Vert\langle\cdot\rangle^{ \MDb (  |\beta | ) }D^{\beta} v\Vert_{L^{2}}} ds
\end{equation*} 
The second term in the right-hand side of~\eqref{dis45} vanishes if $ |\beta |=J$, so we need only estimate it if $ |\beta |=J-1$. In this case, it follows from~\eqref{bd-T*}, \eqref{fDfnPhi2} and~\eqref{de-sigma}
\begin{equation*} 
\begin{split} 
\int _0^t  \Vert\langle\cdot\rangle^{ J-1 -  |\beta |} \nabla D^{\beta }v\Vert_{L^{2}} ds & = \int _0^t  \Vert \nabla D^{\beta }v\Vert_{L^{2}} \\ & \le 5 \CSTnw  \int_{0}^{t} (  1-bs )  ^{- (  |\beta|+5 )  \sigma}ds .
\end{split} 
\end{equation*} 
Since $2\sigma <1$ by the fourth inequality in~\eqref{fSupplN2:b1}, we see that (compare~\eqref{fTBC1})
\begin{equation*} 
 \int_{0}^{t} (  1-bs )  ^{- (  |\beta|+5 )  \sigma}ds \le \frac {C_3} {b} (1-bt)^{- ( |\beta |+3)\sigma }= \frac {C_3} {b} (1-bt)^{- \sigma _{  |\beta | } }. 
\end{equation*} 
Thus we obtain
\begin{equation} \label{dis54}
C_2 (J -  |\beta | ) \int_{0}^{t} 
 \Vert\langle\cdot\rangle^{ J-1 -  |\beta |} \nabla D^{\beta }v\Vert_{L^{2}}ds \le \frac {C_3} {b} (1-bt)^{- \sigma _{  |\beta | } },
\end{equation} 
for both $ |\beta |=J-1$ and $ |\beta |=J$. 

To estimate $ \widetilde{I} $ we use a more precise version of~\eqref{fMMre7} where we isolate the term corresponding to  $ \gamma _1= \beta $, i.e.
\begin{equation} \label{fLbzPrec1} 
D^\beta ( |v|^\alpha v) = |v |^\alpha D^\beta v  + D^\beta ( |v|^\alpha ) v + c_{\gamma_{1},\gamma_{2}}  \sum _{\substack{\gamma_{1}+\gamma_{2}=\beta \\ |\gamma_{1} |,  |\gamma _2| \ge 1} }    D^{\gamma_{1}} ( |v|^{\alpha} ) D^{\gamma_{2}}v .
\end{equation} 
Using~\eqref{fMMre6},  we obtain (compare~\eqref{dis5})
\begin{equation} \label{dis53}
\begin{split} 
  \frac{ \Im \int_{ \R^N } \lambda   \langle x 
\rangle^{2 \MDb (  |\beta | ) }  D^{\beta} ( | v |^\alpha v)  D^{\beta}\overline
{v}  dx}{\Vert\langle\cdot\rangle^{ \MDb (  |\beta | ) }D^{\beta} v\Vert_{L^{2}}}  &
\le  |\lambda |\,  \| \langle \cdot \rangle ^{ \MDb (  |\beta | )}  D^\beta ( |v|^\alpha ) v \| _{ L^2 }
 \\  + C_3 \sum_{\substack{\gamma_{1}+\gamma_{2}=\beta \\ |\gamma_{1} |,  |\gamma _2| \ge 1} }  & \| \langle \cdot \rangle ^{ \MDb (  |\beta | )} D^{\gamma_{1}} ( |v|^{\alpha} ) D^{\gamma_{2}}v  \| _{ L^2  } .
\end{split} 
\end{equation} 
For the first term on the right-hand side of~\eqref{dis53}, we use formula~\eqref{fEstDerv1} with $\rho  $ replaced by $\alpha $,  and we obtain
\begin{equation} \label{dis53:b1}
\begin{split} 
 |\lambda |\,  \| \langle \cdot \rangle ^{ \MDb (  |\beta | )} D^\beta ( |v|^\alpha ) v \| _{ L^2 }  \le &  |\lambda | \alpha  \| \langle \cdot \rangle ^{ \MDb (  |\beta | )} |v|^\alpha D^\beta v \| _{ L^2 } \\ & + C_3  \sup  _{  D  }  \Bigl\| \langle \cdot \rangle ^{ \MDb (  |\beta | )}  |v|^{\alpha +1} \prod _{ \ell =1 }^{  |\beta  |}  \frac {| D^{\beta _\ell } v |} { |v|}    \Bigr\| _{ L^2 },
\end{split} 
\end{equation} 
where $ {D} $ is the set of $( \beta _\ell ) _{ 1\le  \ell \le  | \beta  | }$ with $ 0 \le |\beta _\ell | \le  |\beta | -1 $ and $\sum_{ \ell=1 }^{ |\beta  |} \beta _\ell =\beta $. 
It follows from~\eqref{Decay-v0:b1} and~\eqref{bd-T*} that 
\begin{equation*} 
\begin{split} 
 |\lambda | \alpha   \| \langle \cdot \rangle ^{ \MDb (  |\beta | )}  |v|^\alpha D^\beta v \| _{ L^2 } &  \le  |\lambda | \alpha   \|v \| _{ L^\infty  }^\alpha \| \langle \cdot \rangle ^{ \MDb (  |\beta | )} D^\beta v\| _{ L^2 } \\ &
 \le 5  |\lambda | \CSTnw  \Bigl(  \frac{1 + \alpha  |\Im \lambda|}{  |\Im \lambda|} \Bigr)  \min \{ 2 K^\alpha  ,  bG (t) \}  (1-bs)^{-\sigma_{|\beta |}}  .
\end{split} 
\end{equation*} 
Applying~\eqref{feLemInt1:1:bz1} of Lemma~\ref{eLemInt1} with $\nu = \sigma  _{  |\beta |  }$ and $M= 2K^\alpha $, and then~\eqref{de-sigma:b1}, we deduce that
\begin{equation} \label{dis53:b2}
\begin{split} 
 \int_{0}^{t}  (1-bs) & ^{-\frac{4-N\alpha}{2}}  |\lambda | \alpha   \| \langle \cdot \rangle ^{ \MDb (  |\beta | )}   |v|^\alpha D^\beta v \| _{ L^2 }     ds \\ & \le 5  |\lambda | \CSTnw  \Bigl(  \frac{1 + \alpha  |\Im \lambda|}{  |\Im \lambda|} \Bigr)
 \Bigl(  \frac {2 C K^\alpha  } { b^{  \frac{2\sigma}{2-N\alpha}}  }   + \frac {2} {\sigma  _{  |\beta |  } }  \Bigr) (1-bt)^{- \nu }
  \\ & \le  \Bigl[  5  |\lambda | \CSTnw  \Bigl(  \frac{1 + \alpha  |\Im \lambda|}{  |\Im \lambda|} \Bigr)
 \Bigl(  \frac {2 C K^\alpha  } { b^{  \frac{2\sigma}{2-N\alpha}}  }    \Bigr)   +  \widetilde{K}  \Bigr] (1-bt)^{- \nu }
 \\ & \le  \Bigl(   \frac {C_3 } { b^{  \frac{2\sigma}{2-N\alpha}}  }   
 +  \widetilde{K}  \Bigr) (1-bt)^{- \nu } . 
\end{split} 
\end{equation} 
We next consider the last terms in the right-hand side of~\eqref{dis53}, 
and we consider separately the cases $ |\gamma _1|\le J-2$ and $J-1 \le |\gamma _1|\le J$. 
(Indeed, if $ |\gamma _1| \ge J-1$, then we may not apply estimate~\eqref{Decay-v1} with $\beta $ replaced by $\gamma _1$ as we do in Step~2.) 
If $ |\gamma _1|\le J-2$, then we may proceed as in Step~2 and obtain, similarly to~\eqref{fEstSt2:1} and~\eqref{est-nonlinearity-2m+1-J-2}, then using $( |\beta |+2) \sigma \le  \sigma  _{  |\beta | } - \sigma $
\begin{equation} \label{fEstSt2:1:b1} 
\begin{split} 
 \int_{0}^{t}  (1-bs)^{-\frac{4-N\alpha}{2}}    \| \langle \cdot \rangle  ^{ \MDb (  |\beta | )} D^{\gamma_{1}} ( |v|^{\alpha} ) D^{\gamma_{2}}v  \| _{ L^2  }  ds & 
 \le  \frac {C_3} { b ^{ \frac{2\sigma}{2-N\alpha}} } (1- bt) ^{- ( |\beta | +2) \sigma   } \\ & 
 \le  \frac {C_3} { b ^{ \frac{2\sigma}{2-N\alpha}} } (1- bt) ^{-  \sigma   _{  |\beta | } } .
\end{split} 
\end{equation} 
We now consider the case $J-1 \le |\gamma _1|\le J$. 
Applying formula~\eqref{fEstDerv1} with $\beta $ replaced by $\gamma _1$ and $\rho $ by $\alpha $, we obtain
\begin{equation*} 
\frac { | D^{ \gamma _1 } (  |v|^\alpha   ) | } { |v|^\alpha  } \le   C_3 \sup  _{   \widetilde{D }  } \prod _{ \ell =1 }^{  |\gamma _1 |}    \frac {| D^{\beta _\ell } v |} { |v|}  
\end{equation*} 
where $ \widetilde{D} $ is the set of $( \beta _\ell ) _{ 1\le  \ell \le  |\gamma _1 | }$ with $ 0 \le |\beta _\ell | \le  |\gamma _1| $) and $\sum_{ \ell=1 }^{ |\gamma _1 |} \beta _\ell =\gamma _1 $. Thus we see that we have to estimate terms of the form
\begin{equation*} 
\CSTu  = \Bigl\| \langle \cdot \rangle ^{ \MDb (  |\beta | )}  |v|^{\alpha}  |D^{\gamma _2} v  | \prod _{ \ell =1 }^{  |\gamma _1 |}    \frac {| D^{\beta _\ell } v |} { |v|} \Bigr\| _{ L^2 } ,
\end{equation*} 
where $\gamma _1 + \gamma _2 = \beta $, $ 0 \le |\beta _\ell | \le   |\beta |-1 $) and $\sum_{ \ell=1 }^{ |\gamma _1 |} \beta _\ell =\gamma _1 $. 
 If all $| \beta _\ell |\leq 2m$, then writing
\begin{equation*} 
\CSTu  \le  \| \,  |v|^\alpha \| _{ L^2  }  \| \langle \cdot \rangle ^n D^{\gamma _2} v\| _{ L^\infty  } \prod _{ \ell =1 }^{  |\gamma _1 |}    \Bigl\| \frac {| D^{\beta _\ell } v |} { |v|} \Bigr\| _{ L^\infty } ,
\end{equation*} 
we deduce from~\eqref{est-v-alpha-L2}, \eqref{bd-T*},  \eqref{fDfnPhi1}, \eqref{fDfnPhi6} and~\eqref{de-sigma}  that
\begin{equation*} 
\CSTu \le C_3 \min\{1, (bG (s) )^{1-\frac{2\sigma}{2-N\alpha}} \}(1-bs)^{-|\beta|\sigma} .
\end{equation*} 
Since $ |\beta |\ge J-1$, we have $|\beta|\sigma \le \sigma  _{  |\beta | }- \sigma $, so that
\begin{equation} \label{fEstdau} 
\CSTu \le C_3 \min\{1, (bG (s) )^{1-\frac{2\sigma}{2-N\alpha}} \}(1-bs)^{-\sigma  _{  |\beta | }+ \sigma } .
\end{equation} 
If one the $ |\beta _\ell |\ge 2m+1$, say $ |\beta _1 |\ge 2m+1$, then $ |\beta _\ell |\le 2m$ for $\ell \ge 2$.
Writing
\begin{equation*} 
\CSTu   \le  \| \,  |v|^\alpha \| _{ L^\infty }  \Bigl\| \frac {| D^{ \gamma _2 } v |} { |v|} \Bigr\| _{ L^\infty } 
 \| \langle \cdot \rangle ^{ \MDb (  |\beta _1| )} D^{\beta _1} v\| _{ L^2} \prod _{ \ell =2 }^{  |\gamma _1 |}    \Bigl\| \frac {| D^{\beta _\ell } v |} { |v|} \Bigr\| _{ L^\infty } ,
\end{equation*} 
(where we use $\MDb (  |\beta | ) \le \MDb (  |\beta _1| )$), we deduce from~\eqref{Decay-v0}, \eqref{bd-T*},  \eqref{fDfnPhi2}, \eqref{fDfnPhi6} and~\eqref{de-sigma}  that
\begin{equation*} 
\CSTu \le C_3 \min\{1,(bG (s) ) \} (1-bs)^{-  |\gamma _2|\sigma - \sigma  _{  |\beta _1| } - (\sum_{ \ell=2 }^{ |\gamma _1|}  |\beta _\ell | ) \sigma   } .
\end{equation*} 
Since $ \min\{1,(bG (s) ) \} \le \min\{1,(bG (s) )^{1-\frac{2\sigma}{2-N\alpha}} \} $
and $\sum_{ \ell=2 }^{ |\gamma _1|}  |\beta _\ell | =  |\gamma _1|-  |\beta _1|$, we see that
\begin{equation*} 
\CSTu \le C_3 \min\{1,(bG (s) ) \} (1-bs)^{-  |\beta |\sigma - \sigma  _{  |\beta _1| } +  |\beta _1| \sigma   } .
\end{equation*} 
If $ |\beta |=J$, then $\sigma  _{  |\beta | }= ( |\beta | +4) \sigma $, and $ |\beta _1|\le  |\beta |-1$, so that $\sigma  _{  |\beta _1| } -  |\beta _1| \sigma \le 3\sigma $; and so $-  |\beta |\sigma - \sigma  _{  |\beta _1| } +  |\beta _1| \sigma \ge -\sigma  _{  |\beta | } + \sigma $. 
If $ |\beta |=J-1$, then $\sigma  _{  |\beta | }= ( |\beta | +3) \sigma $, and $ |\beta _1|\le  |\beta |-1$, so that $\sigma  _{  |\beta _1| } -  |\beta _1| \sigma \le 2\sigma $; and so $-  |\beta |\sigma - \sigma  _{  |\beta _1| } +  |\beta _1| \sigma \ge -\sigma  _{  |\beta | } + \sigma $. 
In both cases, we see that $ \CSTu  $ satisfies the estimate~\eqref{fEstdau}. 
Applying~\eqref{fEstdau} and~\eqref{feLemInt1:1} of Lemma~\ref{eLemInt1} with $\nu =   \sigma  _{  |\beta | } $, we deduce that
\begin{equation} \label{est-B3}
 \int_{0}^{t}  (1-bs)^{-\frac{4-N\alpha}{2}} \CSTu \,   ds \le  \frac {C_3} { b ^{ \frac{2\sigma}{2-N\alpha}} } (1-bt)^{- \sigma  _{  |\beta | }}. 
\end{equation} 
Note also that the last terms in~\eqref{dis53:b1} are of the form $  \CSTu $ with $  |\gamma _2|= 0$, so they also satisfy~\eqref{est-B3}.  
Applying~\eqref{dis53}, \eqref{dis53:b1}, \eqref{dis53:b2}, \eqref{fEstSt2:1:b1}  and~\eqref{est-B3}, we deduce that
\begin{equation} \label{est-B3:b1}
 \widetilde{I} \le  \Bigl(  \CSTnw + 
  \frac {C_3} { b ^{ \frac{2\sigma}{2-N\alpha}} } \Bigr) (1-bt)^{- \sigma  _{  |\beta | }} . 
\end{equation} 
Now putting together~\eqref{dis45}, \eqref{dis54} and~\eqref{est-B3:b1}, we see that
\begin{equation*} 
(1-bt)^{\sigma_{|\beta|}} \Vert\langle\cdot\rangle^{ \MDb (  |\beta | ) }  D^\beta v \Vert_{L^2}\leq 2 \CSTnw + C_3   \Bigl( \frac {1} {b} +   \frac {1} { b ^{ \frac{2\sigma}{2-N\alpha}} }   \Bigr),
\end{equation*} 
for $J-1\leq|\beta|\leq J$. Therefore, if $b_1\ge b_0$ is sufficiently large so that
\begin{equation} \label{fMMre4:b3} 
C_3  \Bigl( \frac1b_1 +\frac{1}{ b_1 ^\frac{2\sigma}{2-N\alpha}}  \Bigr) \le  \CSTnw ,
\end{equation} 
then 
\begin{equation} \label{fMMre5:b3} 
\sup  _{ J- 1 \le  |\beta | \le J} \sup  _{ 0\le t\le T^\ast  }(1-bt) ^{\sigma  _{  |\beta | }} \Vert\langle
\cdot \rangle^{ \MDb (  |\beta | ) }D^{\beta}v\Vert_{L^{2}} \leq 3 \CSTnw ,
\end{equation} 
provided $b\ge b_1$. 

\Step4 Control of $\Phi_{3,T^\ast}$.\quad 
 It follows from~\eqref{eq-v-3} that
\begin{equation*} 
\begin{split} 
\frac{(1-bt)^{\frac{2-N\alpha}{2}}}{\inf _{ x\in \R^N  } (
\langle x\rangle^{\alpha n}|v(t,x)|^{\alpha} )  }  \leq & \Bigl(
\inf _{x\in \R^N  } (  \langle x\rangle^{\alpha n} 
|v_{0}(x)|^{\alpha} )  \Bigr)  ^{-1}+\frac{2\alpha|\Im\lambda|}{b ( 2-N\alpha )  }\\ &
+\alpha  (1-bt)^{\frac{2-N\alpha}{2}}  \int_{0}^{t} (\langle x\rangle^n |v |)^{-\alpha
} \frac{ \vert L(s,x)\vert}{|v|} ds.
\end{split} 
\end{equation*}  
On the other hand, it follows from~\eqref{bd-T*}, \eqref{fDfnPhi3}, \eqref{eq-v-1:b1} and~\eqref{fDfnPhi6} that
\begin{equation*} 
(\langle x\rangle^n |v |)^{-\alpha } \frac{ \vert L(s,x) \vert}{|v|}  \le \CSTnw ^{ \alpha +1 }  (1-bs)^{ - \frac{2-N\alpha}{2} - 2 \sigma } . 
\end{equation*}  
Since $ \frac{2-N\alpha}{2} + 2 \sigma < 1$ by~\eqref{fSupplN3:b1}, we deduce that
\begin{equation*} 
 \int_{0}^{t} (\langle x\rangle^n |v |)^{-\alpha
} \frac{ \vert L(s,x) \vert}{|v|} ds \le \frac {C_4} {b}. 
\end{equation*} 
Thus we see that
\begin{equation*} 
\frac{(1-bt)^{\frac{2-N\alpha}{2}}}{\inf _{ x\in \R^N  } (
\langle x\rangle^{\alpha n}|v(t,x)|^{\alpha} )  } \le \CSTnw^\alpha + \frac {C_4} {b} . 
\end{equation*} 
Therefore, if $b_1\ge b_0$ is sufficiently large so that
\begin{equation} \label{fCondC4} 
\frac {C_4} {b_1} \le (2^\alpha -1) \CSTnw^\alpha ,
\end{equation} 
then
\begin{equation} \label{4}
\Phi_{3,T^{\ast}}\leq2 \CSTnw ,
\end{equation}
provided $b\ge b_1$. 

\Step5 Control of $\Phi_{4,T^\ast}$.\quad 
The case $\beta =0$ is trivial, so we assume $ |\beta |\ge 1$.
Applying~\eqref{dis64} with
$f ( t )  =\lambda(1-bt)^{-\frac{4-N\alpha}{2}}|v|^{\alpha}v$ and
$n=0$,  and~\eqref{eq-v-1}, we obtain
\begin{equation} \label{dis76}
\begin{split} 
\frac {\partial } {\partial t}  \Bigl( \frac { | D^\beta v |} { |v|} \Bigr) = & |v|^{-1} \frac {\partial } {\partial t}  | D^\beta v | -  |v|^{-2} | D^\beta v | \frac {\partial } {\partial t}   |v| \\  = & - ( |v|\,  |D^\beta v |)^{-1} \Im ( D^\beta \Delta v D^\beta  \overline{v}  )  -  |v|^{-2} | D^\beta v | L  \\ & + (1-bt)^{-\frac{4-N\alpha}{2}}  ( |v|\,  |D^\beta v |)^{-1} \Im ( \lambda D^\beta ( |v|^\alpha v) D^\beta  \overline{v} ) \\ & -   (1-bt)^{-\frac{4-N\alpha}{2}}  |v|^{\alpha -1} | D^\beta v | \Im \lambda  .
\end{split} 
\end{equation} 
We first use a cancellation. Applying formula~\eqref{fLbzPrec1}, we see that  
\begin{equation*} 
\begin{split} 
( |v|\, & |D^\beta v |)^{-1}  \Im ( \lambda D^\beta ( |v|^\alpha v) D^\beta  \overline{v} ) -  |v|^{\alpha -1}  |D^\beta v| \Im \lambda  \\ = &  
( |v|\,  |D^\beta v |)^{-1} \Im  \Bigl( \lambda \Bigl[  D^\beta ( |v|^\alpha ) v + \lambda  c_{\gamma_{1},\gamma_{2}}  \sum _{\substack{\gamma_{1}+\gamma_{2}=\beta \\ |\gamma_{1} |,  |\gamma _2|  \ge 1} }   D^{\gamma_{1}} ( |v|^{\alpha} ) D^{\gamma_{2}}v \Bigr] D^\beta  \overline{v} \Bigr) ,
\end{split}  
\end{equation*} 
so that 
\begin{equation*} 
( |v|\,  |D^\beta v |)^{-1}  \Im ( \lambda D^\beta ( |v|^\alpha v) D^\beta  \overline{v} ) -  |v|^{\alpha -1} | D^\beta v | \Im \lambda   \le \CSTs ,
\end{equation*} 
where
\begin{equation} \label{fDfnG} 
\CSTs =   |\lambda | \,  | D^\beta ( |v|^\alpha ) |  + C_5  \sum _{\substack{\gamma_{1}+\gamma_{2}=\beta \\ |\gamma_{1} |,  |\gamma _2| \ge 1} }  \frac {  | D^{\gamma_{1}} ( |v|^{\alpha} )| \,  | D^{\gamma_{2}}v |} {  |v| } .
\end{equation} 
Therefore, \eqref{dis76} yields
\begin{equation*} 
\frac {\partial } {\partial t}  \Bigl( \frac { | D^\beta v |} { |v|} \Bigr) \le  \frac { | D^\beta \Delta v |} {  |v| } +  \frac { | D^\beta v | \,  |\Delta v |} {  |v|^2 }  +  (1-bt)  ^{-\frac{4-N\alpha}{2}}  \CSTs .
\end{equation*} 
Integrating in $t$ and taking the sup in $x$, we deduce that
\begin{equation} \label{fStpF1:b11} 
\begin{split} 
\Bigl\| \frac { D^\beta v } { |v|} \Bigr\|  _{ L^\infty  } \le  &\CSTnw + \int _0^t  \Bigl(   \Bigl\|  \frac { D^\beta \Delta v } {  |v| }  \Bigr\| _{ L^\infty  } +  \Bigl\|  \frac { D^\beta  v } {  |v| }  \Bigr\| _{ L^\infty  }  \Bigl\|  \frac {   \Delta v } {  |v| }  \Bigr\| _{ L^\infty  }  \Bigr) \\ &  + \int _0^t (1-bs)  ^{-\frac{4-N\alpha}{2}}  \| \CSTs \| _{ L^\infty  } .
\end{split} 
\end{equation} 
We first apply~\eqref{bd-T*}, \eqref{fDfnPhi6} and the property $2\sigma <1$ (by the fourth inequality in~\eqref{fSupplN2:b1}),  and we obtain
\begin{equation} \label{fEstLin1} 
\begin{split} 
\int _0^t  \Bigl\|  \frac { D^\beta  v } {  |v| }  \Bigr\| _{ L^\infty  }  \Bigl\|  \frac {   \Delta v } {  |v| }  \Bigr\| _{ L^\infty  } & \le C_5 \int _0^t (1-bs)^{- \sigma  _{  |\beta | } -2\sigma } \\ &
\le  C_5 (1-bt)^{- \sigma  _{  |\beta | } } \int _0^t (1-bs)^{- 2\sigma } \\ & \le \frac {C_5} {b}  (1-bt)^{- \sigma  _{  |\beta | } } .
\end{split} 
\end{equation} 
Similarly, if $ |\beta |\le 2m$, then (using also~\eqref{de-sigma})
\begin{equation*} 
\int _0^t   \Bigl\|  \frac { D^\beta \Delta v } {  |v| }  \Bigr\| _{ L^\infty  } \le C_5 \int _0^t (1-bs)^{- \sigma  _{  |\beta | } -2\sigma }
\le \frac {C_5} {b}  (1-bt)^{- \sigma  _{  |\beta | } } .
\end{equation*} 
For the case of $2m+1\leq|\beta|\leq2m+2$, we use Sobolev's embedding~\eqref{fEstSob1}: 
\begin{equation*} 
 \Bigl\|  \frac { D^\beta \Delta v } {  |v| }  \Bigr\| _{ L^\infty  }
 =  \Bigl\|  \frac {\langle \cdot \rangle ^n D^\beta \Delta v } { \langle \cdot \rangle ^n |v| }  \Bigr\| _{ L^\infty  }\le 
 C_5 (\inf \langle x \rangle ^n  |v|)^{-1} \sum_{  |\beta |+ 2 \le  |\gamma |\le  |\beta | +  k }  \| \langle \cdot \rangle^{n} D^{ \gamma} v \| _{ L^2  }.
\end{equation*} 
It follows from~\eqref{bd-T*}, \eqref{fDfnPhi2}, \eqref{fDfnPhi3} and~\eqref{de-sigma} that 
\begin{equation*} 
 \Bigl\|  \frac { D^\beta \Delta v } {  |v| }  \Bigr\| _{ L^\infty  } \le C_5 (1-bs) ^{- \frac {2-N\alpha } {2\alpha } - ( |\beta | + k +3  ) \sigma } .
\end{equation*} 
Since 
\begin{equation} \label{dis86} 
\frac{2-N\alpha}{2\alpha} + (k+2)\sigma < 1 
\end{equation} 
by the last inequality in~\eqref{fSupplN2:b1}, we deduce that 
\begin{equation*} 
\int _0^t    \Bigl\|  \frac { D^\beta \Delta v } {  |v| }  \Bigr\| _{ L^\infty  } \le  \frac {C_5} {b}  (1-bt)^{- (  |\beta | +1 ) \sigma  }  \le  \frac {C_5} {b}  (1-bt)^{- \sigma  _{  |\beta | } } .
\end{equation*} 
Thus we see that for every $ |\beta |\le 2m+2$,
\begin{equation}  \label{fEstLin2} 
\int _0^t    \Bigl\|  \frac { D^\beta \Delta v } {  |v| }  \Bigr\| _{ L^\infty  } \le  \frac {C_5} {b}  (1-bt)^{- \sigma  _{  |\beta | } } .
\end{equation} 
We now estimate the last term in~\eqref{fStpF1:b11}.  
We first assume $ |\beta |\le 2m$, and we note that
\begin{equation*} 
 |\CSTs |\le  C_5  \sum _{\substack{\gamma_{1}+\gamma_{2}=\beta \\ |\gamma_{1} | \ge 1} }  \frac {  | D^{\gamma_{1}} ( |v|^{\alpha} )| \,  | D^{\gamma_{2}}v |} {  |v| } .
\end{equation*} 
Since $ 1\le |\gamma _1|\le 2m$, we may apply~\eqref{Decay-v} with $\beta $ replaced by $\gamma _1$. 
Using also~\eqref{bd-T*} and~\eqref{fDfnPhi6}, we see that 
\begin{equation*} 
\begin{split} 
 \Bigl\| \frac {   D^{\gamma_{1}} ( |v|^{\alpha} )   D^{\gamma_{2}}v } {  |v| }  \Bigr\| _{ L^\infty  } & \le  \| D^{\gamma_{1}} ( |v|^{\alpha} ) \| _{ L^\infty  }  \Bigl\| \frac {   D^{\gamma_{2}}v } {  |v| }  \Bigr\| _{ L^\infty  } \\ &
\le C_5    \min \{  1, (  bG (t) ) ^{1-\frac{2\sigma}{2-N\alpha}} \} (1-bt)^{- (   \vert \gamma _1 \vert -1 )  \sigma -  |\gamma _2| \sigma  }  \\ &
\le C_5    \min \{  1, (  bG (t) ) ^{1-\frac{2\sigma}{2-N\alpha}} \} (1-bt)^{-  \sigma _{  |\beta | } + \sigma  } .
\end{split} 
\end{equation*} 
Applying~\eqref{feLemInt1:1} of Lemma~\ref{eLemInt1} with $\nu = \sigma  _{  |\beta |  }$, we deduce that
\begin{equation} \label{fStpF1:b12} 
\int _0^t (1-bs)  ^{-\frac{4-N\alpha}{2}}  \| \CSTs \| _{ L^\infty  }  \le  \frac {C_5} {b ^{\frac {2\sigma } {2- N\alpha }}} (1-bt)^{ -\sigma  _{  |\beta | }  }.
\end{equation} 
We now consider the case $2m+1\le  |\beta |\le 2m+2$.
Using~\eqref{fEstDerv1} with $\rho = \alpha $, 
\begin{equation*} 
 | D^\beta (  |v|^\alpha  ) |   \le  \alpha |v|^\alpha \frac {| D^{\beta  } v |} { |v|} +   C |v|^\alpha  \sup  _{ \substack {0\le  |\beta _\ell |\le  |\beta |-1 \\ \sum \beta _\ell = \beta  } } \prod _{ \ell =1 }^{  |\beta |}    \frac {| D^{\beta _\ell } v |} { |v|}  .
\end{equation*} 
Moreover, using~\eqref{fEstDerv1} with $\rho = \alpha $ and $\beta $ replaced by $\gamma _1$,
\begin{equation*} 
 | D^{\gamma _1} (  |v|^\alpha  ) |   \le   C |v|^\alpha  \sup  _{ \substack {0\le  |\beta _\ell |\le  |\gamma _1 | \\ \sum \beta _\ell = \gamma _1  } } \prod _{ \ell =1 }^{  |\beta |}    \frac {| D^{\beta _\ell } v |} { |v|}  .
\end{equation*} 
The two above inequalities imply
\begin{equation*} 
 | \CSTs |    \le   |\lambda |  \alpha |v|^\alpha \frac {| D^{\beta  } v |} { |v|}  + C_5  |v|^\alpha  \sum  _{ \substack {0\le  |\beta _\ell |\le  |\beta |-1 \\ \sum \beta _\ell = \beta  } } \prod _{ \ell =1 }^{  |\beta |}    \frac {| D^{\beta _\ell } v |} { |v|} ,
\end{equation*} 
so that
\begin{equation} \label{fStpF1} 
\int _0^t (1-bs)  ^{-\frac{4-N\alpha}{2}}  \| \CSTs \| _{ L^\infty  } \le   \widetilde{I}_1 +  \widetilde{I}_2 ,
\end{equation} 
where
\begin{equation} \label{fStpF1:b1} 
 \widetilde{I}_1 =    |\lambda |  \alpha  \int _0^t   (1-bs)  ^{-\frac{4-N\alpha}{2}}   |v|^\alpha   \frac {| D^{\beta  } v |} { |v|}  
\end{equation} 
and
\begin{equation} \label{fStpF1:b2} 
 \widetilde{I}_2 = C_5  \int _0^t 
  (1-bs)  ^{-\frac{4-N\alpha}{2}}   |v|^\alpha    \sum  _{ \substack {0\le  |\beta _\ell |\le  |\beta |-1 \\ \sum \beta _\ell = \beta  } } \prod _{ \ell =1 }^{  |\beta |}    \frac {| D^{\beta _\ell } v |} { |v|} .
\end{equation} 
We estimate $ \widetilde{I}_1 $ given by~\eqref{fStpF1:b1}. Using~\eqref{Decay-v0:b1}, \eqref{bd-T*}  and~\eqref{fDfnPhi6},
\begin{equation*} 
   |\lambda |  \alpha     |v|^\alpha   \frac {| D^{\beta  } v |} { |v|} \le 5  \widetilde{K}   |\lambda |  \Bigl(  \frac{1 + \alpha  |\Im \lambda|}{  |\Im \lambda|} \Bigr)  \min \{ 2 K^\alpha  ,  bG (t) \} (1-bs)^{- \sigma  _{  |\beta | }} .
\end{equation*} 
Applying~\eqref{feLemInt1:1:bz1} of Lemma~\ref{eLemInt1} with $\nu = \sigma  _{  |\beta |  }$ and $M= 2K^\alpha $, and then~\eqref{de-sigma:b1}, we deduce as in~\eqref{dis53:b2}  that
\begin{equation} \label{dis53:b12}
 \widetilde{I}_1    \le  \Bigl(   \frac {C_5 } { b^{  \frac{2\sigma}{2-N\alpha}}  }   
 +  \widetilde{K}  \Bigr) (1-bt)^{-  \sigma  _{  |\beta | } } . 
\end{equation}
Finally, we consider $ \widetilde{I}_2 $ given by~\eqref{fStpF1:b2}. 
We estimate $ |v|^\alpha $ by~\eqref{Decay-v0} and the terms $  \frac {| D^{\beta _\ell } v |} { |v|} $ by~\eqref{bd-T*} and~\eqref{fDfnPhi6}, and we obtain
\begin{equation*} 
 \Bigl\|   |v|^\alpha \prod _{ \ell =1 }^{  |\beta |}    \frac {| D^{\beta _\ell } v |} { |v|}   \Bigr\| _{ L^\infty  } \le 
 C_5  \min \{ 1 ,  bG (t) \}  (1-bt) ^{- \sum  _{ \ell =1}^{  |\beta | }\sigma  _{  |\beta _\ell| } }. 
\end{equation*} 
Since $\sum  _{ \ell =1}^{  |\beta | }  |\beta _\ell | =  |\beta |$, $ |\beta _\ell |\le  |\beta |-1$ and $2m+1\le  |\beta |\le 2m+2$, it follows from~\eqref{de-sigma} that 
\begin{equation*} 
\sum  _{ \ell =1}^{  |\beta | }\sigma  _{  |\beta _\ell| } \le \sigma  _{  |\beta | } - \sigma .
\end{equation*} 
Moreover, $   \min \{ 1 ,  bG (t) \} \le  \min \{ 1 , (bG(t))^{1 - \frac {2\sigma } {2-N\alpha }} \}$, so that
\begin{equation*} 
 \Bigl\|   |v|^\alpha \prod _{ \ell =1 }^{  |\beta |}    \frac {| D^{\beta _\ell } v |} { |v|}   \Bigr\| _{ L^\infty  } \le 
 C_5 \min \{ 1 , (bG(t))^{1 - \frac {2\sigma } {2-N\alpha }} \} (1-bt) ^{-  \sigma  _{  |\beta | }+ \sigma }. 
\end{equation*} 
Therefore, we deduce from estimate~\eqref{feLemInt1:1} of Lemma~\ref{eLemInt1} with $\nu = \sigma  _{  |\beta |  }$ that
\begin{equation} \label{dis53:b14}
 \widetilde{I}_2    \le   \frac {C_5 } { b^{  \frac{2\sigma}{2-N\alpha}}  }    (1-bt)^{-  \sigma  _{  |\beta | } } . 
\end{equation}
Estimates~\eqref{fStpF1:b12}, \eqref{fStpF1}, \eqref{dis53:b12} and~\eqref{dis53:b14} show that 
\begin{equation} \label{dis53:b15}
 \int _0^t (1-bs)  ^{-\frac{4-N\alpha}{2}}  \| \CSTs \| _{ L^\infty  } \le  \Bigl(   \frac {C_5 } { b^{  \frac{2\sigma}{2-N\alpha}}  }   
 +  \widetilde{K}  \Bigr) (1-bt)^{-  \sigma  _{  |\beta | } } ,
\end{equation}
for all $ |\beta |\le 2m+2$. 
Now we deduce from~\eqref{fStpF1:b11}, \eqref{fEstLin1}, \eqref{fEstLin2} and~\eqref{dis53:b15} that
 if $b_1\ge b_0$ is sufficiently large so that
\begin{equation} \label{dis53:b16}
C_5  \Bigl( \frac1b_1 +\frac{1}{ b_1 ^\frac{2\sigma}{2-N\alpha}}  \Bigr) \le  \CSTnw ,
\end{equation} 
then
\begin{equation} \label{6}
\Phi_{4,T^{\ast}}\leq2 \CSTnw ,
\end{equation}
provided $b\ge b_1$. 

\Step6 Conclusion.\quad 
We choose $b_1\ge b_0$ sufficiently large so that~\eqref{fMMre4}, \eqref{fMMre4:b2}, \eqref{fMMre4:b3}, \eqref{fCondC4} and~\eqref{dis53:b16} are satisfied.
It follows from~\eqref{fMMre5}, \eqref{fMMre5:b2}, \eqref{fMMre5:b3}, \eqref{4} and~\eqref{6} that if $b\ge b_1$, then 
\begin{equation*}
\Psi_{T^{\ast}}\le 3 \CSTnw .
\end{equation*}
This contradicts~\eqref{bd-T*} and completes the proof. 
\end{proof}

\section{Asymptotics for~\eqref{NLS-1} and proof of Theorem~$\ref{T1}$} \label{sAsymp} 

In this section, we establish time-asymptotic estimates for the solutions of~\eqref{NLS-1} given by Proposition~\ref{BD-GWP}, which we use to prove Theorem~\ref{T1}. The asymptotic estimates are given by the following result.

\begin{prop} \label{PropAsym}
Let $\lambda \in \C$ satisfy $\Im\lambda<0$. 
Assume~\eqref{Conditionalpha}, \eqref{fCondonk}--\eqref{fCondonm}, \eqref{fSupplN2:b1}  and let $\Spa $ be defined by~\eqref{n27}.
Let $v_{0}\in \Spa $ satisfy~\eqref{IV:b1} for some $K\ge 1$, let $b> b_1$ where $b_1$ is given by Proposition~$\ref{BD-GWP}$, and let $v\in
C([0,\frac{1}{b}),\Spa )$ be the solution of~\eqref{NLS-1}
given by Proposition~$\ref{BD-GWP}$. It follows that there exist $C, \delta >0$, $\eta > \frac {N} {2}$ and $f_{0},\omega_{0}\in
L^{\infty} (\R^N ) \cap C(\R^N ) $ with $\Vert f_{0}\Vert_{L^{\infty}}\leq\frac{1}{2}$ and $\langle
\cdot \rangle^{ n }\omega_{0}\in L^{\infty}( \R^N  )$ such that
\begin{equation} \label{asymptotic}
 \Vert \langle \cdot \rangle^{\eta} [  v(t,\cdot )-\omega_{0}(\cdot )\psi
(t,\cdot )e^{-i\theta(t,\cdot )} ]   \Vert _{L^{\infty}}\leq C(1-bt) ^{ - \frac{2-N\alpha}{2\alpha } -\delta } ,
\end{equation}
for all $0\leq t<\frac{1}{b}$, where
\begin{equation} \label{def-psi}
\psi(t,x)=\Bigl(  \frac{1+f_{0}(x)}{1+f_{0}(x)+\frac{2\alpha|\Im\lambda
|}{b  (  2-N\alpha  )  }|v_{0}(x)|^{\alpha}[(1-bt)^{-\frac{2-N\alpha }{2}}-1]} \Bigr)  ^{\frac{1}{\alpha}} 
\end{equation}
and
\begin{equation}
\theta(t,x)=\frac{\Re\lambda}{\Im\lambda}\log\psi(t,x). \label{def-theta}
\end{equation}
Moreover,
\begin{equation}
 \vert \omega_{0} \vert ^{\alpha}=\frac{ \vert v_{0} \vert ^{\alpha}}{1+f_{0}}. \label{omega0}
\end{equation}
In addition,
\begin{equation}  \label{converge}
(1-bt)^{-\frac{2-N\alpha}{2}}\Vert v\Vert_{L^{\infty}}^{\alpha} 
\goto_{t\uparrow \frac{1}{b}} \frac{b (2-N\alpha) } { 2 \alpha |\Im\lambda|}
\end{equation}
and there exist two constants $0<a\leq A<\infty$ such that
\begin{equation} \label{estimateL2}
a(1-bt)^{ (  \frac{1}{\alpha}-\frac{N}{2} )   (  1-\frac{N} 
{2n} )  }\leq \Vert v (  t )   \Vert _{L^{2}}\leq
A(1-bt)^{ (  \frac{1}{\alpha}-\frac{N}{2} )   (  1-\frac{N}
{2n} )  },
\end{equation}
for all $0 \leq t<\frac{1}{b}$. 
\end{prop}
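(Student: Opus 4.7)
My plan is to establish~\eqref{asymptotic}--\eqref{estimateL2} in four stages: extract $f_0$ and define $|\omega_0|$ from the explicit formula for $|v|$; extract the phase $\varphi_0$ from the evolution of $\arg v$; combine both into~\eqref{asymptotic}; and read off~\eqref{converge} and~\eqref{estimateL2}. For Stage~1, in the integral formula~\eqref{eq-v-4:b2} the integrand $\alpha|v_0|^\alpha|v|^{-\alpha-1}|L|$ is dominated by $C(1-bs)^{-(2-N\alpha)/2-2\sigma}$ (by~\eqref{fEstintf} with $K_1=5\CSTnw$), and this exponent exceeds $-1$ by~\eqref{fSupplN3:b1}, so the integral converges absolutely on $[0,1/b)$. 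Setting $f_0(x):=\lim_{t\uparrow 1/b}f(t,x)$ yields $\|f(t,\cdot)-f_0\|_{L^\infty}\le C(1-bt)^{N\alpha/2-2\sigma}/b$, with exponent positive by~\eqref{fSupplN2:b1}, and $\|f_0\|_{L^\infty}\le 1/4$ follows from~\eqref{fEstFu} in the limit. Defining $|\omega_0|^\alpha:=|v_0|^\alpha/(1+f_0)$ produces a continuous function with $\langle\cdot\rangle^n\omega_0\in L^\infty$ and $\tfrac{3}{2}|v_0|^\alpha\le|\omega_0|^\alpha\le 2|v_0|^\alpha$.

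For Stage~2, I divide~\eqref{NLS-1} by $v$ (nonzero by~\eqref{inf:b1}) and take imaginary parts to obtain
\[
\partial_t\arg v=\frac{\Re(\bar v\,\Delta v)}{|v|^2}-(\Re\lambda)(1-bt)^{-(4-N\alpha)/2}|v|^\alpha.
\]
The last term is rewritten using~\eqref{eq-v-1} as $(\Im\lambda)^{-1}\partial_t\bigl[\log|v|-\int_0^t L/|v|\,ds\bigr]$. From~\eqref{eq-v-4:b3} and~\eqref{def-psi} one derives
\[
\log|v|-\log\psi=\log|\omega_0|+\alpha^{-1}\log\frac{1+f_0+c_x/(bG)}{1+f+c_x/(bG)},\qquad c_x:=\frac{2\alpha|\Im\lambda|\,|v_0|^\alpha}{2-N\alpha},
\]
whose second summand vanishes as $t\uparrow 1/b$ at rate $O(\|f-f_0\|_{L^\infty})$. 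Combining all of these shows that $\arg v(t,\cdot)+\theta(t,\cdot)$ converges uniformly to a continuous $\varphi_0$, with rate bounded by the maximum of $\|f-f_0\|_{L^\infty}$ and $\int_t^{1/b}(1-bs)^{-2\sigma}\,ds\le C(1-bt)^{1-2\sigma}/b$; here $|\Re(\bar v\Delta v)|/|v|^2$ and $|L|/|v|$ are bounded by $C(1-bs)^{-2\sigma}$ using $\Phi_4\le 5\CSTnw$ and~\eqref{de-sigma}. Set $\omega_0:=|\omega_0|e^{i\varphi_0}$.

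For Stage~3, I split $|v-\omega_0\psi e^{-i\theta}|\le||v|-|\omega_0|\psi|+|\omega_0|\psi\,|\arg v+\theta-\varphi_0|$. By the formulas of Stage~1, $(|v|/|\omega_0|\psi)^\alpha=(1+f_0+c_x/bG)/(1+f+c_x/bG)$ is within $2\|f-f_0\|_{L^\infty}$ of $1$, so $||v|-|\omega_0|\psi|\le C|\omega_0|\psi\,\|f-f_0\|_{L^\infty}$, and the phase factor is controlled by Stage~2. Since $|\omega_0|\psi\le C\min(|v_0|,(bG)^{1/\alpha})$ (using $|\omega_0|\le 2^{1/\alpha}|v_0|$ together with~\eqref{def-psi}), a two-regime interpolation in $x$ yields $\langle x\rangle^\eta|\omega_0|\psi\le C(bG)^{(n-\eta)/(n\alpha)}$ for any $N/2<\eta\le n$. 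Since $bG\asymp(1-bt)^{(2-N\alpha)/2}$ and $n$ is large by~\eqref{fCondonn:1}, one can choose $\eta$ slightly above $N/2$ and $\delta>0$ so that all exponents balance, giving~\eqref{asymptotic}.

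Finally,~\eqref{converge} follows from $|v|^\alpha=|v_0|^\alpha/H$ with $H$ as in~\eqref{eq-v-4:b3}: as $bG\to 0$, $H\sim c_x/(bG)$, so $|v|^\alpha\to (2-N\alpha)bG/(2\alpha|\Im\lambda|)$ uniformly where $|v_0|$ is bounded below, hence $(1-bt)^{-(2-N\alpha)/2}\|v\|_{L^\infty}^\alpha\to b(2-N\alpha)/(2\alpha|\Im\lambda|)$. For~\eqref{estimateL2}, I split $\|v\|_{L^2}^2$ across $\{|v_0|^\alpha\ge bG\}$ and its complement; on the first region $|v|^\alpha\asymp bG$, on the second $|v|^\alpha\asymp|v_0|^\alpha$; using $|v_0|\asymp\langle x\rangle^{-n}$ (from~\eqref{IV:b1} and~\eqref{boundbelow}) and $n>N/2$, both pieces are of order $(bG)^{(2n-N)/(n\alpha)}$, matching $(1-bt)^{2(1/\alpha-N/2)(1-N/(2n))}$. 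The main obstacle will be Stage~3, where the $t$-decay from both modulus and phase corrections must be balanced against the spatial weight $\langle x\rangle^\eta$ despite the differing $x$-profiles of $\psi$; the largeness of $n$ required by~\eqref{fCondonn:1} is essential here to make $\delta>0$.
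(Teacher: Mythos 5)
Your proposal is correct and arrives at the same exponent condition as the paper, but by a genuinely different route: instead of the paper's single complex-amplitude decomposition $v=\omega\psi e^{-i\theta}$ with a PDE for $\omega$, you treat modulus and phase separately. Concretely, the paper derives $i\partial_t\omega=\tfrac{e^{i\theta}}{\psi}\bigl(-\Delta v+\lambda(1-bt)^{-\frac{4-N\alpha}{2}}(|v|^\alpha-\widetilde v^\alpha)v\bigr)$ and bounds $\|\langle\cdot\rangle^\eta\partial_t\omega\|_{L^\infty}$ via the factorization $\|\langle\cdot\rangle^n v/\psi\|_{L^\infty}\cdot\bigl[\|\Delta v/(\langle\cdot\rangle^{n-\eta}|v|)\|_{L^\infty}+|\lambda|(1-bt)^{-\frac{4-N\alpha}{2}}\|(|v|^\alpha-\widetilde v^\alpha)/\langle\cdot\rangle^{n-\eta}\|_{L^\infty}\bigr]$, which forces the paper's constraint $\eta\ge n(1-\alpha)$ (so that $\rho=(n-\eta)/(n\alpha)\le 1$ when it applies its difference estimate) and leads to the choice $\eta\in\{n/5,\,n(1-\alpha)\}$. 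You instead extract $|\omega_0|$ from the explicit $|v|^\alpha=|v_0|^\alpha/H$, extract a phase $\varphi_0$ from an ODE for $\arg v+\theta$ in which the time-derivative of $\log\bigl[(1+f_0+c_x/bG)/(1+f+c_x/bG)\bigr]$ is isolated as a total derivative, and then push the spatial weight onto $\langle x\rangle^\eta|\omega_0|\psi\lesssim(bG)^{(n-\eta)/(n\alpha)}$ by a two-regime interpolation. This avoids the auxiliary $|v|^\alpha-\widetilde v^\alpha$ estimate, requires only $N/2<\eta<n$, and the exponent inequality you obtain, $\tfrac{(2-N\alpha)\eta}{2n\alpha}<\tfrac{N\alpha}{2}-2\sigma$, is algebraically identical to the paper's~\eqref{fCndeta4}; with $\eta$ slightly above $N/2$ it holds because $n>20/\alpha^2$ and $\sigma<N\alpha/10$. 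Both approaches buy the same $\delta>0$, and the proofs of~\eqref{converge} and~\eqref{estimateL2} coincide.

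Two small remarks. First, to differentiate $\arg v$ in $t$ and speak of its uniform limit you need a continuous branch of the argument on $[0,\tfrac1b)\times\R^N$; since $v$ is continuous, nonvanishing, and $\R^N$ is simply connected, such a lift exists and is determined by the lift of $\arg v_0$, but this should be said explicitly (the paper's formulation via $\omega=ve^{i\theta}/\psi$ sidesteps it automatically). Second, the comparison $\tfrac32|v_0|^\alpha\le|\omega_0|^\alpha\le 2|v_0|^\alpha$ you quote from Theorem~\ref{T1} is inconsistent with $\|f_0\|_{L^\infty}\le\tfrac14$: from $|\omega_0|^\alpha=|v_0|^\alpha/(1+f_0)$ one gets $\tfrac45|v_0|^\alpha\le|\omega_0|^\alpha\le\tfrac43|v_0|^\alpha$; this is clearly a typo in the theorem statement (as is the sign of the exponent in~\eqref{asymptotic}, which should be $+\tfrac{2-N\alpha}{2\alpha}+\delta$ as in the paper's~\eqref{diff-omega}), so nothing is wrong in your argument, but the quoted numerical comparison should be corrected.
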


\begin{proof}
Using the a priori estimates of Proposition~\ref{BD-GWP}, we follow the proof of~\cite[Proposition 4.1]{CH}. 

We recall that~\eqref{UB} and~\eqref{fEstFu} hold. 
Also, since $b_1\ge b_0$ of Proposition~\ref{lem-Decay-v}, $v$ satisfies~\eqref{eq-v-4} and~\eqref{Decay-v0}. In particular
\begin{equation} \label{Decay-v0:b11} 
 |v (t, x)| \le C \min \{ \langle x\rangle ^{-n},  (1-bt)^{\frac{2-N\alpha}{2\alpha }} \}, 
\end{equation} 
for all $x\in \R^N $ and $\frac {1} {2b}\le t< \frac {1} {b}$. 
We let $f \in C((0, \frac {1} {b}) \times \R^N )$ be defined by~\eqref{eq-v-4:b2}. It follows from~\eqref{UB}, \eqref{fEstFu}, \eqref{fEstintf} with $K_1$ replaced by $ 5\widetilde{K} $,  and~\eqref{fdonaod}  that  
\begin{equation*} 
 \| f(t, \cdot )- f(s, \cdot ) \| _{ L^\infty (\R^N ) } \le C \int _t^s (1-b\tau )^{- \frac {2-N\alpha } {2} - 2 \sigma }d \tau  \le C (1-bt)^{1 - \frac {2-N\alpha } {2} - 2 \sigma },
\end{equation*} 
for all $0\le t<s< \frac {1} {b}$. Thus we see that $f(t, \cdot )$ is convergent in $L^\infty(\R^N)$ as $t\uparrow\frac1b$. Then $f$ can be extended to a continuous function $[0,\frac1b] \to  L^\infty(\R^N)$. We set
\begin{equation}  \label{def-f0}
f_0(x)=f  \Bigl( \frac1b,x  \Bigr) =-\alpha\int^\frac1b_0|v_0(x)|^\alpha|v(s,x)|^{-\alpha-1}L(s,x)\,ds,
\end{equation}
so that $f_{0} \in L^{\infty} (\R^N ) \cap C(\R^N ) $ and
\begin{equation} \label{diff-f-f0}
\Vert f(t)-f_{0}\Vert_{L^{\infty}} \le C (1-bt)^{1-\frac{2-N\alpha} {2}-2\sigma} .
\end{equation}
Moreover, using~\eqref{fEstFu},
\begin{equation} \label{bd-f-2} 
\Vert f(t)\Vert_{L^{\infty}}\leq\frac{1}{4},
\end{equation} 
for all $0\leq t\leq\frac{1}{b}$, and so
\begin{equation} \label{bd-f0}
\Vert f_{0}\Vert_{L^{\infty}}\leq\frac{1}{4} .
\end{equation} 
In particular, $1+ f_0 >0$, so that by~\eqref{def-psi},
\begin{equation} \label{fEstPsiinfty} 
0 < \psi (t,x) \le 1
\end{equation}  
for all $0\le t< \frac {1} {b}$ and $x\in \R^N $.
Moreover, it follows from~\eqref{bd-f-2} and~\eqref{bd-f0} that
\begin{equation}\label{fUEHtl2} 
\max   \Bigl\{ \frac {1} {\Jdtx } , \frac {1} { \widetilde{\Jdtx }  } \Bigr\} \le 2 ,
\end{equation}
for all $0\leq t<\frac1b$, where $\Jdtx $ is defined by~\eqref{eq-v-4:b3} and
\begin{equation*} 
 \widetilde{\Jdtx } (t, x)  =  1+f_0(x)+\frac{ 2\alpha| \Im \lambda|}{b ( 2-N\alpha ) }|v_0(x)|^\alpha[(1-bt)^{-\frac{2-N\alpha}{2}}-1]  .
\end{equation*} 
Note also that by~\eqref{bd-f-2}, \eqref{bd-f0} and~\eqref{IV:b1},
\begin{equation*} 
\begin{split} 
\langle x\rangle ^{n\alpha } \min \{ \widetilde{\Jdtx },   \Jdtx  \}& \ge \frac{ 2\alpha| \Im \lambda|}{b ( 2-N\alpha ) } ( \langle x\rangle ^{n } |v_0(x)|)^\alpha[(1-bt)^{-\frac{2-N\alpha}{2}}-1] \\ &
\ge \frac{ 2\alpha| \Im \lambda|}{b ( 2-N\alpha ) K^\alpha  } [(1-bt)^{-\frac{2-N\alpha}{2}}-1] ,
\end{split} 
\end{equation*} 
so that 
\begin{equation} \label{fUEHtl1} 
\max   \Bigl\{ \frac {1} {\Jdtx } , \frac {1} { \widetilde{\Jdtx }  } \Bigr\} \le C (1-bt)^{ \frac {2-N\alpha } {2} } \langle x\rangle ^{ n\alpha },
\end{equation} 
for $\frac {1} {2b} \le t < \frac {1} {b}$. 
Moreover, it follows from~\eqref{diff-f-f0} that
\begin{equation} \label{fEstHtoH} 
 \Bigl\| \frac { \widetilde{\Jdtx } } {\Jdtx } -1 \Bigr\| _{ L^\infty  } =  \Bigl\| \frac { f(t) - f_0 } {\Jdtx }  \Bigr\| _{ L^\infty  } \le C .
\end{equation} 
We now set
\begin{equation}\label{def-tilde-v}
\widetilde{v}(t,x)=\Big( \frac{|v_0(x)|^\alpha} {  \widetilde{\Jdtx } (t,x)  }\Big)^\frac{1}{\alpha}.
\end{equation}
It follows from~\eqref{eq-v-4}, \eqref{def-tilde-v}, \eqref{diff-f-f0}, \eqref{fUEHtl2} and~\eqref{fUEHtl1}  that
\begin{equation*}
\| \, |v(t, \cdot )|^\alpha-\widetilde v(t, \cdot )^\alpha \|_{L^\infty} = 
 \Bigl\| ( \langle \cdot \rangle ^n  | v_0 |)^\alpha \frac { f(t)- f_0} { \langle \cdot \rangle ^{n\alpha } \Jdtx  \widetilde{\Jdtx } } \Bigr\| _{ L^\infty  }  \le  C (1-bt)^{1 -2\sigma } ,
\end{equation*} 
and
\begin{equation*}
\begin{split} 
\Bigl\| \frac { |v(t, \cdot )|^\alpha-\widetilde v(t, \cdot )^\alpha} {\langle \cdot \rangle ^{n\alpha }} \Bigr\|_{L^\infty} & = 
 \Bigl\| ( \langle \cdot \rangle ^n  | v_0 |)^\alpha \frac { f(t)- f_0} { \langle \cdot \rangle ^{2n\alpha } \Jdtx  \widetilde{\Jdtx } } \Bigr\| _{ L^\infty  } \\ & \le  C (1-bt)^{1 -2\sigma + \frac {2- N\alpha } {2}} ,
\end{split} 
\end{equation*} 
for all $\frac {2} {b}\leq t<\frac{1}{b}$. 
Therefore, given any $0\le \rho \le 1$, we have
\begin{equation} \label{diff-v-tilde-v}
\Bigl\| \frac { |v(t, \cdot )|^\alpha-\widetilde v(t, \cdot )^\alpha} {\langle \cdot \rangle ^{ \rho n\alpha }} \Bigr\|_{L^\infty}  \le  C (1-bt)^{1 -2\sigma  + \rho \frac {2- N\alpha } {2} } ,
\end{equation} 
for all $\frac {2} {b}\leq t<\frac{1}{b}$. 
Next, we introduce the decomposition
\begin{equation}\label{decom-v}
v(t,x)=\omega(t,x)\psi(t,x)e^{ - i\theta(t,x)},
\end{equation}
where $\psi(t,x)$ and $\theta(t,x)$ are defined by~\eqref{def-psi} and~\eqref{def-theta} respectively. Differentiating~\eqref{decom-v} with respect to $t$, we obtain
\begin{equation} \label{fEqomega} 
i \partial _t \omega = i \frac{e^{i\theta}} {\psi} \partial _t v - i \omega \frac { \partial _t \psi } {\psi} - \omega \partial _t \theta .
\end{equation}
On the other hand, it follows easily from~\eqref{def-psi},~\eqref{def-theta} and~\eqref{def-tilde-v} that
\begin{align*}
\frac{ \partial _t \psi }{\psi}&=\Im \lambda(1-bt)^{-\frac{4-N\alpha}{2}}\widetilde v^\alpha,\\
 \partial _t \theta &= \Re \lambda(1-bt)^{-\frac{4-N\alpha}{2}}\widetilde v^\alpha.
\end{align*}
Therefore, we deduce from~\eqref{fEqomega}, \eqref{decom-v} and~\eqref{NLS-1} that
\begin{equation} \label{eq-omega}
\begin{split}
i \partial _t \omega &= i \frac{e^{  i\theta}}{\psi} \partial _t v -\omega(1-bt)^{-\frac{4-N\alpha}{2}}\lambda\widetilde v^\alpha\\
         &=\frac{e^{  i\theta}}{\psi}( i \partial _t v-\lambda(1-bt)^{-\frac{4-N\alpha}{2}}\widetilde v^\alpha v)\\
         &=\frac{e^{  i \theta}}{\psi}( - \Delta v+\lambda(1-bt)^{-\frac{4-N\alpha}{2}}(|v|^\alpha-\widetilde v^\alpha)v) . 
\end{split}
\end{equation}
We claim that there exists $\eta>0$ such that 
\begin{gather} 
0\le \eta \le n, \label{fCndeta1} \\
\eta \ge n(1-\alpha ) , \label{fCndeta2} \\
\eta > \frac {N} {2}, \label{fCndeta3} \\
2\sigma + \frac{2-N\alpha}{2} \Bigl[ \frac {\eta + n\alpha } {n\alpha } \Bigr] < 1 . \label{fCndeta4} 
\end{gather} 
Indeed, if $\alpha \ge \frac {4} {5}$, we let $\eta = \frac {n} {5}$. Conditions~\eqref{fCndeta1} and~\eqref{fCndeta2} are obviously satisfied, and~\eqref{fCndeta3} follows from the first inequality in~\eqref{fCondonn:2}. 
Moreover, \eqref{fCndeta4} is equivalent to 
\begin{equation*} 
2\sigma < \frac {N} {2}  \Bigl( \alpha + \frac {1} {5} \Bigr) - \frac {1} {5\alpha }.
\end{equation*} 
Since $\alpha \ge \frac {4} {5}$ and $N\ge 1$, the right-hand side of the above inequality is $\ge \frac {1} {4}$. Since $\sigma < \frac {1} {8}$ by the fourth inequality in~\eqref{fSupplN2:b1} and~\eqref{fCondonk} we see that~\eqref{fCndeta4} is satisfied. 
If $\alpha < \frac {4} {5}$, we let $\eta = n (1-\alpha )$, so that~\eqref{fCndeta1} and~\eqref{fCndeta2} hold. 
Moreover, $\eta \ge \frac {n} {5} $ so that~\eqref{fCndeta3} follows from the first inequality in~\eqref{fCondonn:2}.  
Furthermore, \eqref{fCndeta4} is equivalent to $\sigma < \frac {(N+2) \alpha -2} {4\alpha }$, which is a consequence of the last inequality in~\eqref{fSupplN2:b1}. 

We fix $\eta >0$ satisfying~\eqref{fCndeta1}--\eqref{fCndeta4} and we let
\begin{equation}  \label{fCndeta5} 
\delta = 1 - \frac {2- N\alpha } {2\alpha } - 2\sigma - \frac{2-N\alpha}{2} \Bigl[ \frac {\eta - n(1-\alpha )} {n\alpha } \Bigr] >0 .
\end{equation} 
It follows from~\eqref{eq-omega}  that
\begin{equation}  \label{fCndeta6} 
\begin{split} 
\Vert \langle \cdot \rangle^{ \eta } & \partial _t \omega \Vert_{L^{\infty}} \\ & \leq \Bigl\Vert
\langle \cdot \rangle^{n} \frac{v} {\psi} \Bigr\Vert _{L^{\infty}} \Bigl(  \Bigl\Vert
\frac {\Delta v} { \langle \cdot \rangle ^{ n - \eta } \vert v \vert } \Bigr \Vert _{L^{\infty}} 
+|\lambda|(1-bt)^{-\frac{4-N\alpha}{2}}  \Bigl\Vert \frac { |v|^{\alpha}-\widetilde{v} 
^{\alpha}} { \langle \cdot \rangle ^{ n - \eta }} \Bigr\Vert_{L^{\infty}} \Bigr) . 
\end{split} 
\end{equation}
Applying~\eqref{IV:b1} \eqref{eq-v-4} \eqref{def-psi}, \eqref{bd-f0}  and~\eqref{fEstHtoH}, we obtain
\begin{equation}  \label{fCndeta7}
 \Bigl\| \langle \cdot \rangle ^n \frac {v} {\psi} \Bigr\| _{ L^\infty  }=  \Bigl\| \langle \cdot \rangle ^n v_0  \Bigl( \frac {1} {1+ f_0} \frac { \widetilde{\Jdtx } } {\Jdtx } \Bigr)^{\frac {1} {\alpha }} \Bigr\| _{ L^\infty  } \le C.
\end{equation} 
Moreover, by \eqref{UB} and~\eqref{fDfnPhi6}, 
\begin{equation}  \label{fCndeta8}
 \Bigl\Vert
\frac {\Delta v} { \langle \cdot \rangle ^{ n - \eta } \vert v \vert } \Bigr \Vert _{L^{\infty}}  \le  \Bigl\Vert \frac{\Delta v}{ \vert v \vert }\Bigr\Vert _{L^{\infty} }\leq 5  \widetilde{K}  (1-bt)^{-2\sigma}. 
\end{equation}
Furthermore, since $0\le n - \eta \le n \alpha $, it follows from~\eqref{diff-v-tilde-v}  with $\rho = \frac {n- \eta } {n\alpha }$ that
\begin{equation}  \label{fCndeta9}
 \Bigl\Vert \frac { |v|^{\alpha}-\widetilde{v} 
^{\alpha}} { \langle \cdot \rangle ^{ n - \eta }} \Bigr\Vert_{L^{\infty}}\le C (1-bt)^{1 -2\sigma  + \frac { ( n- \eta) ( 2- N\alpha )} {2 n\alpha} } .
\end{equation} 
We deduce from~\eqref{fCndeta6}--\eqref{fCndeta9} that
\begin{equation} \label{est-omega-t}
\begin{split} 
\Vert\langle \cdot \rangle^{\eta } \partial _t \omega\Vert_{L^{\infty}} & \leq C(1-bt)^{  -2\sigma  }
+ C(1-bt)^{  -2\sigma  -\frac{2-N\alpha}{2} [\frac {\eta - n(1-\alpha )} {n\alpha } ] } \\
& \leq C(1-bt)^{  -2\sigma  -\frac{2-N\alpha}{2} [ \frac {\eta - n(1-\alpha )} {n\alpha } ] }. 
\end{split} 
\end{equation}
Applying~\eqref{fCndeta4} and~\eqref{fCndeta5}, we obtain
\begin{equation*} 
\Vert\langle \cdot \rangle^{\eta }(\omega(t)-\omega(s))\Vert_{L^{\infty}}  \leq C(1-bt)^{ 1 -2\sigma  -\frac{2-N\alpha}{2} [ \frac {\eta - n(1-\alpha )} {n\alpha } ] }  = C(1-bt)^{ \frac {2-N\alpha } {2\alpha } + \delta } , 
\end{equation*} 
for all $\frac {1} {2b}\leq t<s<\frac{1}{b}$. We conclude that there exists $\omega_{0} \in  L^{\infty} (\R^N ) \cap C(\R^N ) $ such that $\langle
x\rangle^{\eta }\omega_{0}\in L^{\infty}( \R^N ) $ and
\begin{equation} \label{diff-omega}
\Vert\langle \cdot \rangle^{ \eta }(\omega(t)-\omega_{0})\Vert_{L^{\infty}}\leq C(1-bt)^{ \frac {2-N\alpha } {2\alpha } + \delta } , 
\end{equation}
for all $\frac {1} {2b}\leq t<\frac{1}{b}$. 
Moreover, \eqref{decom-v} and~\eqref{fCndeta7} imply that $ \| \langle \cdot \rangle ^n \omega (t) \| _{ L^\infty  } \le C$, so that $\langle \cdot \rangle ^n \omega _0\in L^\infty  (\R^N ) $.
Applying~\eqref{decom-v}, \eqref{fEstPsiinfty}  and~\eqref{diff-omega}, we obtain
\begin{equation*} 
 \| \langle \cdot \rangle ^\eta ( v(t)-\omega_0 \psi e^{-i\theta } ) \| _{ L^\infty  } =  \| \langle \cdot \rangle ^\eta \psi  ( \omega (t)-\omega_0 ) \| _{ L^\infty  } \le C(1-bt)^{ \frac {2-N\alpha } {2\alpha } + \delta },
\end{equation*} 
for all $\frac {1} {2b}\leq t<\frac{1}{b}$. 
This proves the asymptotic estimate~\eqref{asymptotic} for $\frac {1} {2b}\leq t<\frac{1}{b}$.
For $0\le t\le \frac {1} {2b}$ it is clearly satisfied by possibly choosing $C$ larger.

Now, we prove~\eqref{omega0}.
It follows from \eqref{diff-v-tilde-v} and \eqref{decom-v} that 
\begin{equation}
\Vert|\omega(t)|^{\alpha} \psi(t) ^{\alpha}-\widetilde{v}
(t)^{\alpha}\Vert_{L^{\infty}}\leq C(1-bt)^{1-2\sigma} .
\end{equation}
Using~\eqref{def-psi} and~\eqref{def-tilde-v}, we deduce that
\begin{equation*} 
\Bigl\Vert  \frac { |\omega (t)|^\alpha (1+f_0) -  |v_0|^\alpha } {  \widetilde{\Jdtx } }  \Bigr\Vert_{L^{\infty}}\leq C(1-bt)^{1-2\sigma} .
\end{equation*} 
Since by \eqref{IV:b1} and \eqref{bd-f0}
\begin{equation*} 
 \widetilde{\Jdtx }  \leq C(1-bt)^{-\frac{2-N\alpha}{2}},
\end{equation*} 
we conclude that
\begin{equation}
\Vert  |\omega (t)|^\alpha (1+f_0) -  |v_0|^\alpha   \Vert_{L^{\infty}}\leq C(1-bt)^{1 -\frac{2-N\alpha}{2} -2\sigma}
\goto _{ t \to \frac {1} {b} } 0.
\end{equation}
Applying~\eqref{diff-omega}, we obtain property~\eqref{omega0}.

Next we prove~\eqref{converge}. 
Set
\begin{equation} \label{fDfnZ1} 
Z (t, x)=  (1-bt)^{-\frac{2-N\alpha}{2}}   \widetilde{ v} (t,x) ^\alpha  .
\end{equation} 
It follows from~\eqref{def-tilde-v}   that
\begin{equation*}
Z (t, x) =\frac{|v_0 (x) |^\alpha(1-bt)^{-\frac{2-N\alpha}{2}}} {1+f_0 (x) +\frac{ 2 \alpha| \Im \lambda|}{b ( 2-N\alpha ) }|v_0 (x) |^\alpha[(1-bt)^{-\frac{2-N\alpha}{2}}-1]}.
\end{equation*}
Since $1+ f_0\ge 0$ by~\eqref{bd-f0}, we obtain
\begin{equation*}
Z(t,x)  \leq \frac{b (2-N\alpha ) }{ 2 \alpha| \Im \lambda|}\frac{1}{1-(1-bt)^\frac{2-N\alpha}{2}} 
\end{equation*}
so that
\begin{equation*} 
\limsup  _{ t\uparrow \frac {1} {b} }  \| Z(t) \| _{ L^\infty  }  \le \frac{b (2-N\alpha ) }{ 2 \alpha| \Im \lambda|} . 
\end{equation*} 
Moreover, $1+ f_0 \le 2$, so that
\begin{equation*}
Z (t,0)\geq\frac{|v_0(0)|^\alpha(1-bt)^{-\frac{2-N\alpha}{2}}}{2+\frac{ 2 \alpha| \Im \lambda|}{b (2-N\alpha ) }|v_0(0)|^\alpha[(1-bt)^{-\frac{2-N\alpha}{2}}-1]}
\end{equation*}
Since $|v_0(0)|>0$ by~\eqref{IV:b1}, we deduce that
\begin{equation*}
\liminf_{t\uparrow\frac1b}   \| Z(t) \| _{ L^\infty  }   \ge \frac{b (2-N\alpha ) }{ 2 \alpha| \Re \lambda|} .
\end{equation*}
Thus we see that 
\begin{equation} \label{fDfnZ2} 
 \| Z(t) \| _{ L^\infty  }   \goto  _{ t\to \frac {1} {b} }\frac{b (2-N\alpha ) }{ 2 \alpha| \Re \lambda|} .
\end{equation} 
Applying~\eqref{fDfnZ1} and~\eqref{diff-v-tilde-v}, we deduce that 
\begin{equation*} 
\begin{split} 
 \| (1-bt) ^{- \frac {2-N\alpha } {2}}  |v(t ) |^\alpha -Z(t )  \| _{ L^\infty  }& = (1-bt) ^{- \frac {2-N\alpha } {2}}  \| \,  |v(t ) |^\alpha -  \widetilde{v}(t)^\alpha   \| _{ L^\infty  }
\\ & \le C(1-bt) ^{1 - (2-N\alpha )  -2\sigma} \goto _{ t\to \frac {1} {b} }0,
\end{split} 
\end{equation*} 
so that~\eqref{converge} follows from~\eqref{fDfnZ2}.

Finally we prove~\eqref{estimateL2}, and we let $\frac {1} {2b}\le t< \frac {1} {b}$.
It follows from~\eqref{Decay-v0:b11} that
\begin{equation*} 
\begin{split} 
\int  _{ \R^N  } |v(t)|^2 & = \int  _{ \langle x\rangle  > (1-bt)^{- \frac {2-N\alpha } {2n\alpha }} } |v(t)|^2 +  \int  _{\langle x\rangle  < (1-bt)^{- \frac {2-N\alpha } {2n\alpha }} } |v(t)|^2 \\ 
& \le C \int  _{ \langle x\rangle > (1-bt)^{- \frac {2-N\alpha } {2n\alpha }} }  \langle x\rangle ^{- 2n}  + C  \int  _{ \langle x\rangle  < (1-bt)^{- \frac {2-N\alpha } {2n\alpha }} }(1-bt)^{\frac {2-N\alpha } {\alpha }}
\\ & \le C (1-bt)^{ \frac {2-N\alpha } {\alpha } (1 - \frac {N} {2n})},
\end{split} 
\end{equation*} 
which proves the upper estimate in~\eqref{estimateL2}.  Next, using~\eqref{bd-f0}, we see that
\begin{equation*} 
 \widetilde{\Jdtx }^{\frac {1} {\alpha }} \le
\begin{cases} 
C, & \langle x\rangle  > (1-bt)^{- \frac {2-N\alpha } {2n\alpha }}, \\
C \langle x\rangle ^{-n },  &  \langle x\rangle  < (1-bt)^{- \frac {2-N\alpha } {2n\alpha }} .
\end{cases} 
\end{equation*} 
Applying~\eqref{eq-v-4} and $ |v_0 (x)|\ge K^{-1}  \langle x\rangle ^{-n}$ (by~\eqref{IV:b1}), we deduce that
\begin{equation*} 
 |v(t, x)| \ge
\begin{cases} 
c  \langle x\rangle ^{-n}, & \langle x\rangle  > (1-bt)^{- \frac {2-N\alpha } {2n\alpha }}, \\
c,  &  \langle x\rangle  < (1-bt)^{- \frac {2-N\alpha } {2n\alpha }} ,
\end{cases} 
\end{equation*} 
for some $c>0$. 
Therefore,
\begin{equation*} 
\int  _{ \R^N  } |v(t)|^2 \ge c \int  _{ \langle x\rangle > (1-bt)^{- \frac {2-N\alpha } {2n\alpha }} }  \langle x\rangle ^{- 2n}  + c  \int  _{ \langle x\rangle  < (1-bt)^{- \frac {2-N\alpha } {2n\alpha }} }(1-bt)^{\frac {2-N\alpha } {\alpha }},
\end{equation*} 
from which the lower estimate in~\eqref{estimateL2} follows. 
This completes the proof.
\end{proof}

We are now in a position to prove Theorem~\ref{T1}.

\begin{proof}[Proof of Theorem~$\ref{T1}$]
Let $v_{0}  \in \Spa $ satisfy~\eqref{boundbelow}, and set
\begin{equation*}
K= \Vert v_0 \Vert_ \Spa + \Bigl(  \inf _{x\in \R^N  }\langle x\rangle^{n}|v_0 (x)| \Bigr)  ^{-1} .
\end{equation*}
Let $b\ge b_1$, where $b_1$ is given by Proposition \ref{BD-GWP} for this value of $K$, and let  $v\in
C([0,\frac{1}{b}),\Spa )$ be the solution of~\eqref{NLS-1} given by Proposition~\ref{BD-GWP}.
Let $f_0, \omega _0, \theta ,\psi $ be given by Proposition~\ref{PropAsym}. Set 
\begin{equation} \label{dis84} 
u(t,x)=(1+bt)^{-\frac N2}e^{i\frac{b|x|^2}{4(1+bt)}}v\Big( \frac t{1+bt},\frac x{1+bt}\Big),\  t\geq0,\ x\in\R^N .
\end{equation}
It follows that  $u\in C([0,\infty), H^1(\R^N))$ is the solution of~\eqref{NLS-0}  with the initial value $u_0(x)=e^{i\frac{b|x|^2}{4}}v_0(x)$. 
Since $\eta >\frac{N}{2}$, the estimate~\eqref{asymptotic} implies
\begin{equation*} 
 \Vert v(t,x)-\omega_{0}(x)\psi(t,x)e^{-i\theta(t,x)} \Vert
_{L^{\infty}\cap L^{2}}\leq C(1-bt)^{\frac{2-N\alpha}{2\alpha}+\delta} .
\end{equation*} 
Then, \eqref{dis81} follows from~\eqref{asymptotic}. By using~\eqref{dis84},
we see that~\eqref{dis82} and~\eqref{dis83} are consequences of~\eqref{converge} and~\eqref{estimateL2}, respectively. This completes the proof.
\end{proof}

\end{document}